\title[Nonlocal equations with measure data]{Nonlocal equations with measure data}
\author[Kuusi]{Tuomo Kuusi}
\address{Tuomo Kuusi\\Aalto University
Institute of Mathematics
\\ P.O. Box 11100
FI-00076 Aalto,
Finland}
\email{tuomo.kuusi@aalto.fi}
\author[Mingione]{Giuseppe Mingione}
\address{Giuseppe Mingione\\Dipartimento di Matematica e Informatica, Universit\`a di Parma\\
Parco Area delle Scienze 53/a, Campus, 43100 Parma, Italy}
\email{giuseppe.mingione@unipr.it.}
\author[Sire]{Yannick Sire}
\address{Yannick Sire\\Universit\'e Aix-Marseille, I2M\\
Technopole de Chateaux-Gombert, Marseille, France}
\email{sire@cmi.univ-mrs.fr}
\newtheorem{theorem}{Theorem}[section]
\newtheorem{lemma}{Lemma}[section]
\newtheorem{cor}{Corollary}[section]
\newtheorem{corollary}{Corollary}[section]
\theoremstyle{definition}
\newtheorem{definition}{Definition}
\newtheorem{remark}{Remark}[section]
\numberwithin{equation}{section}
\def\eqn#1$$#2$${\begin{equation}\label#1#2\end{equation}}
\def\charfn_#1{{\raise1.2pt\hbox{$\chi_{\kern-1pt\lower3pt\hbox{{$\scriptstyle#1$}}}$}}}
\newcommand{\rif}[1]{(\ref{#1})}
\newcommand{\trif}[1] {\textnormal{\rif{#1}}}
\newcommand{\eps}{\varepsilon}
\newcommand{\tail}{{\rm Tail}}
 \DeclareMathOperator*{\osc}{osc}
\def\diam{\operatorname{diam}}
\def\dist{\operatorname{dist}}
\newcommand{\divo}{\textnormal{div}}
 \DeclareMathOperator*{\esssup}{ess\,sup}
\DeclareMathOperator*{\essinf}{ess\,inf}
\def\en{\mathbb N}
\def\er{\mathbb R}
\newcommand{\ern}{\mathbb{R}^n}
\def\loc{\operatorname{loc}}
\newcommand{\lp}{\mathcal{L}_{\Phi} }
\newcommand{\ratio}{\nu, L}
\def\mean#1{\mathchoice%
          {\mathop{\kern 0.2em\vrule width 0.6em height 0.69678ex depth -0.58065ex
                  \kern -0.8em \intop}\nolimits_{\kern -0.4em#1}}%
          {\mathop{\kern 0.1em\vrule width 0.5em height 0.69678ex depth -0.60387ex
                  \kern -0.6em \intop}\nolimits_{#1}}%
          {\mathop{\kern 0.1em\vrule width 0.5em height 0.69678ex
              depth -0.60387ex
                  \kern -0.6em \intop}\nolimits_{#1}}%
          {\mathop{\kern 0.1em\vrule width 0.5em height 0.69678ex depth -0.60387ex
                  \kern -0.6em \intop}\nolimits_{#1}}}
\def\vintslides_#1{\mathchoice%
          {\mathop{\kern 0.1em\vrule width 0.5em height 0.697ex depth -0.581ex
                  \kern -0.6em \intop}\nolimits_{\kern -0.4em#1}}%
          {\mathop{\kern 0.1em\vrule width 0.3em height 0.697ex depth -0.604ex
                  \kern -0.4em \intop}\nolimits_{#1}}%
          {\mathop{\kern 0.1em\vrule width 0.3em height 0.697ex depth -0.604ex
                  \kern -0.4em \intop}\nolimits_{#1}}%
          {\mathop{\kern 0.1em\vrule width 0.3em height 0.697ex depth -0.604ex
                  \kern -0.4em \intop}\nolimits_{#1}}}
\newcommand{\aveint}[2]{\mathchoice%
          {\mathop{\kern 0.2em\vrule width 0.6em height 0.69678ex depth -0.58065ex
                  \kern -0.8em \intop}\nolimits_{\kern -0.45em#1}^{#2}}%
          {\mathop{\kern 0.1em\vrule width 0.5em height 0.69678ex depth -0.60387ex
                  \kern -0.6em \intop}\nolimits_{#1}^{#2}}%
          {\mathop{\kern 0.1em\vrule width 0.5em height 0.69678ex depth -0.60387ex
                  \kern -0.6em \intop}\nolimits_{#1}^{#2}}%
          {\mathop{\kern 0.1em\vrule width 0.5em height 0.69678ex depth -0.60387ex
                  \kern -0.6em \intop}\nolimits_{#1}^{#2}}}
\newtoks\by
\newtoks\paper
\newtoks\book
\newtoks\jour
\newtoks\yr
\newtoks\pages
\newtoks\vol
\newtoks\publ
\def\et{ \& }
\def\name[#1, #2]{#1 #2}
\def\ota{{\hbox{\bf ???}}}
\def\cLear{\by=\ota\paper=\ota\book=\ota\jour=\ota\yr=\ota
\pages=\ota\vol=\ota\publ=\ota}
\def\endpaper{\the\by, \textit{\the\paper},
{\the\jour} \textbf{\the\vol} (\the\yr), \the\pages.\cLear}
\def\endbook{\the\by, \textit{\the\book},
\the\publ, \the\yr.\cLear}
\def\endpap{\the\by, \textit{\the\paper}, \the\jour.\cLear}
\def\endproc{\the\by, \textit{\the\paper}, \the\book, \the\publ,
\the\yr, \the\pages.\cLear}
\begin{document}
\begin{abstract}
We develop an existence, regularity and potential theory for nonlinear integrodifferential equations involving measure data. The nonlocal elliptic operators considered are possibly degenerate and cover the case of the fractional $p$-Laplacean operator with measurable coefficients. We introduce a natural function class where we solve the Dirichlet problem, and prove basic and optimal nonlinear Wolff potential estimates for solutions. These are the exact analogs of the results valid in the case of local quasilinear degenerate equations established by Boccardo \& Gallou\"et \cite{BG1, BG2} and Kilpel\"ainen \& Mal\'y \cite{KM1, KM2}. As a consequence, we establish a number of results which can be considered as basic building blocks for a nonlocal, nonlinear potential theory: fine properties of solutions, Calder\'on-Zygmund estimates, continuity and boundedness criteria are established via Wolff potentials. 
A main tool is the introduction of a global excess functional that allows to prove a nonlocal analog of the classical theory due to Campanato \cite{camp}. Our results cover the case of linear nonlocal equations with measurable coefficients, and the one of the fractional Laplacean, and are new already in such cases.
\end{abstract}
\maketitle
\tableofcontents 

\section{Introduction and results}
Nonlocal operators have attracted increasing attention over the last years. Apart from their theoretical interest, and the new mathematical phenomena they allow to observe, they intervene in a quantity of applications and models since they allow to catch more efficiently certain peculiar aspects of the modelled situations. For instance, we mention their use in quasi-geostrophic dynamics \cite{CVa}, nonlocal diffusion and modified porous medium equations \cite{BV, V, V2}, dislocation problems \cite{CG}, phase transition models \cite{AB, CSm}, image reconstruction problems \cite{GO}. For this reason it is particularly important to study situations when such nonlocal operators are involved in equations featuring singular or very irregular data, as for instance those modelling source terms which are concentrated at points. This leads to study nonlocal equations having measures as data. In this paper we establish basic facts about existence and regularity properties of solutions to possibly nonlinear, nonlocal equations having a right hand side which is a measure; the equations considered here are allowed to be degenerate. More precisely we shall consider nonlocal elliptic equations, and related Dirichlet problems, that read as   
\eqn{base}
$$-\mathcal{L}_{\Phi} u= \mu \qquad \mbox{in}\  \Omega \subset \er^n\;,$$
where $\Omega$ is a bounded open subset for $n\geq 2$, $-\lp$ is a nonlocal operator defined by
\eqn{duality}
$$
\langle -\mathcal{L}_{\Phi} u, \varphi\rangle  := \int_{\ern}\int_{\ern} \Phi(u(x){-}u(y))(\varphi(x)-\varphi(y)) K(x,y) \, dx \, dy\,,
$$
for every smooth function $\varphi$ with compact support. In \rif{base} it is assumed that $\mu$ belongs to $\mathcal M(\ern)$, 
that is the space of Borel measures with finite total mass on $\ern$. The function $\Phi : \er \mapsto \er$ is assumed to be continuous, satisfying $\Phi(0) = 0$ together with the monotonicity property
\eqn{monophi}
$$
\Lambda^{-1}|t|^{p} \leq  \Phi(t) t \leq \Lambda|t|^{p}\,, \qquad \forall\  t \in \er \,.
$$
Finally, the kernel $K\colon \ern \times \ern \to \er$ is assumed to be measurable, and satisfying the following ellipticity/coercivity properties: 
\eqn{thekernel}
$$ \frac{1}{\Lambda |x{-}y|^{n+sp}} \leq K(x,y)  \leq \frac{\Lambda}{|x{-}y|^{n+sp}} \qquad \forall\  x,y \in \ern, \ x \not= y\,$$
where $\Lambda \geq 1$ and
\eqn{basicbounds}
$$
s \in (0,1)\,, \qquad p > 2- \frac{s}{n}=:p_* \,.
$$
The lower bound $p>p_*$ comes from the fact that we are considering elliptic problems involving a measure, and for $s=1$ it recovers the necessary lower bound required in the classical local case to get Sobolev solutions; see also Section \ref{brief} below. Assumptions \rif{monophi}-\rif{basicbounds} make indeed $-\lp$ an elliptic operator. Moreover, \rif{thekernel} tells us the natural domain of definition of $-\lp$ is the fractional Sobolev space $W^{s,p}(\er^n)$ in the sense that this is the largest space to which $\varphi$ has to belong to in order to make the duality in \rif{duality} finite when $u \in W^{s,p}(\ern)$. We recall that the definition of $W^{s,p}(\mathcal O)$ for $s \in (0,1)$ and $p \geq 1$ and any open subset $\mathcal O \subset \ern$ is 
\[
W^{s,p}(\mathcal O) := \left\{ v \in L^{p}(\mathcal O) \; : \;[v]_{s,p;\mathcal O} < \infty \right\} 
\]
with
$$
[v]_{s,p;\mathcal O}^p:=\int_{\mathcal O} \int_{\mathcal O} \frac{|v(x){-}v(y)|^p}{|x{-}y|^{n+sp}} \, dx \, dy
\quad \mbox{and} \quad \|v\|_{W^{s,p}(\mathcal O)}:= \|v\|_{L^{p}(\mathcal O)}+[v]_{s,p;\mathcal O}\;. 
$$
Notice that, upon taking the special case $\Phi(t)= |t|^{p-2}t$, we recover the fractional $p$-Laplacean operator with measurable coefficients (see \cite{BCF, DKP1, DKP2}). On the other hand, in the case $\Phi(t)= t$ we cover the special case of linear fractional operators with measurable coefficients $\mathcal L$ defined by
\eqn{linearn}
$$
\langle -\mathcal{L} u, \varphi\rangle  := \int_{\ern}\int_{\ern} (u(x){-}u(y))(\varphi(x)-\varphi(y)) K(x,y) \, dx \, dy\,. 
$$ 
For these equations see also \cite{BCI, CS, CS2}.  
Finally, when $K(x,y)= |x{-}y|^{-(n+ps)}$ in \rif{linearn} we recover the case of the classical fractional Laplacean $\mathcal L = (-\triangle)^s$. The forthcoming results are new already in such cases.  
In connection to the equation \rif{base} we shall consider the related Dirichlet problems, that is those of the form 
\begin{equation} \label{eq}
\left\{
\begin{array}{rl}
 - \mathcal{L}_{\Phi} u =  \mu & \mbox{in } \; \Omega    \\
  u =  g &  \mbox{in } \, \ern \setminus \Omega \,,
\end{array}
\right.
\end{equation}
where in general the ``boundary datum" $g \in W^{s,p}(\er^n)$ must be prescribed on the whole complement of $\Omega$. In this case, and when $\Phi(t)= |t|^{p-2}t$ and $\mu=0$, we are essentially considering the Euler-Lagrange equation of the functional 
$$
v \mapsto \int_{\ern}\int_{\ern} |v(x){-}v(y)|^p K(x,y) \, dx \, dy
$$
minimised in the class of functions such that $v =  g$ outside $\Omega$. See Remark \ref{solvability} below. In this paper we shall introduce a natural function class allowing for solvability of the Dirichlet problem \rif{eq}; in particular, we shall introduce a suitable notion of solution to equations of the type \rif{base}. Such solutions, called SOLA (Solutions Obtained as Limits of Approximations) and constructed via an approximation procedure with problems involving more regular data, do not in general lie in the natural energy space associated to the operator $-\lp$, that is $W^{s,p}$, but exhibit a lower degree of integrability and differentiability. See Definition \ref{soladef} below. This is in perfect analogy with what happens in the case of classical, local measure data problems - see next Section \ref{brief}. For such reasons these solutions should be considered as the analog of the {\em very weak solutions} usually considered in the classical case. The degree of regularity of solutions will in anyway coincide with that of the fundamental solution of the fractional Laplacean equation
\eqn{fraclap}
$$
(- \triangle)^s u =\mu
$$
with $\mu\equiv \delta$ being the Dirac measure charging the origin. In this case $u(x) \approx |x|^{2s-n}$. Note that it will be indeed part of the game to determine a reasonable way of prescribing the boundary datum $g$ from \rif{eq} when defining the notion of approximate solution we are going to work with. We recall that \rif{base} must be primarily intended in the distributional sense, so that the first requirement that SOLA are bound to satisfy is being solutions in the sense of distributions. We will expand on this at the beginning of Section \ref{SOLAsec} below. 

After having proved the existence and basic regularity results for solutions we then pass to examine their pointwise and fine behaviour. In a next step we proceed to the main results of the paper, that is we produce local potential estimates for solutions; see Section \ref{pot sec} below. These allow to describe the pointwise behaviour of solutions - finiteness, possibility of defining the precise representative, continuity etc - via natural and optimal Wolff potentials. These results are  the sharp counterpart of those available in the classical potential theory, and in the more recently developed nonlinear one \cite{HKM, KM1, KM2}. We note that a few interesting existence and regularity results for the specific equation \rif{fraclap} in the case $s>1/2$ have been obtained in \cite{KPU} and, in a different setting in \cite{KR}. More recent work in \cite{CV} deals again with fractional equations involving measures, this time for any $s>0$, while the operator is given by the fractional Laplacean, and the analysis is carried out by means of fundamental solutions. A notion of renormalised solution for semilinear equations is proposed in \cite{ABB}. 

In order to better describe how far the results presented in this paper extend to the fractional setting those available in the classical case, we shall give in the next section a short review of some of the key point results of the classical local theory. For general notation we refer to Section \ref{notationsec} below, that we recommend to at this point. 

\subsection{Brief review of the classical local theory}\label{brief} An existence and regularity theory for general quasilinear equations involving measures has been established by Boccardo \& Gall\"ouet in a series of papers \cite{BG1, BG2, BG3}. The authors deal with Dirichlet problems of the type 
\eqn{Dir1}
$$
\left\{
    \begin{array}{cc}
    -\divo \ a(x,Du)=\mu & \qquad \mbox{in $\Omega$}\\[2 pt]
        u= 0&\qquad \mbox{on $\partial\Omega$}\,,
\end{array}\right.
$$
where the vector field $a(x,Du)$ has $p$-growth and coercivity with respect to the gradient, and exhibits a measurable dependence on $x$. The main model case here is given by the $p$-Laplacean operator with measurable coefficients $c(x)$
\eqn{caso}
$$
a(x,Du) = c(x)|Du|^{p-2}Du\,, \qquad 0 < 1/\Lambda \leq c(x) \leq \Lambda\,.
$$
For the sake of simplicity, from now on all the results for \rif{Dir1} will be stated in the case \rif{caso} holds. Under the optimal assumption $p > 2-1/n$, Boccardo \& Gall\"ouet introduce the notion of SOLA, that are defined as distributional solutions which have been obtained as limits (a.e. and in $L^{p-1}$) of 
a sequence of $W^{1,p}$-solutions $\{u_j\}$ of problems 
$$
\left\{
    \begin{array}{cc}
    -\divo \ a(x,Du_j)=\mu_j & \qquad \mbox{in $\Omega$}\\[2 pt]
        u_j= 0&\qquad \mbox{on $\partial\Omega$}\;,
\end{array}\right.
$$
where the sequence $\{\mu_j\}\subset C^{\infty}(\Omega)$ converges to $\mu$ weakly in the sense of measures. 
The final outcome is the existence of a distributional solution $u$ to the original problem satisfying
\eqn{bg} 
$$
u \in W^{1, q}(\Omega)\,, \qquad \forall \ q < \frac{n(p-1)}{n-1}\,.
$$
Solutions are not in general energy solutions, as shown by the nonlinear Green's function $G_P(x) \approx |x|^{-(p-1)/(n-p)}$, which solves $-\divo (|Du|^{p-2}Du)=\delta$ for $n\not=p$, and that does not belong to $W^{1,p}_{\loc}$. It instead precisely satisfy \rif{bg}.  For this reason such solutions are often called {\em very weak solutions}; for more information about degenerate problems with measure data we refer the reader to \cite{milan}. We remark that different notions of solutions have been proposed \cite{elenco, BGO, DMOP, Lind}, also in order to prove unique solvability (which is still an open problem). Such definitions are all equivalent in the case the measure $\mu$ is nonnegative, as eventually shown in \cite{KKT}. Following the classical approach settled in the linear potential theory, pointwise and fine properties of solutions can be studied by using suitable nonlinear potentials, as first shown in the pioneering work of Kilpel\"ainen \& Mal\'y \cite{KM1, KM2}. More precisely, with $\mu$ being a Borel measure, we define the (truncated) Wolff potential ${\bf W}^{\mu}_{\beta, p}$ of the measure $\mu$ as 
\eqn{wolff} 
$$
{\bf W}^{\mu}_{\beta, p}(x_0,r):= \int_0^r \left(\frac{|\mu|(B_\varrho(x_0))}{\varrho^{n-\beta p}}\right)^{1/(p-1)}\, \frac{d\varrho}{\varrho}\,, \qquad  \beta >0
$$
whenever $x \in \er^n$ and $0< r \leq \infty$. Summarizing the various contributions given in \cite{KM1, KM2, TW, KK, KMb} we have that the pointwise estimate
\eqn{pot1}
$$
|u(x_0)| \leq  c{\bf W}_{1,p}^\mu(x_0,r)+ c\left(\mean{B_r(x_0)}|u|^{p-1}\, d x\right)^{1/(p-1)}
$$
holds whenever $B_r(x_0)\subset \Omega$ and the right hand side is finite; the constant $c$ depends only on $n, p,\ratio$. Moreover, when the right hand side measure and the solution itself is nonnegative, the following lower bound holds too:
$$
c^{-1}{\bf W}_{1,p}^\mu(x_0,r) \leq u(x_0) \,.
$$   
Wolff potentials, originally introduced and studied by Havin \& Maz'ya in \cite{MH}, naturally come into the play when studying the local pointwise behaviour of solutions and play in nonlinear potential theory the same fundamental role of that Riesz potentials play in the classical linear one. Indeed, it can be proved that if
 $$
 \lim_{r\to 0}{\bf W}_{1,p}^{\mu}(x, r)=0 \ \ \mbox{locally uniformly in $\Omega$ w.r.t.~$x$}\,,
 $$
 then $u$ is continuous in $\Omega$. Moreover, any point $x$ for which the Wolff potential ${\bf W}_{1,p}^\mu(x,R)$ is finite for some $R >0$ is a Lebesgue point of $u$. The validity of Wiener criterion for non-linear equations relies on the local behaviour of suitable Wolff potentials, as shown in \cite{M, KM2} while estimate \rif{pot1} allows to deduce important existence and solvability theorems for non-homogeneous equations, as shown in \cite{JV, PV1, PV2}.  For more on potential estimates and more results we refer to \cite{KMl, KMb}.

\subsection{The basic existence theorem and SOLA}\label{SOLAsec} As mentioned above, in order to formulate the existence result for the Dirichlet problem \rif{eq} we first need to specify the suitable notion of solution, and to do it in a way which is such that basic properties of solutions in the classical local case are preserved. 
SOLA are therefore defined following the approximation scheme 
settled in the local case, with an additional approximation for the boundary values, which is here allowed to be different than zero. Before giving the definition we need to specify a few nonlocal objects that will be crucial in the subsequent analysis. Since problems of the type \rif{eq} are defined on the whole $\er^n$, the analysis of solutions necessarily involve a quantification of the long-range interactions of the function $u$. A suitable quantity to control the interaction is the following {\em Tail}, which is initially defined whenever $v \in L^{p-1}_{\loc}(\ern)$:
\begin{equation}\label{tail}
{\rm Tail}(v; x_0, r):= \left[ r^{sp} \int_{\ern\setminus B_r(x_0)}\frac{ |v(x)|^{p-1}}{ |x{-}x_0|^{n+sp}}\,d x \right]^{1/(p-1)}\,.
\end{equation} 
This quantity is already known to play an important problems in the regularity theory of energy solutions \cite{DKP1, DKP2}. 
In the following, when the point $x_0$ will be clear from the context, we shall also denote 
${\rm Tail}(v; r)\equiv {\rm Tail}(v; x_0, r)$. 
We accordingly define 
\begin{equation} \label{tail L}
L^{p-1}_{sp}(\ern) := \big\{ v \in L_{\rm loc}^{p-1}(\ern) \; : \;   {\rm Tail}(v;z,r)< \infty \; \;  \forall \, z \in \ern\,, \forall\, r \in (0,\infty)\big\}\,.
\end{equation}
It is easy to see that $W^{s,p} (\ern)\subset L^{p-1}_{sp}(\ern)$. 
Both the Tail and the space defined in \rif{tail L} play an important role when considering both classical (energy) distributional solutions 
to \rif{base} and those that we eventually define as SOLA. The definition is then as follows: 
\begin{definition}\label{weak}
Let $\mu \in (W^{s,p}(\Omega))'$ and $g \in W^{s,p} (\ern)$.  A weak (energy) solution to the problem
\begin{equation} \label{eqvw}
\left\{
\begin{array}{rl}
 - \mathcal{L}_{\Phi} u =  \mu & \mbox{in } \; \Omega    \\
  u =  g &  \mbox{in } \, \ern \setminus \Omega 
\end{array}
\right.
\end{equation}
is a function 
$u\in W^{s,p}(\ern)$ such that 
$$
\int_{\ern} \int_{\ern} \Phi(u(x){-}u(y))  (\varphi(x) - \varphi(y)) K(x,y)\, dx \, dy = 
\langle\mu,  \varphi\rangle
$$
holds for any $\varphi \in C^\infty_0(\Omega)$ and such that $u = g$ a.e. in $\ern \setminus \Omega$. Accordingly, we say that $u$ is a weak subsolution (supersolution) to \trif{eqvw} if and only if 
\eqn{subsuper}
$$\int_{\ern} \int_{\ern} \Phi(u(x){-}u(y))  (\varphi(x) - \varphi(y)) K(x,y)\, dx \, dy \leq (\geq) 
\langle \mu,  \varphi\rangle $$
holds for every non-negative $\varphi \in C^{\infty}_0(\Omega)$.
\end{definition}
Let us observe that the space $L^{p-1}_{sp}(\ern)$ is an essential tool in order to give the definition above. In the following we shall very often refer to a weak solution (sub/supersolution) to \rif{eqvw} saying that is a weak solutions to $-\mathcal{L}_{\Phi} u = \mu$ in a domain $\Omega$, thereby omitting to specify the boundary value $g$; this will be clear from the context or will not intervene in the estimates. 
\begin{remark} \label{costanti} With $k \in \er \setminus \{0\}$ being a constant,  
Definition \ref{weak} is such that $u+k$ is not a weak solution anymore, essentially because constants do not belong to $W^{s,p}(\ern)$ (otherwise \rif{subsuper} is still satisfied by $u+k$). On the other hand, what we really need in all regularity estimates (that is in all the proofs but those for Theorem \ref{existence} below) is that \rif{subsuper} is satisfied, that $u,g \in W^{s,p}_{\loc}(\er^n)$ and that $g \in L^{p-1}_{sp}(\ern)$. At this point we note that $g \in L^{p-1}_{sp}(\ern)$ implies that $g+k \in L^{p-1}_{sp}(\ern)$ and therefore given a weak solution $u$, we can apply all the local estimates valid for $u$ to $u +k$, too. This will be done several times in the rest of the paper. 
\end{remark}
We are now ready for the following:
\begin{definition}[{\rm SOLA for the Dirichlet problem}]\label{soladef}
Let $\mu \in \mathcal M(\ern)$, $g \in  W^{s,p}_{\loc} (\ern) \cap L^{p-1}_{sp}(\ern)$ and let $-\lp$ be 
 defined in \trif{duality} under assumptions \trif{monophi}-\trif{basicbounds}.
We say that  a function $u\in W^{h,q}(\Omega)$ for 
\eqn{condizioni}
$$ h \in (0,s)\,, \qquad \max\{1,p-1\} =:  q_* \leq  q < \bar q:=  \min \left\{ \frac{n(p-1)}{n-s} \,, p \right\}\;,$$
is a SOLA to \trif{eq} 
if it is a distributional solution to $ - \mathcal{L}_{\Phi} u =  \mu$ in $\Omega$, that is 
\begin{equation}\label{veryweak}
\int_{\ern} \int_{\ern} \Phi(u(x){-}u(y))  (\varphi(x) - \varphi(y)) K(x,y)\, dx \, dy = \int_{\ern} \varphi \, d\mu
\end{equation}
holds whenever $\varphi \in C^\infty_0(\Omega)$, if $u = g$ a.e. in $\ern \setminus \Omega$. Moreover it has to satisfy the following approximation property: There exists a sequence of functions $\{u_j\} \subset W^{s,p}(\ern)$ weakly solving the approximate Dirichlet problems 
\eqn{approssimazione}
$$
\left\{
\begin{array}{rl}
  -\mathcal{L}_{\Phi} u_j = \mu_j & \mbox{in } \; \Omega    \\
  u_j =  g_j &  \mbox{on } \, \ern \setminus \Omega \,,
\end{array}
\right.
$$
in the sense of Definition \ref{weak}, such that 
$u_j$ converges to $u$ a.e. in $\ern$ and locally in $L^q(\ern)$. Here the sequence $\{\mu_j\} \subset C_0^\infty(\ern)$ converges to $\mu$ weakly in the sense of measures in $\Omega$ and moreover satisfies
\begin{equation} \label{eq:meas conv cond}
\limsup_{j \to \infty} |\mu_j|(B) \leq |\mu|(\overline B)
\end{equation}
whenever $B$ is a ball. The sequence $ \{g_j\} \subset C_0^\infty(\ern)$ converges to $g$ in the following sense: For all balls $B_r \equiv B_r(z)$ with center 
in $z$ and radius $r>0$, it holds that
\begin{equation} \label{eq:g_j conv}
g_j \to g \quad \mbox{in } \;  W^{s,p}(B_r)\,,\qquad  \mbox{and} \qquad \lim_{j}\, {\rm Tail}(g_j-g; z, r)=0\,.
\end{equation}
\end{definition}
Condition~\eqref{eq:meas conv cond} can be easily seen to be satisfied if, for example, the sequence $\{\mu_j\}$ is obtained via convolutions with a family of standard mollifiers; as a matter of fact, this is a canonical way to construct the approximating sequence $\{\mu_j\}$ when showing the existence of SOLA. 

A SOLA to \trif{eq} always exists, as stated in the next theorem.
\begin{theorem}[Solvability]\label{existence}
Let $\mu \in \mathcal M(\ern)$, $g \in  W^{s,p}_{\loc} (\ern) \cap L^{p-1}_{sp}(\ern)$ and let $-\lp$ be 
defined in \trif{duality} under assumptions \trif{monophi}-\trif{basicbounds}. Then there exists a {\rm SOLA} $u$ to \trif{eq} in the sense of Definition \ref{soladef}, such that $u \in W^{h,q}(\Omega)$ for every $h$ and $q$ as described in \trif{condizioni}. 
\end{theorem}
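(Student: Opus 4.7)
The plan is to implement the approximation scheme built into Definition~\ref{soladef}. Begin by constructing $\mu_j \in C_0^\infty(\ern)$ via standard mollification $\mu_j := \mu \ast \eta_{1/j}$: weak convergence of measures together with~\eqref{eq:meas conv cond} is then automatic. The assumption $g \in W^{s,p}_{\loc}(\ern) \cap L^{p-1}_{sp}(\ern)$ lets one choose $g_j \in C_0^\infty(\ern)$ by combining cut-offs on exhausting balls with mollification so that both requirements in~\eqref{eq:g_j conv} hold. With $\mu_j, g_j$ smooth, existence of $u_j \in W^{s,p}(\ern)$ weakly solving~\eqref{approssimazione} (with $u_j - g_j \in W^{s,p}_0(\Omega)$) follows from standard monotone operator theory: assumptions~\eqref{monophi}--\eqref{thekernel} guarantee that $-\lp$ is bounded, coercive, monotone and hemicontinuous on the closed convex set $g_j + W^{s,p}_0(\Omega)$, and $\mu_j \in (W^{s,p}(\Omega))'$ trivially, so a Browder--Minty or Leray--Lions argument applies.

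The heart of the proof is the derivation of a priori estimates uniform in $j$. Following the Boccardo--Gallou\"et philosophy, one tests the approximate equation with the admissible test function $T_k(u_j - g_j)$, where $T_k(t) := \max(-k, \min(k,t))$. Using the sign properties of $\Phi$ that follow from~\eqref{monophi}, the ellipticity~\eqref{thekernel}, and the contraction/sign-preservation of $T_k$, and splitting the double integral according to which of the truncated quantities saturates, one obtains an energy estimate of the form
\[
[T_k(u_j - g_j)]_{s,p;\ern}^p \leq c\, k\, \bigl( |\mu_j|(\Omega) + \mathcal{A}_j \bigr),
\]
where $\mathcal{A}_j$ collects the $W^{s,p}$-energy of $g_j$ on a fixed neighbourhood of $\Omega$ together with $(p-1)$-th powers of $\tail(g_j; z, r)$, all uniformly bounded thanks to~\eqref{eq:g_j conv}. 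Combining this with the fractional Sobolev embedding $W^{s,p} \hookrightarrow L^{np/(n-sp)}$ and a Marcinkiewicz/level-set argument yields uniform bounds on $\|u_j\|_{L^q(\Omega)}$ and on the Gagliardo seminorm $[u_j]_{h,q;\Omega}$ for every pair $(h,q)$ satisfying~\eqref{condizioni}, together with uniform bounds on $\tail(u_j; z, r)$ for balls compactly contained in $\ern$.

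With these uniform estimates in hand, the compact embedding $W^{h,q}(\Omega) \hookrightarrow L^q(\Omega)$ plus a diagonal exhaustion argument extracts a subsequence (not relabelled) converging to some $u$ a.e.\ in $\ern$ and in $L^q_{\loc}(\ern)$, with $u \in W^{h,q}(\Omega)$ for all admissible $h,q$. Outside $\Omega$ one has $u_j = g_j$, and the convergence $g_j \to g$ in~\eqref{eq:g_j conv} forces $u = g$ a.e.\ on $\ern \setminus \Omega$. To pass to the limit in~\eqref{veryweak}, fix $\varphi \in C_0^\infty(\Omega)$ with $\spt\varphi \subset B_R$; the right-hand side converges because $\mu_j \rightharpoonup \mu$ and $\varphi$ is continuous with compact support in $\Omega$. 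For the left-hand side, split the domain of integration into $B_{2R} \times B_{2R}$ and its complement. On the inner region, choosing $q \in (p-1,\bar q)$ (admissible since $p>p_*$) makes the uniform bound on $\|u_j\|_{L^q}$ deliver equiintegrability of $|u_j(x)-u_j(y)|^{p-1}K(x,y)$ on bounded sets, so Vitali's theorem combined with a.e.\ convergence and continuity of $\Phi$ closes the argument. On the outer region the factor $\varphi(x)-\varphi(y)$ localises one variable to $B_R$, and the uniform tail bound together with~\eqref{eq:g_j conv} enables passage by dominated convergence.

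The main obstacle is the a priori estimate of the second paragraph. The difficulty is twofold: first, truncations do not interact with a nonlocal energy as cleanly as they do with the local Dirichlet integral, so the cross region where $T_k$ saturates on one side only must be handled carefully, exploiting the monotonicity of $\Phi$ to ensure it contributes with the correct sign. Second, the nonzero exterior datum $g_j$ both enters the test function and generates boundary/tail contributions which must be absorbed uniformly using~\eqref{eq:g_j conv}. Once the truncation energy bound is secured, the passage to a weak $L^q$ bound on the Gagliardo seminorm proceeds by a level-set computation in the fractional Sobolev scale, completely analogous to the classical Boccardo--Gallou\"et trick.
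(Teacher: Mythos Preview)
Your route is genuinely different from the paper's. The paper does \emph{not} test with $T_k(u_j-g_j)$ followed by a Marcinkiewicz/level-set argument. Instead, for each $j$ it introduces an auxiliary $v_j\in W^{s,p}(\ern)$ solving the \emph{homogeneous} problem $-\widetilde{\mathcal L}_{u_j}v_j=0$ in $\Omega$ with $v_j=g_j$ outside $\Omega$. A coercivity computation (testing with $v_j-g_j$, Lemma~\ref{lemma:energy}) bounds $[v_j]_{W^{s,p}(B_R)}$ uniformly by the data of $g_j$; the difference $w_j:=u_j-v_j$ is then bounded in $W^{h,q}(\Omega)$ via the comparison Lemma~\ref{lemma:existence comp}, which in turn rests on Lemma~\ref{estim1} where one tests the subtracted equations with $\varphi_\pm=\pm\bigl(d^{1-\xi}-(d+(w_j)_\pm)^{1-\xi}\bigr)$. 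H\"older's inequality plus the fractional Sobolev embedding then gives the $W^{h,q}$ bound on $w_j$ directly (Lemmas~\ref{lemma:comp est grad}--\ref{lemma:comp p<2}), with no level-set step; triangle inequality finishes. This isolates all boundary/tail contributions from $g_j$ in the estimate for $v_j$, and the same comparison machinery is reused later for the potential estimates, so the investment pays off twice.

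Your plan is plausible, but the sentence ``a Marcinkiewicz/level-set argument yields uniform bounds on the Gagliardo seminorm $[u_j]_{h,q;\Omega}$ \ldots\ completely analogous to the classical Boccardo--Gallou\"et trick'' conceals a genuine obstacle. In the local case one splits $\{|Du|>\lambda\}$ according to $\{|u|\leq k\}$, where $Du=DT_ku$, and $\{|u|>k\}$, whose measure is controlled. The nonlocal energy does not localise this way: $|u(x)-u(y)|=|T_ku(x)-T_ku(y)|$ only on the \emph{product} set $\{|u(x)|\leq k\}\times\{|u(y)|\leq k\}$, and on its complement you have no immediate pointwise control on $|u(x)-u(y)|/|x-y|^h$. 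One can force this through (e.g.\ via a dyadic decomposition $u=T_1u+\sum_{j\geq 0}(T_{2^{j+1}}u-T_{2^j}u)$ and summing the resulting seminorm contributions), but it is not the routine step you suggest, and handling it simultaneously with the cross terms generated by the nonzero $g_j$ in $T_k(u_j-g_j)$ is exactly the kind of difficulty the paper's comparison scheme is built to avoid.
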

\subsection{Potential bounds and fine properties of solutions}\label{pot sec} Our first main result concerning nonlinear potential estimates is the following one, involving the Wolff potentials defined in \rif{wolff}:\begin{theorem}[Potential upper bound]\label{pot0}
Let $\mu \in \mathcal M(\ern)$, $g \in  W^{s,p}_{\loc} (\ern) \cap L^{p-1}_{sp}(\ern)$ and let $-\lp$ be 
defined in \trif{duality} under assumptions \trif{monophi}-\trif{basicbounds}. Let $u$ be a {\rm SOLA} to 
\trif{eq} and assume that for a ball $B_r(x_0) \subset \Omega$ the Wolff potential ${\bf W}_{s,p}^\mu(x_0,r)$ is finite. Then $x_0$ is a Lebesgue point of $u$ in the sense that there exists the precise representative of $u$ at $x_0$
\eqn{precise}
$$u(x_0) := \lim_{\varrho \to 0} (u)_{B_\varrho(x_0)}=\lim_{\varrho \to 0} \mean{B_\varrho(x_0)}u\, d x$$
 and the following estimate holds for a constant $c$ depending only on 
$n,s,p,\Lambda$:
\eqn{stimawolff1}
$$
|u(x_0)|  \leq c {\bf  W}_{s,p}^\mu(x_0,r) + c\left(\mean{B_r(x_0)} |u|^{q_*} \, dx \right)^{1/q_*} +c{\rm Tail}(u ; x_0, r)\,,
$$
where $q_* := \max\{1,p-1\}$.  
\end{theorem}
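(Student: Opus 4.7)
The plan is to adapt the Kilpel\"ainen--Mal\'y iteration scheme to the nonlocal, measure-data setting, with the long-range interactions controlled through the $\tail$ functional. By the defining approximation property of SOLA it suffices to prove \trif{stimawolff1} for the approximating weak solutions $u_j \in W^{s,p}(\ern)$ with constants independent of $j$, and then to pass to the limit using a.e.\ convergence on $B_r(x_0)$ together with \trif{eq:meas conv cond} and \trif{eq:g_j conv}. Accordingly, I work below with a weak solution $u \in W^{s,p}(\ern)$.

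On a dyadic chain of balls $B_i := B_{r_i}(x_0)$ with $r_i := \sigma^i r$ and $\sigma \in (0,1/4)$ to be chosen, I consider the $\lp$-harmonic replacement $v_i$ of $u$, i.e.\ the weak solution of $-\lp v_i = 0$ in $B_i$ with $v_i = u$ in $\ern\setminus B_i$; its existence follows by minimisation of the associated energy. The iteration rests on two ingredients I expect to be developed earlier in the paper. First, a Boccardo--Gallou\"et-type comparison estimate obtained by testing with a suitable truncation of $u-v_i$ and exploiting the monotonicity \trif{monophi},
$$
\Bigl(\mean{B_i} |u-v_i|^{q_*}\, dx\Bigr)^{1/q_*} \leq c \Bigl(\frac{|\mu|(B_i)}{r_i^{n-sp}}\Bigr)^{1/(p-1)}\,.
$$
Second, a Campanato-type excess decay for the homogeneous solution $v_i$ on smaller concentric balls,
$$
\Bigl(\mean{B_{i+1}} |v_i - (v_i)_{B_{i+1}}|^{q_*}\, dx\Bigr)^{1/q_*} \leq c\sigma^\alpha \Bigl[\Bigl(\mean{B_i} |v_i - (v_i)_{B_i}|^{q_*}\, dx\Bigr)^{1/q_*} + \tail(v_i-(v_i)_{B_i};\, x_0, r_i)\Bigr]
$$
for some $\alpha \in (0,1)$; the tail term is the characteristic nonlocal feature.

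I then set up the iteration via a \emph{global excess functional} combining oscillation and tail,
$$
E_i := \Bigl(\mean{B_i} |u - (u)_{B_i}|^{q_*}\, dx\Bigr)^{1/q_*} + \tail(u-(u)_{B_i};\, x_0, r_i)\,.
$$
Splitting $u = v_i + (u-v_i)$ on $B_{i+1}$ and rewriting $\tail(u-(u)_{B_{i+1}};x_0,r_{i+1})$ in terms of $\tail(v_i-(v_i)_{B_i};x_0,r_i)$ plus a contribution from $u-v_i$ on the complement, the two ingredients combine into the closed iteration
$$
E_{i+1} \leq c\sigma^\alpha E_i + c \Bigl(\frac{|\mu|(B_i)}{r_i^{n-sp}}\Bigr)^{1/(p-1)}\,.
$$
Fixing $\sigma$ so that $c\sigma^\alpha \leq 1/2$ and iterating gives a geometric decay of the initial term together with an error bounded by $c\sum_{j\geq 0} (|\mu|(B_j)/r_j^{n-sp})^{1/(p-1)}$; a standard Riemann-sum comparison with \trif{wolff} bounds the latter by $c{\bf W}_{s,p}^\mu(x_0,r)$. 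Since $|(u)_{B_{i+1}}-(u)_{B_i}| \leq c(\sigma)E_i$, the sequence $\{(u)_{B_i}\}$ is Cauchy, its limit equals the precise representative \trif{precise}, and telescoping combined with the elementary bound $E_0 \leq c(\mean{B_r}|u|^{q_*}\,dx)^{1/q_*} + c\tail(u;x_0,r)$ yields \trif{stimawolff1}. The Lebesgue-point assertion follows because finiteness of ${\bf W}_{s,p}^\mu(x_0,r)$ forces the summability of the error terms, hence $E_i \to 0$.

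The main obstacle is the precise design of $E_i$: the Campanato-type estimate for $v_i$ unavoidably produces a tail of $v_i-(v_i)_{B_i}$ that must be re-expressed as a tail of $u-(u)_{B_i}$ (up to the already-controlled $u-v_i$ piece), so that the iteration actually closes rather than accumulating across scales. This forces the tail $\tail(u-(u)_{B_i};x_0,r_i)$ — recentred at the current average, not at some fixed constant — into the excess functional, and requires careful bookkeeping of how the re-centring $(u)_{B_i}\to(u)_{B_{i+1}}$ and the replacement of $v_i$ by $u$ on $\ern\setminus B_i$ jointly affect the tail contribution. This is precisely the role that the ``global excess functional'' advertised in the abstract plays.
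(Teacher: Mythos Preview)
Your proposal is essentially the paper's own strategy: the same global excess functional $E_i = A(u;x_0,r_i)+\tail(u-(u)_{B_{r_i}};x_0,r_i)$, the same two inputs (a Boccardo--Gallou\"et comparison estimate for $u-v_i$ and a Campanato-type decay for the $\lp$-harmonic replacement $v_i$), and the same mechanism of closing an iteration on $E_i$. The only methodological difference is that you run a discrete dyadic iteration $E_{i+1}\le c\sigma^\alpha E_i+c\,[\,|\mu|(B_i)/r_i^{n-sp}\,]^{1/(p-1)}$, whereas the paper derives the pointwise inequality at every scale $\varrho$, multiplies by $\varrho^{-1}$, integrates in $\varrho$, and reabsorbs after an integration by parts; this continuous version buys them the stronger conclusion $\int_0^r E(u;x_0,t)\,dt/t\le c\,{\bf W}^\mu_{s,p}(x_0,r)+c\,E(u;x_0,r)$ (their Theorem~\ref{pot}), from which \trif{stimawolff1} is then read off.

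One point needs correcting: the comparison estimate you write down is only valid when $p\ge 2$. In the subquadratic range $2-s/n<p<2$ the paper's Lemma~\ref{lemma:comp p<2} shows that the bound for $\bigl(\mean_{B_i}|u-v_i|\,dx\bigr)$ carries an additional term $c\,[E(u;x_0,2r_i)]^{2-p}\,|\mu|(B_{2r_i})/r_i^{n-sp}$, so the iteration does not close in the clean form you state. The fix (which the paper uses) is Young's inequality with conjugate exponents $(1/(2-p),1/(p-1))$ and a free parameter $\delta\in(0,1)$: the recursion becomes $E_{i+1}\le \bigl(c\sigma^\alpha + c(\sigma)\,\delta\bigr)E_i + c(\sigma,\delta)\,[\,|\mu|(B_i)/r_i^{n-sp}\,]^{1/(p-1)}$, and one first fixes $\sigma$ small to make $c\sigma^\alpha\le 1/4$, then $\delta$ small so that $c(\sigma)\,\delta\le 1/4$. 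With this adjustment your scheme goes through exactly as you describe.
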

Estimate \rif{stimawolff1} formally gives the one in \rif{pot1} when setting $s=1$, apart from the additional tail term 
${\rm Tail}(u ; x_0, r)$ appearing on the right-hand side, whose role is to encode 
the long-range interactions appearing when dealing with nonlocal problems. The proof is anyway considerably 
different from the ones offered in \cite{KM2, TW}, and is based on a different comparison scheme, which is particularly delicate in the nonlocal case. 
Theorem \ref{pot0} is sharp in describing the pointwise behaviour of the SOLA in the sense that the Wolff potential appearing on the right hand side of \rif{stimawolff1} cannot be replaced by any other potential (as it is on the other hand easy to guess by 
basic dimension analysis). Indeed, if the measure $\mu$ is nonnegative, then we also have the following potential lower bound, that in turn implies an optimal description, in terms of Lebesgue points :
\begin{theorem}[Potential lower bound and fine properties]\label{thm:lower}
Let $\mu \in \mathcal M(\ern)$ be a nonnegative measure, $g \in  W^{s,p}_{\loc} (\ern) \cap L^{p-1}_{sp}(\ern)$ and let $-\lp$ be 
 defined in \trif{duality} under assumptions \trif{monophi}-\trif{basicbounds} with $p< n/s$. Let $u$ be a 
{\rm SOLA} to \trif{eq} which is non-negative in the ball $B_r(x_0)\subset \Omega$ and 
such that the approximating sequence $\{\mu_j\}$ for $\mu$ as described in Definition \ref{soladef} is made of nonnegative functions. Then the estimate 
\eqn{stimawolff2}
$$
{\bf W}_{s,p}^\mu(x_0,r/8) \leq c u(x_0) + c {\rm Tail}(u_- ; x_0, r/2)
$$
holds for a constant $c\equiv c(n,s,p,\Lambda)$, as soon as ${\bf W}_{s,p}^\mu(x_0,r/8)$ is finite, where 
$u_-:=\max\{-u,0\}$. In this case, according to Theorem \ref{pot0}, $u(x_0)$ is defined as the precise representative of $u$ at $x_0$ as in \trif{precise}. Moreover, when ${\bf W}_{s,p}^\mu(x_0,r/8)$ is infinite, we have that 
\eqn{essinf} 
$$
\lim_{t \to 0} \,(u)_{B_t(x_0)} = \infty\,. 
$$
Consequently, every point is a Lebesgue point for $u$. 
\end{theorem}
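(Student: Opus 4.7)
The plan follows the dyadic nonlinear potential-theoretic scheme of Kilpel\"ainen--Mal\'y, adapted to the nonlocal setting with tail corrections. Fix radii $r_j := r/(8 \cdot 2^j)$ and balls $B_j := B_{r_j}(x_0)$. Since by hypothesis the approximating sequence $\{\mu_j\}$ in Definition \ref{soladef} is nonnegative and converges to $\mu \geq 0$, the weak comparison principle for $-\lp$ is available at the approximate level and, passing to the limit, on each $B_j$ we may compare $u$ with its \emph{$-\lp$-harmonic replacement} $w_j$: the weak solution of $-\lp w_j = 0$ in $B_j$ with $w_j = u$ in $\ern \setminus B_j$, in the sense of Definition \ref{weak}. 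Comparison then yields $u \geq w_j$ a.e.\ in $B_j$; this is the only place where the sign assumptions on $\mu$ and on $\mu_j$ are essential.

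The iteration combines two ingredients per scale. The first is a \emph{nonlocal excess-drop estimate}: by testing the weak equation for the difference $u - w_j$ with a suitable cut-off of $(u-w_j)_+$ and using the monotonicity \rif{monophi} of $\Phi$ in the integrand of \rif{duality}, one controls a Gagliardo-type seminorm of $u - w_j$ on $B_j$ by $\mu(B_j)$; a nonlocal Poincar\'e/Sobolev inequality then translates this into an averaged lower bound of the schematic form
\[
\left(\mean{B_{j+1}} (u - w_j)^{q_*}\, dx\right)^{1/q_*} \geq \frac{1}{c}\left(\frac{\mu(B_j)}{r_j^{n-sp}}\right)^{1/(p-1)} - 2^{-j}\,\tail(u_-; x_0, r/2).
\]
The second is a \emph{nonlocal weak Harnack / infimum estimate} for the $-\lp$-harmonic $w_j$, yielding $(w_j)_{B_{j+1}} \leq c \, \essinf_{B_{j+1}} w_j + c\, \tail((w_j)_-; x_0, r_j)$; here the tail term is dominated by $\tail(u_-; x_0, r/2)$ uniformly in $j$, because $u \geq 0$ in $B_r$ implies $(w_j)_- \leq u_-$ outside $B_r$ and $(w_j)_-=0$ inside.

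Chaining the two ingredients across scales, the dyadic sum of the measure gains telescopes into a discrete version of $\mathbf{W}^\mu_{s,p}(x_0, r/8)$, the tail contributions add up in a geometric series that is absorbed in a single $\tail(u_-; x_0, r/2)$, and the averages $(u)_{B_j}$ (or equivalently $w_j(x_0)$) converge to $u(x_0)$ whenever the precise representative exists, which by Theorem \ref{pot0} is the case whenever ${\bf W}_{s,p}^\mu(x_0, r/8)$ is finite. This yields \rif{stimawolff2}. For the complementary case -- an infinite Wolff potential -- the same one-step identity, iterated, forces $(u)_{B_t(x_0)} \to +\infty$ as $t \to 0$, giving \rif{essinf} and turning every point into a Lebesgue point in the extended-real sense.

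The main obstacle will be the nonlocal excess-drop estimate: in contrast with the local case, the natural test function $(u-w_j)_+$ is not compactly supported in $B_j$, and the double integral \rif{duality} produces nonlocal interaction terms coupling $B_j$ to its complement at every scale. Showing that all such contributions, summed dyadically, can be packaged into the single global quantity $\tail(u_-; x_0, r/2)$ -- rather than into a possibly divergent sum of tails -- is the technically delicate point, and is where the sign hypotheses on $\mu$ and on $u$ in $B_r$ are used together with the explicit pointwise decay of the kernel built into \rif{thekernel}.
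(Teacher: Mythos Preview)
Your scheme conflates the machinery behind the \emph{upper} potential bound with that behind the lower one, and the central step---your ``nonlocal excess-drop estimate''---goes in the wrong direction. Testing the difference of the weak formulations for $u$ and the harmonic replacement $w_j$ against $(u-w_j)_+$ and invoking the monotonicity \rif{monophi} yields, exactly as in Lemmas \ref{estim1}--\ref{lemma:comp p<2}, an \emph{upper} bound on a Gagliardo seminorm of $u-w_j$ in terms of $\int (u-w_j)\, d\mu$; a fractional Poincar\'e--Sobolev inequality then turns this into an \emph{upper} bound on $\bigl(\mean{B_{j+1}}|u-w_j|^{q_*}\bigr)^{1/q_*}$, not a lower one. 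The displayed inequality you write,
\[
\left(\mean{B_{j+1}} (u - w_j)^{q_*}\, dx\right)^{1/q_*} \;\geq\; \frac{1}{c}\left(\frac{\mu(B_j)}{r_j^{n-sp}}\right)^{1/(p-1)} - (\cdots)\,,
\]
is the reverse of what these test-function arguments deliver, and there is no mechanism in your plan that produces it. Harmonic replacement plus comparison is the right engine for Theorem \ref{pot0}; it does not drive Theorem \ref{thm:lower}.

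The paper's argument dispenses with harmonic replacement altogether. One works at the approximate level with $u_j$ (a genuine weak supersolution since $\mu_j\geq 0$) and tests the equation $-\widetilde{\mathcal L}_{u_j}u_j=\mu_j$ directly against a cutoff $\varphi$ supported in $B_{5\varrho/4}$; this places $\mu_j(B_\varrho)$ on the left and a fractional energy of $u_j$ plus a tail of $(u_j)_-$ on the right. The energy is then controlled via a Caccioppoli inequality for nonnegative supersolutions together with the weak Harnack inequality (both from \cite{DKP2}), yielding the single-scale estimate
\[
\left[\frac{\mu_j(B_\varrho)}{\varrho^{n-sp}}\right]^{1/(p-1)} \leq c\,\Bigl(\inf_{B_\varrho}\tilde u_j-\inf_{B_{4\varrho}}\tilde u_j\Bigr)+c\,{\rm Tail}\bigl((\tilde u_j-\inf_{B_{4\varrho}}\tilde u_j)_-;x_0,4\varrho\bigr)\,,
\]
with $\tilde u_j$ a vertical shift of $u_j$. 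Integrating in $\varrho$, the differences of infima telescope. The tails do \emph{not} sum geometrically by themselves: a separate recursion shows that ${\rm Tail}$ at scale $\varrho$ is bounded by a small multiple of ${\rm Tail}$ at scale $M\varrho$ plus a difference of infima, and it is only after integrating this recursion and reabsorbing that all tail contributions collapse into a single ${\rm Tail}(u_-;x_0,r/2)$. Passing to the limit via the SOLA convergence of Definition \ref{soladef} gives \rif{stimawolff2}; the case of infinite Wolff potential and \rif{essinf} follow from the same integrated inequality before taking $t\to 0$.
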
 
Pointwise estimates via potentials imply local Calder\'on-Zygmund type estimates; this is a consequence of the fact that the behaviour of Wolff potentials in rearrangement invariant function spaces, and in particular in Lebesgue spaces, is  
known \cite{C}. As an example of applications, we confine ourselves to report the following: 
\begin{corollary}[Calder\'on-Zygmund estimates]\label{cztheory} Let $\mu \in \mathcal M(\ern)$, $g \in  W^{s,p}_{\loc} (\ern) \cap L^{p-1}_{sp}(\ern)$ and let $-\lp$ be 
 defined in \trif{duality} under assumptions \trif{monophi}-\trif{basicbounds}. Let $u$ be a {\rm SOLA} to 
\trif{eq}. Then
\begin{itemize}
\item If $sp < n$, then $u$ belongs to the following Marcinkiwiecz space:
$$
 u \in \mathcal M^{\frac{n(p-1)}{n-sp}}_{\loc}(\Omega) \Longleftrightarrow \sup_{\lambda \geq 0} \lambda^{\frac{n(p-1)}{n-sp}} |\{x \in \Omega'\, : \, |u(x)|> \lambda \}| < \infty
$$
for every open subset $\Omega' \Subset \Omega$. 
\item If $spq<n$ and $q>1$, then
$$
\mu \in L^q_{\loc}(\Omega) \Longrightarrow u \in L^{\frac{nq(p-1)}{n-spq}}_{\loc}(\Omega)\;.
$$
\end{itemize}

\end{corollary}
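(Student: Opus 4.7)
The proof proceeds by combining the pointwise Wolff potential bound of Theorem~\ref{pot0} with the classical mapping properties of the nonlinear Wolff potential in rearrangement invariant spaces. The only real work is to verify that the two lower-order terms in \rif{stimawolff1} are uniformly controlled on a fixed $\Omega' \Subset \Omega$. Fix such an $\Omega'$ and set $r_0 := \dist(\Omega', \partial\Omega)/4$, so that $B_{r_0}(x_0) \subset \tilde\Omega \Subset \Omega$ for every $x_0 \in \Omega'$ and some fixed open neighborhood $\tilde\Omega$ of $\Omega'$. Since the Wolff potential of a finite measure is finite a.e., Theorem~\ref{pot0} applies at almost every $x_0 \in \Omega'$ to give
$$
|u(x_0)| \leq c\, {\bf W}^\mu_{s,p}(x_0,r_0) + c\left(\mean{B_{r_0}(x_0)}|u|^{q_*}\, dx\right)^{1/q_*} + c\,{\rm Tail}(u;x_0,r_0).
$$
By Theorem~\ref{existence} we have $u \in W^{h,q}(\Omega) \subset L^{q_*}_{\loc}(\Omega)$, hence the local average is uniformly bounded on $\Omega'$. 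Since $u = g \in L^{p-1}_{sp}(\ern)$ outside $\Omega$ and $u \in L^{p-1}_{\loc}(\Omega)$, one has $u \in L^{p-1}_{sp}(\ern)$, and a standard comparison between tails centered at different points in a bounded set shows that ${\rm Tail}(u;x_0,r_0)$ is uniformly bounded for $x_0 \in \Omega'$.

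With the two lower-order terms absorbed into a constant $C(\Omega')$, the bound reduces to $|u(x_0)| \leq c\, {\bf W}^{\mu}_{s,p}(x_0, r_0) + C(\Omega')$, where ${\bf W}^{\mu}_{s,p}(x_0, r_0)$ only sees the portion of $\mu$ carried by $\tilde\Omega$. For the first assertion, assume $sp < n$ and invoke the classical Havin--Maz'ya/Cianchi bound (see \cite{MH, C}) stating that, for a finite Borel measure $\nu$ on $\ern$,
$$
\sup_{\lambda > 0} \lambda^{\frac{n(p-1)}{n-sp}} \big|\{x \in \ern \,:\, {\bf W}^\nu_{s,p}(x,\infty) > \lambda\}\big| \leq c \, |\nu|(\ern)^{\frac{n}{n-sp}}.
$$
Applied with $\nu := \mu|_{\tilde\Omega}$, this yields $u \in \mathcal{M}^{n(p-1)/(n-sp)}(\Omega')$, and since $\Omega'$ is arbitrary, the first claim follows.

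For the second assertion, assume $spq < n$, $q > 1$, and $\mu \in L^q_{\loc}(\Omega)$. The companion $L^q$-to-$L^{nq(p-1)/(n-spq)}$ mapping property of the Wolff potential (cf. \cite{C})
$$
\|{\bf W}^\nu_{s,p}(\cdot,\infty)\|_{L^{nq(p-1)/(n-spq)}(\ern)} \leq c\, \|\nu\|_{L^q(\ern)}^{1/(p-1)}
$$
applied with $\nu := \chi_{\tilde\Omega}\mu \in L^q(\ern)$ delivers $u \in L^{nq(p-1)/(n-spq)}(\Omega')$. The main (minor) obstacle in the whole argument is the uniform control of the Tail term in terms of a single, finite quantity depending only on $\Omega'$; once this is in place, the corollary is an immediate application of Theorem~\ref{pot0} together with the standard potential-space embeddings.
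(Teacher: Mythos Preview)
Your proof is correct and follows exactly the approach the paper indicates: the paper does not give a detailed argument for this corollary, but simply remarks before its statement that ``Pointwise estimates via potentials imply local Calder\'on-Zygmund type estimates; this is a consequence of the fact that the behaviour of Wolff potentials in rearrangement invariant function spaces, and in particular in Lebesgue spaces, is known \cite{C}.'' Your write-up makes this explicit by applying Theorem~\ref{pot0}, uniformly controlling the average and Tail terms on $\Omega'$, and then invoking the Havin--Maz'ya/Cianchi mapping properties of ${\bf W}^\mu_{s,p}$, which is precisely what the paper intends.
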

The first result appearing in Corollary \ref{cztheory} is the sharp fractional counterpart of the Marcinkiewicz estimate in \cite{elenco}. Theorem \ref{pot0} is actually itself a corollary of a more general result that we report below, and that in a sense quantifies the oscillations of the gradient averages around the considered point. For this we need to introduce another quantity that we shall extensively use throughout the paper. This is the following {\em global excess} functional, which is defined for functions $f \in L_{\rm loc}^{q_*}(\ern) \cap L_{sp}^{p-1}(\ern)$:
\eqn{eccesso} 
$$
E(f;x_0,r) := \left(\mean{B_r(x_0)} |f{-}(f)_{B_r(x_0)}|^{q_*} \, dx \right)^{1/q_*} + {\rm Tail}(f{-}(f)_{B_r(x_0)};x_0,r)\,,
$$
where, as above, $q_* = \max\{1,p-1\}$. When the role of the point $x_0$ will be clear from the context we shall often denote $E(f;r)\equiv E(f;x_0,r) $. The global excess functional is the right tool to quantify, in an integral way, the oscillations of functions and here we show that it plays for nonlocal problems the same role that the traditional local excess functional defined by
$$
A(f;r)\equiv A(f; x_0,r) :=  \left(\mean{B_r(x_0)} |f - (f)_{B_r(x_0)}|^{q_*} \, dx \right)^{1/q_*} $$ 
plays for local ones; see Section \ref{intermediatesec} below. A careful study of the structure of the Tail leads to establish basic properties of the global excess making it a good replacement of the local one; 
these are established in Lemma \ref{lemma:basic E} below. Then Theorem \ref{pot0} follows from a stronger regularity/decay property of the global excess:
\begin{theorem}[Global excess decay]\label{pot}
Under the assumptions of Theorem \ref{pot0} there exists a constant $c \equiv c(n,s,p,\Lambda)$ such that 
the following estimate: 
\eqn{estipot0}
$$
 \int_{0}^r E(u;x_0,t) \, \frac{dt}{t}+|(u)_{B_r(x_0)}-u(x_0)|  \leq c{\bf W}_{s,p}^\mu(x_0,r) + c E(u;x_0,r)\,,
$$
holds whenever ${\bf W}_{s,p}^\mu(x_0,r)$ is finite. 
\end{theorem}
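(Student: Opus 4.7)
The strategy is a Campanato-type iteration on dyadic balls $B_k := B_{r_k}(x_0)$ with $r_k := \sigma^k r$ for a small $\sigma \in (0, 1/4)$ to be chosen. The estimate \eqref{estipot0} will follow from a one-step excess-decay inequality of the form
\begin{equation}\label{onestep}
E(u; x_0, \sigma r_k) \;\leq\; \tfrac{1}{2}\, E(u; x_0, r_k) \;+\; c\left(\frac{|\mu|(B_k)}{r_k^{n-sp}}\right)^{1/(p-1)},
\end{equation}
summed over $k$. The key technical input is a comparison with $\lp$-harmonic replacements together with a decay estimate for the global excess of such replacements.

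\textbf{Step 1: comparison estimate.} On each $B_k$ define $v_k \in W^{s,p}(\ern)$ as the (weak) solution to $-\lp v_k = 0$ in $B_k$ with $v_k \equiv u$ in $\ern \setminus B_k$. A standard testing with $u - v_k$ in \eqref{approssimazione}, combined with the monotonicity \eqref{monophi} and the lower bound in \eqref{thekernel}, and then a passage to the limit using the SOLA approximation \eqref{approssimazione}--\eqref{eq:g_j conv}, yields the Boccardo--Gallou\"et-type fractional bound
$$\left(\mean{B_k} |u - v_k|^{q_*}\,dx \right)^{1/q_*} \;\leq\; c\left(\frac{|\mu|(B_k)}{r_k^{n-sp}}\right)^{1/(p-1)}.$$
Since $u = v_k$ outside $B_k$, the corresponding tail difference vanishes for balls centered at $x_0$ with radius $\geq r_k$; a small manipulation shows this comparison bound also controls the difference of tails at scale $r_k$.

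\textbf{Step 2: excess decay for $v_k$.} Because $v_k$ solves the homogeneous equation $-\lp v_k = 0$ in $B_k$, the H\"older regularity theory for nonlocal operators à la \cite{DKP1,DKP2}, adapted to the global excess, produces $\alpha \in (0,1)$ and $c_0 \geq 1$ (depending only on $n,s,p,\Lambda$) such that for every $\sigma \in (0, 1/4)$,
$$E(v_k; x_0, \sigma r_k) \;\leq\; c_0\, \sigma^\alpha\, E(v_k; x_0, r_k).$$
Here it is essential that the excess functional \eqref{eccesso} includes a tail contribution: the average part decays by H\"older regularity inside $B_k$, while the tail part is handled via the elementary tail manipulations of Lemma~\ref{lemma:basic E} (which, among other things, compares $\tail(v_k - (v_k)_{B_{\sigma r_k}}; x_0, \sigma r_k)$ to $\tail(v_k - (v_k)_{B_k}; x_0, r_k)$ plus an averaged remainder).

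\textbf{Step 3: one-step inequality and iteration.} Adding the two previous steps through the triangle inequality for the $L^{q_*}$-mean and for the tail (both of which extract constants depending on $q_*$, $p$, not worse than $p$-subadditivity), we obtain
$$E(u; x_0, \sigma r_k) \;\leq\; c_1 \sigma^\alpha E(u; x_0, r_k) + c_2 \sigma^{-N}\left(\frac{|\mu|(B_k)}{r_k^{n-sp}}\right)^{1/(p-1)}$$
for some exponent $N = N(n,s,p)$. Fixing $\sigma$ so that $c_1 \sigma^\alpha \leq 1/2$ gives \eqref{onestep} with an absolute constant $c$. Iterating and summing the resulting geometric-like inequality produces
$$\sum_{k=0}^{\infty} E(u; x_0, r_k) \;\leq\; c\, E(u; x_0, r) + c \sum_{k=0}^{\infty}\left(\frac{|\mu|(B_k)}{r_k^{n-sp}}\right)^{1/(p-1)} \;\leq\; c\, E(u; x_0, r) + c\, {\bf W}_{s,p}^\mu(x_0, r),$$
where the last step uses the standard comparison between the dyadic sum and the Wolff integral \eqref{wolff}. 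Since the $E$-functional has controlled oscillation between consecutive dyadic scales (again by Lemma~\ref{lemma:basic E}), the discrete sum is comparable to $\int_0^r E(u; x_0, t)\,dt/t$, which yields the first term on the left of \eqref{estipot0}.

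\textbf{Step 4: Lebesgue point and precise representative.} A standard comparison of means on nested balls gives
$$|(u)_{B_{k+1}} - (u)_{B_k}| \;\leq\; c\, E(u; x_0, r_k),$$
so $\{(u)_{B_k}\}_k$ is Cauchy once the sum $\sum_k E(u; x_0, r_k)$ is finite, which is the case when ${\bf W}_{s,p}^\mu(x_0, r) < \infty$ by Step~3. This identifies $u(x_0) = \lim_k (u)_{B_k}$ with a telescoping bound
$$|(u)_{B_r(x_0)} - u(x_0)| \;\leq\; c\sum_{k=0}^\infty E(u; x_0, r_k),$$
which together with Step~3 completes \eqref{estipot0}. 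A routine non-dyadic interpolation upgrades the limit along $r_k$ to a limit along all radii, giving \eqref{precise} and proving Theorem~\ref{pot0} as a corollary.

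\textbf{Main obstacle.} The genuinely delicate point is Step~2: obtaining an excess decay for $v_k$ in which the tail of $v_k - (v_k)_{B_k}$ is honestly reduced when passing from $B_k$ to $B_{\sigma r_k}$. In the nonlocal setting, shrinking the radius changes the measure defining the tail and mixes in the large-scale behaviour of $v_k$, which is dictated by the values of $u$ outside $B_k$. The whole point of using the global excess $E$ (rather than the purely local $A$) is to make this tail contribution absorbable into $E(v_k; x_0, r_k)$ up to a universal constant; verifying this absorption is the technically hardest step and forces the $p$-subadditivity bookkeeping encoded in Lemma~\ref{lemma:basic E}.
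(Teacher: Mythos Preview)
Your strategy is the paper's: compare $u$ to an $\lp$-harmonic replacement, invoke the global excess decay of Theorem~\ref{lemma:osc red} for that replacement, and iterate. The organization differs: the paper keeps the full three-parameter form of Theorem~\ref{lemma:osc red} (with its integral term), multiplies the resulting one-step inequality by $\varrho^{-1}$, integrates in $\varrho$, integrates by parts to collapse the double integral, and then absorbs; you instead collapse Theorem~\ref{lemma:osc red} to $E(v_k;\sigma r_k)\leq c\sigma^\alpha E(v_k;r_k)$ by taking $r=R$ there and run a cleaner dyadic iteration. Both routes lead to \rif{estipot0}.

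There is one genuine gap in your Step~1. The comparison bound you quote holds as stated only for $p\geq 2$ (Lemma~\ref{lemma:comp est sol}). For $2-s/n<p<2$ the available estimate (Lemma~\ref{lemma:comp p<2}) carries an additional term $c\,[E(u;x_0,2r_k)]^{2-p}\,|\mu|(B_{2r_k})/r_k^{\,n-sp}$, which after Young's inequality with parameter $\delta\in(0,1)$ becomes $\delta\,E(u;x_0,r_k)$ plus a $\delta$-dependent Wolff contribution. This extra $\delta E$-term must be absorbed into the left-hand side of your one-step inequality, forcing a two-parameter choice: fix $\sigma$ first so that $c_1\sigma^\alpha\leq 1/4$, and only then pick $\delta$ so that $c_2\sigma^{-N}\delta\leq 1/4$. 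Without this the argument breaks in the subquadratic range. A smaller point: because $u$ is only a SOLA, the comparison and the one-step inequality must be carried out for the approximating weak solutions $u_j$ (with comparison functions $v_j$ solving $-\widetilde{\mathcal L}_{u_j}v_j=0$) and only then passed to the limit via \rif{eq:meas conv cond}--\rif{eq:g_j conv}; the paper makes this explicit, while your sketch is telegraphic on it.
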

Estimate \rif{estipot0} tells that finiteness of Wolff potentials at a point $x_0$ allows for a pointwise control 
on the oscillations of the solution averages and eventually implies that $x_0$ is a Lebesgue point for $u$. By reinforcing this condition in a uniform decay to zero we obtain the continuity of $u$. Indeed,  the following  holds:
\begin{theorem}[Continuity criterion]\label{thm:cont}
Let $\mu \in \mathcal M(\ern)$, $g \in  W^{s,p}_{\loc} (\ern) \cap L^{p-1}_{sp}(\ern)$ and let $-\lp$ be 
 defined in \trif{duality} under assumptions \trif{monophi}-\trif{basicbounds}. Let $u$ be a {\rm SOLA} to 
\trif{eq} and $\Omega '\Subset \Omega$ be an open subset. If 
\eqn{decay}
$$
\lim_{t \to 0}\sup_{x \in \Omega'} {\bf W}_{s,p}^\mu(x,t) = 0\,,
$$
then $u$ is continuous in $\Omega'$. 
\end{theorem}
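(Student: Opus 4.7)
The strategy is to realize $u$ on $\Omega'$ as the uniform limit of the continuous averages $v_\rho(x) := (u)_{B_\rho(x)}$. First, the decay hypothesis produces a scale $r_0>0$ with $\sup_{x \in \Omega'} {\bf W}^\mu_{s,p}(x,r_0) < \infty$, so Theorem \ref{pot0} (together with the uniform control of the local $L^{q_*}$ norm and Tail of $u$) yields $\sup_{x \in \Omega'} |u(x)| < \infty$; in particular $u \in L^1_{\mathrm{loc}}$ near $\Omega'$. Each $v_\rho$ is then continuous in $x$ by standard translation continuity of averages of $L^1$ functions, so it suffices to produce a rate $\omega(\rho) \to 0$, uniform in $x \in \Omega'$, with $|v_\rho(x) - u(x)| \leq \omega(\rho)$ for every $x \in \Omega'$.

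The pointwise comparison is furnished directly by Theorem \ref{pot}:
$$
|v_\rho(x) - u(x)| \leq c\, {\bf W}^\mu_{s,p}(x,\rho) + c\, E(u;x,\rho).
$$
By assumption the first term tends to zero uniformly in $x \in \Omega'$, so the crux reduces to establishing
$$
\lim_{\rho \to 0}\, \sup_{x \in \Omega'}\, E(u;x,\rho) \,=\, 0. \qquad (\star)
$$

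To prove $(\star)$ I would iterate Theorem \ref{pot} at dyadic scales $\rho_k := r_0\cdot 2^{-k}$, choosing $r_0$ so small that the hypothesis gives $\sup_{x\in\Omega'}{\bf W}^\mu_{s,p}(x,r_0)<\eta$ for a preassigned $\eta$. The integrated bound
$$
\int_0^{r_0} E(u;x,t)\, \frac{dt}{t} \leq c\, {\bf W}^\mu_{s,p}(x,r_0) + c\, E(u;x,r_0)
$$
is then uniformly finite on $\Omega'$. A two-scale application of Theorem \ref{pot} at $\rho_k$ and $\rho_{k+1}$, combined with the triangle inequality, produces
$$
|(u)_{B_{\rho_{k+1}}(x)} - (u)_{B_{\rho_k}(x)}| \leq c\, {\bf W}^\mu_{s,p}(x,\rho_k) + c\, E(u;x,\rho_k) + c\, E(u;x,\rho_{k+1}),
$$
which, inserted into the definition of $E$ (and combined with the elementary comparison of the local excess across a factor-$2$ change of radius), yields a quasi-monotonicity $E(u;x,\rho_{k+1}) \leq C\, E(u;x,\rho_k) + c\, {\bf W}^\mu_{s,p}(x,\rho_k)$ between consecutive dyadic radii. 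Feeding this back into the integrated estimate and exploiting the uniform Wolff smallness then forces $E(u;x,\rho_k) \to 0$ uniformly in $x \in \Omega'$, and interpolation between consecutive dyadic scales propagates the smallness to all intermediate radii, giving $(\star)$.

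The main obstacle is exactly $(\star)$: upgrading the integrated Dini-type decay of $E(u;x,\cdot)$ supplied by Theorem \ref{pot} into a genuine uniform pointwise decay in $\rho$. The Wolff-potential side is handled at once by the hypothesis; what requires work is matching the excess term at every scale, uniformly in $x$. Once $(\star)$ is secured, $v_\rho \to u$ uniformly on $\Omega'$, and continuity of $u$ on $\Omega'$ follows as a uniform limit of continuous functions.
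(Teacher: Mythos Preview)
Your overall architecture matches the paper: reduce continuity to the uniform convergence $(u)_{B_\rho(x)}\to u(x)$ on $\Omega''\Subset\Omega'$, use Theorem~\ref{pot} to bound $|(u)_{B_\rho(x)}-u(x)|$ by $c{\bf W}_{s,p}^\mu(x,\rho)+cE(u;x,\rho)$, and then isolate $(\star)$, the uniform VMO decay of $E$, as the only remaining task. The local boundedness step via Theorem~\ref{pot0} is also as in the paper.

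The gap is precisely in your mechanism for $(\star)$. The ``quasi-monotonicity'' you extract, $E(u;x,\rho_{k+1})\le C\,E(u;x,\rho_k)+c\,{\bf W}_{s,p}^\mu(x,\rho_k)$, comes with a constant $C>1$ (this is just Lemma~\ref{lemma:basic E}(4)), and together with the uniform finiteness of $\int_0^{r_0}E(u;x,t)\,dt/t$ it does \emph{not} force $E(u;x,\rho_k)\to 0$ uniformly. Uniform summability $\sum_k E(u;x,\rho_k)\le M$ only tells you that for each $x$ there is \emph{some} scale $\rho_{k(x)}$ at which $E$ is small; but from that scale the inequality $a_{k+1}\le Ca_k+b_k$ with $C>1$ lets $a_k$ grow again, and nothing prevents the ``bad'' index $k(x)$ from drifting with $x$. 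So the circularity you flagged (``what requires work is matching the excess term at every scale'') is real and is not resolved by the argument you sketch.

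What the paper uses instead is not the \emph{statement} of Theorem~\ref{pot} but an intermediate inequality from its proof, namely~\eqref{eq:decay comp appl}:
\[
E(u;x,\sigma r)\le c\,\sigma^\alpha\Big[\ldots\Big]
+cD(1,\sigma)\Big\{\delta E(u;x,r)+c\delta^{-\eta}\Big[\tfrac{|\mu|(\overline{B_r(x)})}{r^{n-sp}}\Big]^{1/(p-1)}\Big\},
\]
where the $\sigma^\alpha$ comes from the H\"older decay of the homogeneous comparison solution (Theorem~\ref{lemma:osc red}). This is the contractive ingredient your scheme is missing: once one has a uniform bound $E(u;x,r)\le H$ for all small $r$ and $x\in\Omega''$ (obtained from local boundedness of $u$ and a Tail splitting), one first chooses $\sigma$ small to make $c\sigma^\alpha H<\eps/4$, then $\delta$ small to make $cD(1,\sigma)\delta H<\eps/4$, and finally $r$ small (using that $|\mu|(B_r(x))/r^{n-sp}\to 0$ uniformly, a consequence of~\eqref{decay}) to kill the last term. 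This yields $(\star)$ in one step, without any dyadic iteration. If you want to salvage your approach, you need to import exactly this $\sigma^\alpha$-smallness from the comparison with the $\lp$-harmonic replacement; the black-box outputs of Theorems~\ref{pot0} and~\ref{pot} are not enough.
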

The previous general theorem allows to deduce a few corollaries asserting the continuity of solutions in borderline cases and when certain density conditions are satisfied by the measure $\mu$; these are listed below. 
\begin{corollary}[Lorentz regularity]\label{lorentz} Let $\mu \in \mathcal M(\ern)$, $g \in  W^{s,p}_{\loc} (\ern) \cap L^{p-1}_{sp}(\ern)$ and let $-\lp$ be 
 defined in \trif{duality} under assumptions \trif{monophi}-\trif{basicbounds} with $sp < n$. Let $u$ be a {\rm SOLA} to 
\trif{eq} and assume also that $\mu$ satisfies the Lorentz space condition 
$$\mu \in L(n/(sp), 1/(p-1))\;,$$
locally in $\Omega$. 
Then $u$ is continuous in $\Omega$.
\end{corollary}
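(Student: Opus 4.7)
The plan is to derive Corollary \ref{lorentz} directly from the continuity criterion of Theorem \ref{thm:cont}: it is enough to show that, for every open set $\Omega' \Subset \Omega$,
\[
\lim_{t \to 0}\sup_{x \in \Omega'} {\bf W}_{s,p}^\mu(x,t) = 0\,.
\]
Fix such an $\Omega'$ and choose a ball $B_R$ with $\Omega' \Subset B_R \Subset \Omega$. Set $\bar\varrho := \mathrm{dist}(\Omega',\ern\setminus B_R)$ and let $\mu_R$ denote the restriction of $\mu$ to $B_R$; by the local Lorentz assumption, $\mu_R \in L(n/(sp), 1/(p-1))(\ern)$.

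The key point is a rearrangement estimate that is uniform in $x$. Whenever $x \in \Omega'$ and $\varrho \le \bar\varrho$, the ball $B_\varrho(x)$ is contained in $B_R$, and hence the Hardy--Littlewood inequality gives
\[
|\mu|(B_\varrho(x)) \leq \int_0^{\omega_n \varrho^n} \mu_R^*(\sigma)\, d\sigma\,,
\]
where $\mu_R^*$ denotes the decreasing rearrangement of $\mu_R$ and the right-hand side is completely independent of $x$. Substituting $\tau = \omega_n \varrho^n$ in the integral defining ${\bf W}_{s,p}^\mu(x,r)$ and writing the inner integral as $\tau\,\mu_R^{**}(\tau)$, an elementary computation yields a constant $c$ such that
\[
\sup_{x \in \Omega'}{\bf W}_{s,p}^\mu(x, r) \leq c \int_0^{\omega_n r^n} \bigl(\tau^{sp/n}\,\mu_R^{**}(\tau)\bigr)^{1/(p-1)} \frac{d\tau}{\tau}
\]
for every $r \le \bar\varrho$.

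To finish, observe that the right-hand side evaluated with $r = +\infty$ is, up to a universal constant, a standard expression for $\|\mu_R\|_{L(n/(sp), 1/(p-1))}^{1/(p-1)}$, which is finite by assumption. Absolute continuity of the Lebesgue integral then forces the right-hand side to tend to $0$ as $r \to 0$, which is precisely the uniform decay of the Wolff potential required by Theorem \ref{thm:cont}. The only mildly delicate step is the book-keeping of exponents in the change of variables and in identifying the resulting integral with the Lorentz quasi-norm; this is exactly the mechanism used in the rearrangement-invariant Wolff potential theory of Cianchi \cite{C}, and no additional PDE input is needed once Theorem \ref{thm:cont} is available.
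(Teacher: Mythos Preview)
Your proof is correct and is exactly the ``standard linkage between Lorentz spaces and Wolff potentials'' that the paper invokes without spelling out (the paper gives no proof beyond a pointer to \cite{KMb} and \cite{C}). You have supplied the details the paper omits: the uniform rearrangement bound via Hardy--Littlewood, the change of variables identifying the Wolff potential with an integral of $\tau^{sp/n}\mu_R^{**}(\tau)$, and the appeal to the Hardy-type equivalence between $\mu_R^*$ and $\mu_R^{**}$ in the Lorentz quasi-norm (valid here since $n/(sp)>1$), after which Theorem~\ref{thm:cont} applies.
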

Corollary \ref{lorentz} is immediate via a standard linkage between Lorentz spaces and Wolff potentials; we refer for instance to \cite{KMb} for more details and to \cite{steinweiss} for basic definitions and properties of Lorentz spaces. The next corollary is instead completely immediate and provides a sharp and borderline extension to nonlocal operators of classical local density results due to Kilpel\"ainen \cite{kilp} and Liebermann \cite{liebe}.
\begin{corollary}[Measure density conditions]\label{densitym}Let $\mu \in \mathcal M(\ern)$ be a measure, $g \in  W^{s,p}_{\loc} (\ern) \cap L^{p-1}_{sp}(\ern)$ and let $-\lp$ be 
 defined in \trif{duality} under assumptions \trif{monophi}-\trif{basicbounds} with $sp < n$. Let $u$ be a {\rm SOLA} to 
\trif{eq} and assume that the following density condition is satisfied for every ball $B_r\subset \ern$ and for a function $h \colon [0, \infty]\to [0, \infty]$:
$$
|\mu| (B_r) \leq h(r)r^{n-sp} \qquad \mbox{where} \qquad \int_0 [h(r)]^{1/(p-1)}\, \frac{dr}{r}< \infty\;.
$$
Then $u$ is continuous in $\Omega$.
\end{corollary}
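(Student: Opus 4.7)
The plan is to reduce the statement directly to the continuity criterion in Theorem \ref{thm:cont}, and for this it suffices to verify that the uniform decay condition \rif{decay} is a consequence of the assumed density bound on $\mu$. The key observation is that the density bound provides a pointwise-uniform majorant of the integrand defining the Wolff potential.

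More precisely, the first step is to insert the assumption $|\mu|(B_\varrho(x_0)) \leq h(\varrho)\varrho^{n-sp}$ into the definition \rif{wolff} of ${\bf W}_{s,p}^{\mu}(x_0,t)$. Since the bound is uniform in $x_0$, this yields, for every $x_0 \in \ern$ and every $t>0$,
\[
{\bf W}_{s,p}^{\mu}(x_0,t) \,=\, \int_0^t \left(\frac{|\mu|(B_\varrho(x_0))}{\varrho^{n-sp}}\right)^{1/(p-1)} \frac{d\varrho}{\varrho} \,\leq\, \int_0^t [h(\varrho)]^{1/(p-1)} \, \frac{d\varrho}{\varrho}\,,
\]
where the right-hand side is independent of $x_0$. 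The integrability assumption $\int_0 [h(\varrho)]^{1/(p-1)}\,d\varrho/\varrho < \infty$ then implies, by absolute continuity of the integral, that the upper bound tends to zero as $t \to 0$.

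In particular, taking the supremum over any $\Omega' \Subset \Omega$ yields
\[
\lim_{t \to 0} \, \sup_{x \in \Omega'} {\bf W}_{s,p}^{\mu}(x,t) \,=\, 0\,,
\]
so the hypothesis \rif{decay} of Theorem \ref{thm:cont} is satisfied on every such $\Omega'$. An application of Theorem \ref{thm:cont} then gives continuity of $u$ on each $\Omega' \Subset \Omega$, and hence on all of $\Omega$.

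There is essentially no obstacle here: the density condition is custom-tailored to majorize the integrand of the Wolff potential by a function of $\varrho$ alone, so that the continuity criterion applies uniformly. The only small subtlety worth remarking on is that the uniformity in $x_0$ of the bound is automatic from the density condition being stated for every ball $B_r \subset \ern$; no covering argument is needed.
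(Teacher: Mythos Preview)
Your proof is correct and follows exactly the approach the paper intends: the corollary is stated as ``completely immediate'' from Theorem~\ref{thm:cont}, and your argument --- bounding the Wolff potential integrand uniformly in $x_0$ by $[h(\varrho)]^{1/(p-1)}$ via the density condition and then invoking the integrability hypothesis to verify \rif{decay} --- is precisely the intended one-line reduction.
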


\subsection{Nonlocal Campanato's theory}\label{intermediatesec} The role of the global excess functional introduced in \rif{eccesso} is to encode the information about the oscillations of solutions, taking simultaneously into account both the behaviour in small balls and the long-range 
interactions. 
Indeed, via this functional, we are able to prove a neat integral decay estimate that incorporates the regularity 
of solutions to homogeneous Dirichlet problems of the type
\eqn{homeqn}
$$
\left\{
\begin{array}{rl}
  -\mathcal{L}_{\Phi} v = 0 & \mbox{in } \; \Omega    \\
  v =  g &  \mbox{on } \, \ern \setminus \Omega \,,
\end{array}
\right.
$$
for some open subset $\Omega \subset \er^n$, 
where $v\in W^{s,p}(\er^n)$ is a distributional solution in the sense of Definition \ref{weak}, for some boundary value $g\in W^{s,p} (\ern)$. The following decay excess result plays in this setting a role similar to the one that in the local  setting is played by the classical estimates
of Campanato \cite{camp}:
\begin{theorem}[Global excess decay] \label{lemma:osc red}
With $p \in (1,\infty)$, $s \in (0,1)$ let $v\in W^{s,p}(\ern)$ be a weak solution of \trif{homeqn} under assumptions \trif{monophi}-\trif{thekernel} in the sense of Definition \ref{weak}. Let $B_R(x_0)\subset \Omega$ and let the global excess functional $E(\cdot)$ be defined as 
in \trif{eccesso} with $q_* = \max\{1,p-1\}$. Then there exist positive constants $\alpha \in (0,  sp/q_*)$, and $c$, both depending only on $n,s,p,\Lambda$, such that the following inequality holds whenever $0<\varrho \leq r \leq R$:
\begin{eqnarray} \label{eq:decay compi}
\nonumber
E(v;x_0,\varrho) & \leq & 
  c  \left(\frac{\varrho}{r}\right)^\alpha \left( \frac{r}{R} \right)^{sp/q_*} E(v;x_0,R) \\  &  &\qquad  +c \left(\frac{\varrho}{r}\right)^\alpha \int_r^{R}  \left(  \frac{r}{t}\right)^{sp/q_*} E(v;x_0,t)  \, \frac{dt}{t} 
\,.
\end{eqnarray}
\end{theorem}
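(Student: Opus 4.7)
The target \eqref{eq:decay compi} has a two-exponent structure: a Campanato-type decay factor $(\varrho/r)^{\alpha}$ attached to the \emph{local} part of $E$, and a tail-scaling factor $(r/R)^{sp/q_*}$ coming from the nonlocal long-range contributions. My plan is to build the estimate from two essentially independent ingredients---a H\"older decay for the local excess and a nonlocal tail-bootstrap---and then assemble them using the hierarchy of scales $\varrho\le r\le R$ together with the strict inequality $\alpha<sp/q_*$. For the first ingredient, starting from the interior H\"older regularity theory for homogeneous weak solutions of $-\lp v=0$, as developed in \cite{DKP1,DKP2} and extendable to $\Phi$ satisfying \eqref{monophi}, I would prove
\[
A(v;x_0,\varrho)\le c\bigl(\varrho/r\bigr)^{\alpha}E(v;x_0,r)\qquad\text{for all } 0<\varrho\le r,
\]
for some $\alpha\in(0,sp/q_*)$. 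The tail component of $E$ necessarily appears on the right-hand side because interior H\"older estimates for nonlocal equations require quantitative control on $v$ outside $B_r(x_0)$; the strict inequality $\alpha<sp/q_*$ is what will enable the geometric absorptions of Step~3.

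The core new ingredient is a tail-bootstrap estimate of the form
\[
{\rm Tail}(v-(v)_{B_\varrho(x_0)};x_0,\varrho)\le c\bigl(\varrho/R\bigr)^{sp/q_*}E(v;x_0,R)+c\int_\varrho^{R}\bigl(\varrho/t\bigr)^{sp/q_*}E(v;x_0,t)\,\frac{dt}{t},
\]
valid for $0<\varrho\le R$. I would prove it by splitting the tail integral into the outer piece $\ern\setminus B_R(x_0)$ and the dyadic annuli $B_{2^{j+1}\varrho}(x_0)\setminus B_{2^{j}\varrho}(x_0)$ that fill $B_R(x_0)\setminus B_\varrho(x_0)$. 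On the outer piece one writes $v-(v)_{B_\varrho(x_0)}=(v-(v)_{B_R(x_0)})+((v)_{B_R(x_0)}-(v)_{B_\varrho(x_0)})$, uses $|y-x_0|\ge R$ to gain the scaling factor $(\varrho/R)^{sp}$, and bounds the resulting average-difference via the change-of-centre and monotonicity properties of $E$ collected in Lemma~\ref{lemma:basic E}. On each dyadic annulus, the $L^{q_*}$-mean of $v-(v)_{B_\varrho(x_0)}$ is bounded by $A(v;x_0,2^{j+1}\varrho)$ plus a telescoping sum $\sum_{i\le j}|(v)_{B_{2^i\varrho}(x_0)}-(v)_{B_{2^{i+1}\varrho}(x_0)}|$; both are controlled by $E$ at the corresponding scale, and the resulting dyadic sum, once replaced by its continuous analogue, yields the integral remainder.

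Finally, since $E(v;x_0,\varrho)\le A(v;x_0,\varrho)+{\rm Tail}(v-(v)_{B_\varrho(x_0)};x_0,\varrho)$, I would apply the local decay of Step~1 between $\varrho$ and $r$ to handle the $A$-term, and the tail-bootstrap between $\varrho$ and $R$ for the tail term. Using $(\varrho/R)^{sp/q_*}=(\varrho/r)^{sp/q_*}(r/R)^{sp/q_*}\le(\varrho/r)^{\alpha}(r/R)^{sp/q_*}$ (where $\alpha<sp/q_*$ is crucial) and splitting $\int_\varrho^{R}=\int_\varrho^{r}+\int_{r}^{R}$, the inner piece over $[\varrho,r]$ is re-absorbed into the leading term via Step~1, while the outer piece over $[r,R]$ produces exactly the integral in \eqref{eq:decay compi}. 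The main obstacle is the tail-bootstrap step: unlike in the purely local setting, the centre of the tail shifts with the scale, so dyadic peeling generates telescoping sums of average differences across every intermediate radius, and these must be absorbed \emph{precisely} by $\int_r^R(r/t)^{sp/q_*}E(v;x_0,t)\,dt/t$ with the correct weight and no logarithmic loss---a balance made possible only by the strict inequality $\alpha<sp/q_*$ and the full strength of Lemma~\ref{lemma:basic E}.
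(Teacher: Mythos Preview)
Your approach is essentially the paper's: Step~1 is Lemma~\ref{locexcessdecay}, Step~2 is Lemma~\ref{lemma:basic E}(3), and Step~3 is the assembly carried out in Section~2.2. Two precision points, though. First, the dyadic argument you describe in Step~2 actually yields $A$ (not $E$) in the integral---and this matters: your Step~3 absorption of the inner piece $\int_\varrho^r(\varrho/t)^{sp/q_*}\,(\cdot)\,dt/t$ via Step~1 only works for $A$, since Step~1 bounds $A(v;t)$ by $c(t/r)^\alpha E(v;r)$; with $E$ in the integrand as you wrote it, the absorption would be circular (you would be assuming the decay of $E$ you are trying to prove). Second, after the absorption you are left with terms of size $(\varrho/r)^\alpha E(v;x_0,r)$, and you still need to convert $E(v;x_0,r)$ into the right-hand side of \eqref{eq:decay compi}; the paper does this via a short case split ($r<R/2$ versus $r\ge R/2$) using Lemma~\ref{lemma:basic E}(2), which you do not mention but is straightforward.
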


\subsection{Reductions, solvability}\label{reduction} 
In several of the following proofs, we can reduce our analysis to the case in which
\eqn{specialp}
$$
\Phi(t)\equiv \Phi_p(t):= |t|^{p-2}t \qquad \mbox{and}\qquad K(x,y)= K(y,x)\,, \qquad \forall \ x,y \in \er^n\,.
$$
Let us explain how to make these reductions. As for the first reduction in \rif{specialp}, 
we define the new kernel $ \overline{K}_{u,K}(x,y)$ by
$$
\overline{K}_{u,\Phi,K}(x,y)  := 
\left\{
\begin{array}{rl}
\displaystyle\frac{\Phi(u(x){-}u(y)) K(x,y) }{|u(x){-}u(y)|^{p-2}(u(x)-u(y))} &  \mbox{if}\  x \not= y 	\ \mbox{and}\  u(x)\not =u(y)\\ [10 pt]
 |x-y|^{-n-ps} &  \mbox{if}\  x \not= y 	\ \mbox{and}\  u(x) =u(y)  \,.
\end{array}
\right.
$$
We also set
\eqn{newkernelu}
$$
\widetilde{K}(x,y) \equiv \widetilde{K}_{u,\Phi,K}(x,y) := \frac12 \left(\overline{K}_{u,\Phi,K}(x,y) + \overline{K}_{u,\Phi,K}(y,x) \right)\,.
$$
Clearly $\widetilde{K}$ is symmetric and by~\rif{thekernel} and~\rif{monophi} it satisfies the growth and ellipticity bounds $ \Lambda^{-2} \leq \widetilde{K}(x,y) |x{-}y|^{n+ps} \leq \Lambda^2$ for $x \not = y$. 
We then observe  that we may rewrite \rif{duality} as
\[
\langle -\mathcal{L}_{\Phi} u, \varphi\rangle  := \int_{\ern}\int_{\ern}   
|u(x){-}u(y)|^{p-2}(u(x){-}u(y)) (\varphi(x)-\varphi(y)) \overline{K}_{u,\Phi,K}(x,y) \, dx \, dy 
\]
for all $\varphi \in C_0^{\infty}(\Omega)$.
Renaming variables we also deduce that 
$$
 \int_{\ern}\int_{\ern} |u(x){-}u(y)|^{p-2}(u(x){-}u(y)) (\varphi(x)-\varphi(y)) \widetilde{K}(x,y) \, dx \, dy = \int_{\ern} \varphi \, d\mu\,,
$$
 and this tells that we may reduce to \rif{specialp} once we already have a solution $u$ to deal with. The only difference is that the new Kernel depends on the solution itself, but this is not a problem, since the only thing we are going to use about the kernels considered in this paper is that they satisfy the growth and ellipticity conditions in \rif{thekernel}. We will use the following abbreviated notation (see \rif{specialp}) 
  \begin{eqnarray}
\nonumber\langle - \widetilde{\mathcal L}_u w, \varphi\rangle &:=  &
\int_{\ern}\int_{\ern}   |w(x){-}w(y)|^{p-2}(w(x){-}w(y)) (\varphi(x)-\varphi(y)) \widetilde{K}(x,y) \, dx \, dy \\
&:=  &
\int_{\ern}\int_{\ern}   \Phi_p(w(x){-}w(y)) (\varphi(x)-\varphi(y)) \widetilde{K}_{u,\Phi,K}(x,y) \, dx \, dy
\,.\label{abbreviated}
\end{eqnarray}
The advantage in using the additional condition \rif{specialp} is for instance that we can then use the estimates developed in~\cite{DKP1,DKP2}.
\begin{remark}[Solvability]\label{solvability} Dirichlet problems as 
$$
\left\{
\begin{array}{rl}
  -\mathcal{L}_{\Phi} v = \mu \in C^{\infty}_0(\Omega) & \mbox{in } \; \Omega    \\
  v =  g \in C^{\infty}_0(\er^n)&  \mbox{on } \, \ern \setminus \Omega \,,
\end{array}
\right.
$$
with $\Phi(\cdot)$ as in \rif{monophi}, 
can be solved by using the classical Direct Methods of the Calculus of Variations in fractional Sobolev spaces. Indeed, denoting $$\mathcal{P} \Phi(t): = \int_0^{|t|} \Phi(\tau) \, d\tau$$ 
the primitive of $\Phi(\cdot)$, we have that by \rif{monophi} a solution $v \in W^{s,p}(\ern)$ can be obtained by solving the minimization problem
$$
v \to \min_{w \in W_g^{s,p}(\ern)} \,  \int_{\ern}\int_{\ern} \mathcal{P} \Phi(w(x)-w(y))K(x,y)\, dx\, dy - \int_{\er^n}  w\mu \, dx 
$$
in the generalised Dirichlet class 
$ W_g^{s,p}(\ern)= \left\{w \in W^{s,p}(\ern) \, :\, w\equiv g \ \mbox{in}\ \ern\setminus \Omega \right\}$. Notice that \rif{monophi} allows to verify the coercivity of the functional in the above display. For more details on these facts we refer to \cite{DKP1}.  
\end{remark}

\subsection{General notation}\label{notationsec} In what follows we denote by $c$ a general positive constant, possibly varying from line to line; special occurrences will be denoted by $c_1, c_2, \bar c_1, \bar c_2$ or the like. All such constants will always be {\em larger or equal than one}; moreover relevant
dependencies on parameters will be emphasized using 
parentheses, i.e., ~$c_{1}\equiv c_1(n,p,s,\Lambda)$ means that $c_1$ depends only on 
$n,p,s,\Lambda$. We denote by $B_r(x_0):=\{x \in \er^n \, : \,  |x-x_0|< r\}$ the open ball with center $x_0$ and radius $r>0$; when not important, or clear from the context, we shall omit denoting the center as follows: $B_r \equiv B_r(x_0)$; moreover, with $B$ being a generic ball with radius $r$ we will denote by $\sigma B$ the ball concentric to $B$ having radius $\sigma r$, $\sigma>0$. Unless otherwise stated, different balls in the same context will have the same center. 
With $\mathcal O \subset \er^{k}$ being a measurable set with respect to a measure w, and with $h$ being a measurable map, we shall denote by $$
   (h)_{\mathcal O} \equiv \mean{\mathcal O}  h \, d{\rm w}  := \frac{1}{{\rm w}(\mathcal O)}\int_{\mathcal O}  h \, d{\rm w}
$$
its integral average. In the case w is the usual Lebesgue measure we denote, in a standard way, w$(\mathcal O)=|\mathcal O|$. As usual, we abbreviate $\essinf \equiv \inf$ and $\esssup \equiv \sup$. 

\section{The global excess and proof of Theorem \ref{lemma:osc red}}\label{eccessosec}
In this section we prove a few fundamental estimates for weak solutions to homogeneous Dirichlet 
problems of the type \rif{homeqn} (for some boundary value $g$) and some basic properties of the global excess functional in \rif{eccesso}.
With these tools available, we then prove Theorem \ref{lemma:osc red}. 
Following the content of Section \ref{reduction}, we shall always assume that the \rif{specialp}, and this will allows us to use the results from \cite{DKP1, DKP2}. The reduction to the case when the identities in \rif{specialp} are in force can be done re-writing  
$ - \mathcal{L}_{\Phi}= -\widetilde{\mathcal L}_v$
as described in \rif{abbreviated}. In particular, \rif{homeqn}$_1$ will be re-written as $\widetilde{\mathcal L}_vv =0$.

\subsection{Preliminary estimates} We start with a few basic results taken from \cite{DKP1}.
\begin{theorem}[Caccioppoli estimate \cite{DKP1}]\label{thm:cacc} Let $p \in (1,\infty)$, $s \in (0,1)$ and
let $v\in W^{s,p}(\ern)$ be a weak solution of \trif{homeqn} under assumptions \trif{monophi}-\trif{thekernel}. Then, for any $B_r\subset \Omega$ and any $\varphi\in C^\infty_0(B_r)$, the following estimate holds:
\begin{eqnarray*}
\nonumber && \int_{B_r}\int_{B_r} \frac{|v(x)\varphi(x){-}v(y)\varphi(y)|^p}{|x{-}y|^{n+sp}} \,dx\, dy\\
 && \nonumber \qquad  \leq c\int_{B_r}\int_{B_r} 
 (\max\{|v(x)|,|v(y)|\})^p \frac{|\varphi(x){-}\varphi(y)|^p}{|x{-}y|^{n+sp}}\, dx\, dy\\
&&\qquad  \quad+\,c \,\int_{B_r}|v(x)||\varphi(x)|^p\,dx \left(\sup_{y\,\in\, {\rm supp}\,\varphi}\int_{\ern\setminus B_r} 
\frac{|v(x)|^{p-1}}{ |x{-}y|^{n+sp}}\,dx \right)\!, 
\end{eqnarray*}
where $c$ depends only on~$p,\Lambda$.
\end{theorem}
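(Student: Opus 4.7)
The plan is to derive the estimate by testing the weak formulation with $\psi = v\varphi^p$ and extracting the desired oscillation $|v(x)\varphi(x){-}v(y)\varphi(y)|^p$ via a pointwise algebraic inequality, with the nonlocal tail producing the second right-hand side term. By the reduction in Section \ref{reduction}, I assume $\Phi = \Phi_p$ and that the kernel is symmetric (writing it as $\widetilde{K}$), so the starting identity is
$$
\int_{\ern}\int_{\ern}|v(x){-}v(y)|^{p-2}(v(x){-}v(y))(\psi(x)-\psi(y))\widetilde{K}(x,y)\,dx\,dy = 0
$$
for admissible $\psi$. Since $v\varphi^p$ is not a priori bounded, admissibility is achieved by a standard truncation: first replace $v$ by $T_M v = \max(-M,\min(M,v))$, derive the estimate in this bounded setting, and then pass $M\to\infty$ via monotone/dominated convergence.

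Next I split the double integral into the interior piece over $B_r \times B_r$ and the two symmetric tail pieces $B_r\times(\ern\setminus B_r)$ and $(\ern\setminus B_r)\times B_r$, which by symmetry of $\widetilde{K}$ give identical contributions. On the interior piece I apply the elementary inequality
$$
|a{-}b|^{p-2}(a{-}b)(a\alpha^p - b\beta^p)\geq c_1 |a\alpha - b\beta|^p - c_2(\max\{|a|,|b|\})^p|\alpha-\beta|^p,\quad a,b\in\er,\ \alpha,\beta\geq 0,
$$
with constants $c_1,c_2$ depending only on $p$, to the integrand with $a=v(x)$, $b=v(y)$, $\alpha=\varphi(x)$, $\beta=\varphi(y)$. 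Combined with $\Lambda^{-2}|x-y|^{-n-sp}\leq \widetilde{K}(x,y)\leq \Lambda^2|x-y|^{-n-sp}$, this bounds the interior piece from below by a multiple of the left-hand side of the claim minus a multiple of the first right-hand side term, with constants depending only on $p,\Lambda$.

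For the tail piece, say $x\in\ern\setminus B_r$ and $y\in B_r$, the vanishing of $\varphi$ outside $B_r$ reduces the integrand to $-|v(x)-v(y)|^{p-2}(v(x)-v(y))v(y)\varphi^p(y)\widetilde{K}(x,y)$. Using the bound $\bigl||v(x){-}v(y)|^{p-2}(v(x){-}v(y))\bigr|\leq 2^{p-1}(|v(x)|^{p-1}+|v(y)|^{p-1})$, the upper bound on $\widetilde{K}$, and pulling out $\sup_{y\in{\rm supp}\,\varphi}$ on the integration against $|v(x)|^{p-1}/|x-y|^{n+sp}$, one produces exactly the second right-hand side term; the remaining $|v(y)|^p\varphi^p(y)$ contribution is absorbed into the first right-hand side term by pointwise enlargement of $\max\{|v(x)|,|v(y)|\}^p|\varphi(x)-\varphi(y)|^p$ near the boundary (or by noting $\int_{\ern\setminus B_r}|x-y|^{-n-sp}dx\lesssim r^{-sp}$ uniformly on ${\rm supp}\,\varphi$). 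The main technical obstacle is the algebraic inequality above: the case $p\geq 2$ follows from convexity, while the singular case $1<p<2$ requires a delicate case split on the signs and relative sizes of $a,b,\alpha,\beta$, since the positive piece can degenerate without the integrand vanishing.
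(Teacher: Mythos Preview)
The paper does not prove this statement; it is imported from \cite{DKP1} and quoted without proof. Your outline follows the argument used there: test with $v\varphi^p$, apply the pointwise algebraic inequality on $B_r\times B_r$, and estimate the nonlocal tail separately. The algebraic inequality you state is precisely the key lemma in \cite{DKP1}, and you are right that the case $1<p<2$ is the delicate part.

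There is, however, a genuine gap in your handling of the tail. The crude bound $\bigl||v(x){-}v(y)|^{p-2}(v(x){-}v(y))\bigr| \leq 2^{p-1}(|v(x)|^{p-1}+|v(y)|^{p-1})$ produces, besides the desired term, a contribution of the form
\[
\int_{B_r}|v(y)|^p\varphi^p(y)\left(\int_{\ern\setminus B_r}\frac{dx}{|x{-}y|^{n+sp}}\right)dy\,.
\]
Neither of your suggested absorptions works with a constant depending only on $p,\Lambda$. The inner integral is \emph{not} bounded by $cr^{-sp}$ uniformly on ${\rm supp}\,\varphi$: it behaves like $\dist(y,\partial B_r)^{-sp}$, so controlling it brings in $\dist({\rm supp}\,\varphi,\partial B_r)$ and hence a dependence on $\varphi$. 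Nor can this term be dominated by the first right-hand side integral, as the example $v\equiv{\rm const}\neq 0$ with $\varphi$ close to the indicator of $B_r$ shows.

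The correct treatment keeps the sign in the tail piece. For $x\in B_r$ and $y\in\ern\setminus B_r$, the integrand $\Phi_p(v(x){-}v(y))\,v(x)\varphi^p(x)\widetilde K(x,y)$ is nonnegative whenever $v(x)(v(x){-}v(y))\geq 0$ and can be discarded in the lower bound. On the complementary set one checks in both sign cases that $|v(x)|<|v(y)|$, hence $|v(x){-}v(y)|\leq 2|v(y)|$, so only the contribution $|v(y)|^{p-1}|v(x)|\varphi^p(x)|x{-}y|^{-n-sp}$ survives. This yields exactly the second right-hand side term with a constant depending only on $p,\Lambda$, and the spurious $|v|^p$ term never appears.
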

\begin{theorem}[Local boundedness \cite{DKP1}]\label{thm:bnd}
Let $p \in (1,\infty)$, $s \in (0,1)$ and let $v\in W^{s,p}(\ern)$ be a weak solution of \trif{homeqn} under assumptions 
\trif{monophi}-\trif{thekernel}. Let $B_r(x_0)\subset \Omega$; then the following estimate holds:
\begin{eqnarray}\label{sup_estimate}
\sup_{B_{r/2}(x_0)} |v| \leq c  \left(\mean{B_r(x_0)} |v|^p\, dx\right)^{1/p}+{\rm Tail}(v;x_0,r/2)\,,
\end{eqnarray}
where ${\rm Tail}(v ;x_0,r/2)$ is defined in~\eqref{tail} and the constant $c$ depends only on $n,s,\Lambda$.
\end{theorem}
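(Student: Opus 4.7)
The plan is to run a Moser-type iteration on $v_+$ (the bound on $v_-$ follows by applying the same argument to $-v$, which solves an equation of the same form, after the reduction \eqref{specialp} of Section~\ref{reduction}). For each level $k \geq 0$ the truncation $(v-k)_+$ is a nonnegative weak subsolution, by monotonicity of $\Phi_p$ and the standard truncation calculation for fractional $p$-Laplacian-type operators; in particular the Caccioppoli estimate of Theorem~\ref{thm:cacc} applies to $(v-k)_+$, with $(v-k)_+$ replacing $v$ on the right-hand side.

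Fix geometric sequences $r_j := r/2 + r/2^{j+1}$ (so $r_j \searrow r/2$, $r_0 = r$) and $k_j := (1-2^{-j})\tilde k$, with $\tilde k > 0$ to be chosen, and set $w_j := (v-k_j)_+$, together with cutoffs $\varphi_j \in C_0^\infty(B_{(r_j+r_{j+1})/2})$ satisfying $\varphi_j \equiv 1$ on $B_{r_{j+1}}$ and $\|\nabla \varphi_j\|_\infty \lesssim 2^j/r$. The Caccioppoli estimate applied to $w_j$, combined with the fractional Sobolev embedding $W^{s,p} \hookrightarrow L^{p^*}$ (taking $p^* = np/(n-sp)$ when $sp<n$, and any fixed $p^* > p$ otherwise) and the elementary inequality $w_j \geq k_{j+1}-k_j$ on $\{w_{j+1}>0\}$, yields a Moser-type iteration inequality of the form
\[
Y_{j+1} \;\leq\; C\, b^j\, \tilde k^{-\delta}\, Y_j^{\kappa} \;+\; C\, b^j\, \tilde k^{-p}\, [{\rm Tail}(v;x_0,r/2)]^p,
\]
where $Y_j := \mean{B_{r_j}} w_j^p\,dx$, $\kappa > 1$, and $b, \delta > 0$ depend only on $n,s,p,\Lambda$. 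The tail contribution arises from the last term of Caccioppoli: splitting $\ern \setminus B_{r_j} = (B_r \setminus B_{r_j}) \cup (\ern \setminus B_r)$ and bounding the integrand over $\ern \setminus B_r$ by a constant multiple of $|v(x)|^{p-1}/|x-x_0|^{n+sp}$ (using $|x-y| \gtrsim |x-x_0|$ there) together with the inequality $k_j \leq \tilde k$ produces the stated remainder.

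The key technical point is the choice of $\tilde k$: one takes
\[
\tilde k \;:=\; \varepsilon^{-1}\Bigl(\mean{B_r} |v|^p\,dx\Bigr)^{1/p} \;+\; {\rm Tail}(v;x_0,r/2),
\]
with $\varepsilon \equiv \varepsilon(n,s,p,\Lambda)$ sufficiently small. This normalisation simultaneously makes $Y_0 \leq \varepsilon^p$ small and reduces the tail remainder to a term controlled by the principal superlinear one. A standard fast-geometric-convergence lemma — if $Y_{j+1} \leq A b^j Y_j^\kappa$ with $Y_0$ below an explicit threshold depending on $A, b, \kappa$, then $Y_j \to 0$ — then forces $Y_\infty = 0$, hence $(v - \tilde k)_+ = 0$ a.e.\ on $B_{r/2}$ and so $\sup_{B_{r/2}} v \leq \tilde k$. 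Applying the same argument to $-v$ produces \eqref{sup_estimate}. The main obstacle is the absorption step: one must guarantee that after normalisation by $\tilde k$ the tail remainder becomes either smaller than the principal superlinear term $\tilde k^{-\delta} Y_j^\kappa$ or gets swallowed into the initial smallness of $Y_0$, and it is precisely the definition of $\tilde k$ above — additively combining the $L^p$-average and the tail — that achieves this.
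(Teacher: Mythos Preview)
The paper does not give its own proof of this statement; Theorem~\ref{thm:bnd} is quoted as a preliminary result from \cite{DKP1}. Your outline is precisely the De Giorgi iteration carried out in \cite{DKP1}: truncation to $(v-k_j)_+$, Caccioppoli plus fractional Sobolev embedding to get a superlinear recursion for the level-set masses, and the crucial choice of $\tilde k$ as the sum of the $L^p$-average and the Tail so that both the initial smallness and the nonlocal remainder are controlled simultaneously. So your approach coincides with the one in the cited reference.

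One minor point to tighten: in the Caccioppoli tail term the integrand is $|w_j(x)|^{p-1}$, and after your split $\ern\setminus B_{r_j}=(B_r\setminus B_{r_j})\cup(\ern\setminus B_r)$ the annulus piece contributes a factor of order $2^{j(n+sp)}$ times $\mean{B_r}w_j^{p-1}$, which must be absorbed into the geometric constant $b^j$; and the outer piece is controlled by $[{\rm Tail}(v_+;x_0,r/2)]^{p-1}$ (not $[{\rm Tail}]^p$) multiplied by $\int_{B_{r_j}} w_j\varphi_j^p$. The resulting recursion is therefore not quite of the additive form you wrote but rather $Y_{j+1}\le C b^j(\tilde k^{-\delta_1}Y_j+\tilde k^{-\delta_2}Y_j^{1/p})^{\kappa}$ for suitable $\delta_1,\delta_2>0$, which still iterates to zero under the same smallness hypothesis on $Y_0$. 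This is a bookkeeping issue, not a conceptual gap.
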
 
The last result we are reporting is concerned with De Giorgi's theory about H\"older continuity of solutions to homogeneous fractional equations. 
\begin{theorem}[H\"older continuity \cite{DKP1}]\label{thm:holder}
Let $p \in (1,\infty)$, $s \in (0,1)$ and let $v\in W^{s,p}(\ern)$ be a weak solution of \trif{homeqn} under assumptions \trif{monophi}-\trif{thekernel}. Let $B_{2r}(x_0)\subset \Omega$; 
then there exists positive constants $\alpha \in (0,  sp/q_*)$ with $q_* = \max\{1,p-1\}$, and $c$, both depending only on $n,s,p,\Lambda$, such that 
\begin{equation*}
\osc_{B_\varrho(x_0)} v \leq c \left(\frac{\varrho}{r}\right)^\alpha \inf_{k \in \er}\left[ \mean{B_{2r}(x_0)} |v-k| \, dx+{\rm Tail}(v-k;x_0,r/2)  \right]
\end{equation*}
holds whenever $\varrho \in (0,r]$.
\end{theorem}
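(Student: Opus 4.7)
The plan is to establish Hölder continuity through De Giorgi's oscillation reduction method, adapted to the nonlocal setting. The starting observation is that the equation is invariant under the replacement $v \mapsto v - k$ for any constant $k \in \er$ (constants are killed by $\mathcal L_\Phi$ when testing with $\varphi$ of compact support), so Caccioppoli (Theorem \ref{thm:cacc}) and local boundedness (Theorem \ref{thm:bnd}) both apply to $v-k$ for arbitrary $k$, as does the Tail invariance $\operatorname{Tail}(v-k;\cdot) = \operatorname{Tail}(v-k;\cdot)$. Writing
\[
\Psi(r) := \inf_{k\in \er}\left[\mean{B_{2r}(x_0)}|v-k|\,dx + \operatorname{Tail}(v-k;x_0,r/2)\right],
\]
the target is a decay $\osc_{B_\varrho} v \leq c (\varrho/r)^\alpha \Psi(r)$.

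The core ingredient is an \emph{oscillation reduction lemma}: there exist $\eta = \eta(n,s,p,\Lambda) \in (0,1)$ and $\sigma \in (0,1/2)$ such that for every ball $B_r$ with $B_{2r}\subset \Omega$,
\[
\osc_{B_{\sigma r}} v \;\leq\; \eta \,\omega(r), \qquad \omega(r):= \osc_{B_r} v + \operatorname{Tail}(v-(v)_{B_r};x_0,r).
\]
To prove this, one may subtract a constant so that $\sup_{B_r} v = -\inf_{B_r} v = \omega(r)/2$; then at least one of the sets $\{\pm v > \omega(r)/4\}\cap B_{r/2}$ has measure at most $|B_{r/2}|/2$. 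Assuming it is the first, De Giorgi's measure-to-pointwise lemma would yield $\sup_{B_{\sigma r}}v \leq (1/2 - \delta)\omega(r)$ for some small $\delta$, immediately reducing oscillation. The measure-to-pointwise step is obtained from the Caccioppoli estimate (Theorem \ref{thm:cacc}) applied to the truncations $(v-\kappa_j)_+$ on shrinking balls with $\kappa_j \nearrow (1/2-\delta)\omega(r)$, combined with the fractional Sobolev embedding $W^{s,p}\hookrightarrow L^{p_s^*}$ and a standard geometric-iteration lemma: the resulting level-set inequality forces $(v-\kappa_\infty)_+\equiv 0$ on $B_{\sigma r}$ provided one starts with a sufficiently small initial density. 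The passage from the intermediate regime where the density of $\{v>\omega(r)/4\}$ is merely less than $|B_{r/2}|/2$ to the very small density needed for De Giorgi's lemma is achieved through a finite iteration using a logarithmic/Caccioppoli argument that estimates the measure of the intermediate level sets $\{\omega(r)/4 < v < (1/2-2^{-j})\omega(r)/2\}$, progressively squeezing the measure down.

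The main obstacle, and the central novelty over the local case, is the treatment of the long-range interactions. Every application of Caccioppoli leaves behind a boundary-type term $\sup_{y\in\spt\varphi}\int_{\ern\setminus B_r}|v|^{p-1}|x-y|^{-n-sp}\,dx$ — exactly the quantity controlled by $\operatorname{Tail}$ — and this term does \emph{not} directly see the truncation level $\kappa_j$. One must therefore carry along the Tail contribution throughout the iteration, using the elementary identity
\[
\int_{\ern\setminus B_r}\frac{|v(y)|^{p-1}}{|y-x_0|^{n+sp}}\,dy \;\lesssim\; r^{-sp}\bigl(\omega(r)^{p-1}+\operatorname{Tail}(v-(v)_{B_r};x_0,r)^{p-1}\bigr),
\]
so that the tail contribution can be absorbed into $\omega(r)^{p-1}$ in the oscillation-reduction step and reproduced at the next scale.

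Once the oscillation reduction $\omega(\sigma r)\le \eta \omega(r)$ (after propagating the Tail correction) is in force, a standard iteration yields $\omega(\sigma^k r)\le \eta^k \omega(r)$, hence $\omega(\varrho)\le c(\varrho/r)^\alpha \omega(r)$ with $\alpha = -\log\eta/\log(1/\sigma)$. Combining with the $L^\infty$ bound \eqref{sup_estimate} from Theorem \ref{thm:bnd} applied to $v-k$ — which converts $\omega(r)$ into $\mean{B_{2r}}|v-k|\,dx + \operatorname{Tail}(v-k;x_0,r/2)$ — and taking infimum over $k\in\er$ produces the stated estimate. A direct inspection of the iteration, which is driven by the fractional Sobolev exponent $p_s^*$ in the De Giorgi step, gives $\alpha < sp/q_*$, which is the expected natural threshold.
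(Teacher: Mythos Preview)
The paper does not prove this statement at all: Theorem~\ref{thm:holder} is quoted from \cite{DKP1}, and the only contribution here is Remark~\ref{remark:k}, which observes that the constant $k$ may be inserted into the right-hand side because $\mathcal{L}_\Phi v = \mathcal{L}_\Phi(v-k)$ in the weak sense (even though $v-k\notin W^{s,p}(\ern)$, the local estimates of \cite{DKP1} still go through; cf.\ Remark~\ref{costanti}). Your proposal goes well beyond this and sketches the actual De Giorgi argument carried out in \cite{DKP1}; in that sense you are reproducing the cited reference rather than the present paper.

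As a sketch of the \cite{DKP1} proof your outline is broadly faithful: Caccioppoli on truncations, a density-improvement (logarithmic) step, a De Giorgi measure-to-pointwise lemma driven by the fractional Sobolev embedding, and an $L^\infty$ bound at the end. The one place where your write-up is imprecise is the iteration: the inequality you state, $\omega(\sigma r)\le \eta\,\omega(r)$ with $\omega(r)=\osc_{B_r}v+\operatorname{Tail}(v-(v)_{B_r};x_0,r)$, does not follow directly from the oscillation reduction $\osc_{B_{\sigma r}}v\le \eta\,\omega(r)$. The tail at the smaller scale picks up the annulus $B_r\setminus B_{\sigma r}$, and controlling it requires tracking oscillation and tail \emph{separately} through the scales, bounding $\operatorname{Tail}$ at level $j$ by a geometrically weighted sum of oscillations at all previous levels before closing the recursion. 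You gesture at this (``after propagating the Tail correction''), but as written the single-quantity iteration is not justified; this is exactly the point where the nonlocal argument in \cite{DKP1} differs nontrivially from the local De Giorgi scheme.
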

\begin{remark} \label{remark:k}
In the statement in~\cite{DKP1} the constant $k$ is not present, but it is easy to check that it is true since $\mathcal{L}_{\Phi} v = \mathcal{L}_{\Phi} (v-k)$ in the weak sense for all $k \in \er$. Naturally $v-k$ does not belong to $W^{s,p}(\ern)$, but nevertheless the proofs from~\cite{DKP1} go through. See also Remark \ref{costanti}. 
\end{remark}
\begin{cor} \label{cor:bnd}
Under the assumptions of Theorem~\ref{thm:bnd}, also 
$$
\sup_{B_{\sigma r}(x_0)} |v| \leq \frac{c}{(1-\sigma)^{\frac{np}{p-1}}} \left[   \mean{B_r(x_0)} |v|\, dx +{\rm Tail}(v;x_0,r/2) \right]
$$
holds whenever $\sigma \in (0,1)$, with $c\equiv c(n,s,p,\Lambda)$.
\end{cor}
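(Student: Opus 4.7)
The plan is to bootstrap the $L^p$-sup estimate of Theorem~\ref{thm:bnd} to an $L^1$-sup estimate by interpolating $L^p$ between $L^\infty$ and $L^1$, absorbing the resulting sup-term through a Young-inequality and hole-filling argument on a family of concentric balls, and handling the nonlocal tail at shifted centers by a comparison with the centered tail at $x_0$. Since $\sup_{B_{\sigma r}}|v|$ is monotone in $\sigma$, it suffices to establish the claim for $\sigma\in[1/2,1)$: the case $\sigma<1/2$ then follows at once from $\sup_{B_{\sigma r}}|v|\leq\sup_{B_{r/2}}|v|$ combined with the bound at $\sigma=1/2$.

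Set $h(t):=\sup_{B_{tr}(x_0)}|v|$ and fix $1/2\leq\rho<R\leq1$. Covering $B_{\rho r}(x_0)$ by finitely many balls $B_{(R-\rho)r/2}(x_j)$ with centers $x_j\in B_{\rho r}(x_0)$ (so that each dilated ball $B_{(R-\rho)r}(x_j)$ lies in $B_{Rr}(x_0)$) and applying Theorem~\ref{thm:bnd} at every center gives
\[
h(\rho)\leq c(R-\rho)^{-n/p}\left(\mean{B_{Rr}(x_0)}|v|^p\,dx\right)^{\!1/p}+\sup_{x\in B_{\rho r}}\tail(v;x,(R-\rho)r/2).
\]
For the $L^p$-term, the pointwise bound $|v|^p\leq h(R)^{p-1}|v|$ on $B_{Rr}(x_0)$ gives $(\mean{B_{Rr}(x_0)}|v|^p\,dx)^{1/p}\leq c\,h(R)^{(p-1)/p}(\mean{B_r(x_0)}|v|\,dx)^{1/p}$, the constant absorbing a harmless power of $R$ (valid since $R\geq1/2$). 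For the shifted tails I split the defining integral at $\partial B_{Rr}(x_0)$: on $\ern\setminus B_{Rr}(x_0)$, the elementary bound $|y-x|\geq((R-\rho)/R)|y-x_0|$ produces a factor $(R/(R-\rho))^{n+sp}$ in front of $\tail(v;x_0,Rr)^{p-1}\leq\tail(v;x_0,r/2)^{p-1}$, while on the inner annulus $B_{Rr}(x_0)\setminus B_{(R-\rho)r/2}(x)$ one bounds $\int_{B_{Rr}}|v|^{p-1}\,dx$ by $h(R)^{p-2}\int_{B_r}|v|\,dx$ when $p\geq2$ and by H\"older's inequality $|B_{Rr}|^{2-p}(\int_{B_r}|v|\,dx)^{p-1}$ when $p<2$. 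After taking $(p-1)$-th roots, an auxiliary Young's inequality trades any residual factor $h(R)^{\gamma}$ (present only when $p\geq2$, with $\gamma<1$) against $\tfrac14 h(R)$ plus an appropriate power of $\mean{B_r(x_0)}|v|\,dx$, and in parallel Young with exponents $p/(p-1)$ and $p$ splits the $L^p$-term as $c(R-\rho)^{-n/p}h(R)^{(p-1)/p}(\mean{B_r(x_0)}|v|\,dx)^{1/p}\leq\tfrac14 h(R)+c(R-\rho)^{-n}\mean{B_r(x_0)}|v|\,dx$. Accumulating all $h(R)$-contributions into $\tfrac12 h(R)$ yields the hole-filling inequality
\[
h(\rho)\leq\tfrac12 h(R)+c(R-\rho)^{-np/(p-1)}\!\left[\mean{B_r(x_0)}|v|\,dx+\tail(v;x_0,r/2)\right],\qquad\tfrac12\leq\rho<R\leq1,
\]
in which the exponent $np/(p-1)=n+n/(p-1)$ is the common upper bound for $n$ (from the $L^p$-interpolation) and $n/(p-1)$ (from the tail comparison).

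The conclusion then follows from the standard iteration lemma stating that any bounded nonnegative $h$ satisfying $h(\rho)\leq\tfrac12 h(R)+A(R-\rho)^{-\beta}$ for every $\sigma\leq\rho<R\leq1$ obeys $h(\sigma)\leq cA(1-\sigma)^{-\beta}$ with $c=c(\beta)$; applied with $\beta=np/(p-1)$ and $A=c[\mean{B_r(x_0)}|v|\,dx+\tail(v;x_0,r/2)]$, it delivers the claimed estimate. The main technical obstacle is the shifted-tail step: the factor $(R-\rho)^{-n/(p-1)}$ produced by the tail comparison cannot be absorbed by any fixed $\epsilon<1$ independent of $R-\rho$, so the iteration has to be driven by the $\tfrac12 h(R)$ arising \emph{jointly} from the two Young steps (one on the $L^p$-term and one on the tail); moreover, the bifurcation between the regimes $p\geq2$ and $p<2$ in the estimate of $\int|v|^{p-1}\,dx$ is ultimately what forces the not-quite-sharp common exponent $np/(p-1)$ in the final inequality, in place of the two separate sharper exponents $n$ and $n/(p-1)$.
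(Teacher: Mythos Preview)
Your proof is correct and follows essentially the same route as the paper: apply the $L^p$-sup bound of Theorem~\ref{thm:bnd} on small balls centered at points of $B_{\rho r}$, compare the shifted tail with the centered one by splitting the integration domain, interpolate $L^p$ between $L^\infty$ and $L^1$ via $|v|^p\le(\sup|v|)^{p-1}|v|$, use Young's inequality to produce a $\tfrac12\sup_{B_{Rr}}|v|$ term, and conclude by the standard iteration lemma. The paper's version is slightly more streamlined in that it first absorbs the inner-annulus tail contribution into the $(\mean|v|^p)^{1/p}$ term and then performs a single interpolation/Young step, whereas you carry out two separate Young steps; also note that the correct factor arising from the outer tail comparison is $(R/(R-\rho))^{n}$ rather than $(R/(R-\rho))^{n+sp}$ (the $sp$ part cancels against the tail prefactor $((R-\rho)r/2)^{sp}$), which is why your closing sentence correctly records $n/(p-1)$ as the tail exponent.
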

\begin{proof} It is of course enough to consider the case when $\sigma\geq 1/2$. In this case, let us consider numbers $ \sigma \leq t < \gamma \leq  1$ and point $z \in B_{tr}(x_0)$. Applying Theorem \ref{thm:bnd} with the choice $B_r\equiv B_{(\gamma-t)r/100}(z)$, we gain 
\eqn{suppi1}
$$
|v(z)| \leq \frac{c}{(\gamma-t)^{n/p}}  \left(\mean{B_{sr}(x_0)} |v|^p\, dx\right)^{1/p}+{\rm Tail}(v;z,(\gamma-t)r/200)\,. 
$$
We have of course used that $B_{(\gamma-t)r/100}(z) \subset B_{\gamma r}(x_0)$ whenever $z \in B_{tr}(x_0)$. The Tail term in \rif{suppi1} can be estimated, by splitting the integration domain of the corresponding integral in the sets $B_{r/2}(x_0)\setminus B_{(\gamma-t)r/200}(z) $ 
and $\ern\setminus (B_{r/2}(x_0)\cup B_{(\gamma-t)r/200}(z))$, as follows: 
\begin{eqnarray}
\nonumber &&{\rm Tail}(v;z,(\gamma-t)r/200)\leq  \frac{c}{(\gamma-t)^{\frac{n}{p-1}}}\left(\mean{B_{r/2}(x_0)} |v|^{p-1}\, dx\right)^{1/(p-1)}\\ &&\hspace{2cm}  + \nonumber 
 \left[ \frac{r^{sp}}{(\gamma-t)^{n}} \int_{\ern\setminus (B_{r/2}(x_0)\cup B_{(\gamma-t)r/200}(z))} \frac{|v(x)|^{p-1}}{ |x{-}x_0|^{n+sp}}\,d x \right]^{1/(p-1)}\\
&& \qquad  \qquad \qquad  \quad \qquad\leq  \frac{c}{(\gamma-t)^{\frac{n}{p-1}}} \left[\left(\mean{B_{\gamma r}(x_0)} |v|^{p}\, dx\right)^{1/p}+ \tail(v;x_0,r/2)\right]\,.\label{stimatail}
\end{eqnarray}
Notice that we have used the elementary estimate 
$$\frac{|x{-}x_0|^{n+sp}}{|x-z|^{n+sp}}\leq \frac{c(n,p)}{(\gamma-t)^{n+sp}}$$ for $x \not \in B_{(\gamma-t)r/200}(z)$. 
Using \rif{suppi1} and \rif{stimatail} we get
\begin{equation} \  \nonumber
	\sup_{B_{tr}(x_0)}| v | \leq \frac{c}{(\gamma-t)^{\frac{n}{p-1}}} \left[  
	 \left(\mean{B_{\gamma r}(x_0)} |v|^p\, dx\right)^{1/p}+ {\rm Tail}(v;x_0,r/2) \right]\,,
\end{equation}
which is valid whenever $1/2 \leq t< \gamma \leq 1$. Extracting $\sup v$ from the last integral on the right hand side and appealing to Young's inequality, we arrive at
$$
\sup_{B_{ tr}(x_0)}|v|  \leq  \frac12 \sup_{B_{ \gamma r}(x_0)}|v|
+
\frac{c}{(\gamma-t)^{\frac{np}{p-1}}} \left[  \mean{B_{r}(x_0)} |v|\, dx +   {\rm Tail}(v;x_0, r/2) \right]\,.
$$
A standard iteration argument, see e.g.~\cite{giusti}, then finishes the proof.
\end{proof}
The same argument applies to subsolutions, and we report the statement for completeness. 
\begin{lemma} \label{lemma:sub bnd}
Let $p \in (1,\infty)$, $s \in (0,1)$ and let $w\in W^{s,p}(\ern)$ be a weak subsolution of \trif{homeqn} under assumptions \trif{monophi}-\trif{thekernel}. Let $B_r\equiv B_r(x_0)\subset \Omega$; then 
$$
\sup_{B_{\sigma r}(x_0)} w_+ \leq  \frac{c}{(1-\sigma)^{\frac{np}{p-1}}} \left[  \mean{B_r(x_0)} w_+ \, dx + {\rm Tail}(w_+;x_0,r/2) \right]
$$
holds whenever $\sigma \in (0,1)$, with $c\equiv c(n,s,p,\Lambda)$. 
\end{lemma}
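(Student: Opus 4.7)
The plan is to mimic exactly the proof of Corollary~\ref{cor:bnd}, replacing $|v|$ everywhere by $w_+$ and using the subsolution analog of Theorem~\ref{thm:bnd} as the starting brick. The latter is the standard De Giorgi-type $L^p$-sup estimate for subsolutions from~\cite{DKP1}, namely
\[
\sup_{B_{r/2}(x_0)} w_+ \leq c \left(\mean{B_r(x_0)} w_+^p \, dx \right)^{1/p} + {\rm Tail}(w_+;x_0,r/2)\,,
\]
which is proved there by truncating at positive levels $k\ge 0$ (so that $(w-k)_+$ is again a subsolution) and iterating. I will freely use this as the counterpart of Theorem~\ref{thm:bnd} for subsolutions.

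First I would fix $1/2\le\sigma\le t<\gamma\le 1$ and a point $z\in B_{tr}(x_0)$, and apply the above sup-estimate on the ball $B_{(\gamma-t)r/100}(z)\subset B_{\gamma r}(x_0)$, obtaining
\[
w_+(z) \leq \frac{c}{(\gamma-t)^{n/p}}\left(\mean{B_{\gamma r}(x_0)} w_+^p\, dx\right)^{1/p} + {\rm Tail}(w_+;z,(\gamma-t)r/200)\,.
\]
Then, exactly as in~\eqref{stimatail}, I would split the integration domain in the Tail at $z$ into $B_{r/2}(x_0)\setminus B_{(\gamma-t)r/200}(z)$ and $\ern\setminus(B_{r/2}(x_0)\cup B_{(\gamma-t)r/200}(z))$, use the elementary comparison $|x-x_0|^{n+sp}/|x-z|^{n+sp}\le c(n,p)(\gamma-t)^{-(n+sp)}$ for $x\notin B_{(\gamma-t)r/200}(z)$ on the outer piece, and bound the inner piece by a ball average of $w_+^{p-1}$ on $B_{r/2}(x_0)$. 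This yields
\[
{\rm Tail}(w_+;z,(\gamma-t)r/200) \leq \frac{c}{(\gamma-t)^{n/(p-1)}}\left[\left(\mean{B_{\gamma r}(x_0)} w_+^p\, dx\right)^{1/p} + {\rm Tail}(w_+;x_0,r/2)\right]\,.
\]

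Taking $\sup_{z\in B_{tr}(x_0)}$ of the two resulting displays, I would arrive at
\[
\sup_{B_{tr}(x_0)} w_+ \leq \frac{c}{(\gamma-t)^{n/(p-1)}}\left[\left(\mean{B_{\gamma r}(x_0)} w_+^p\, dx\right)^{1/p} + {\rm Tail}(w_+;x_0,r/2)\right]\,.
\]
Now I would extract one power of $\sup_{B_{\gamma r}} w_+$ from the $L^p$-mean and apply Young's inequality (with exponents $p$ and $p/(p-1)$) to absorb it; this produces
\[
\sup_{B_{tr}(x_0)} w_+ \leq \tfrac12 \sup_{B_{\gamma r}(x_0)} w_+ + \frac{c}{(\gamma-t)^{np/(p-1)}}\left[\mean{B_r(x_0)} w_+\, dx + {\rm Tail}(w_+;x_0,r/2)\right]\,.
\]
A standard iteration lemma (see~\cite{giusti}) applied to the nondecreasing function $t\mapsto \sup_{B_{tr}(x_0)} w_+$ on $[\sigma,1]$ then closes the argument and gives the claim.

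The only genuinely delicate point is the splitting of the Tail at $z$ into a Tail at $x_0$ plus an interior average, because $z$ varies over $B_{tr}(x_0)$; this is handled by the geometric comparison of $|x-x_0|$ with $|x-z|$ outside the ball of radius $(\gamma-t)r/200$ centered at $z$, exactly as in \eqref{stimatail}. Everything else is a direct transcription of the proof of Corollary~\ref{cor:bnd}, noting that the truncation-subsolution trick (Remark~\ref{remark:k}) and the sub-level inequality behave well under taking the positive part.
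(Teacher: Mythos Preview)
Your proposal is correct and is exactly the argument the paper intends: the paper does not give a separate proof but simply remarks that ``the same argument applies to subsolutions,'' referring to the proof of Corollary~\ref{cor:bnd}, and your write-up is precisely that argument with $|v|$ replaced by $w_+$ and Theorem~\ref{thm:bnd} replaced by its subsolution analog from~\cite{DKP1}.
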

Combining Theorem~\ref{thm:cacc} and Corollary~\ref{cor:bnd} yields the following improved Caccioppoli type estimate.
\begin{lemma} \label{lemma:cacc improved}
Under the assumptions of Theorem~\ref{thm:bnd}, for any $B_{r}\equiv B_{r}(x_0)\subset \Omega$ the following estimate:
$$
 \int_{B_{\sigma r}}\mean{B_{\sigma r}} \frac{|v(x){-}v(y)|^p}{|x{-}y|^{n+sp}} \,dx\, dy \leq \frac{c}{(1-\sigma)^\theta r^{sp}}\left[
  \mean{B_r} |v|\, dx + {\rm Tail}(v;x_0,r/2)\right]^{p}
$$
holds whenever $\sigma \in [1/2,1)$, with $c\equiv c(n,s,p,\Lambda)$ and $\theta \equiv 
\theta(n,p)$.
\end{lemma}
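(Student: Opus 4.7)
The plan is to apply the Caccioppoli estimate of Theorem~\ref{thm:cacc} to a suitable cutoff and then turn the right-hand side into a combination of $\mean{B_r}|v|\,dx$ and $\tail(v;x_0,r/2)$ by means of the sup bound of Corollary~\ref{cor:bnd}. Concretely, I would fix intermediate radii $\tau_1 := (3\sigma+1)/4$ and $\tau_2 := (1+\sigma)/2$, so that $\sigma < \tau_1 < \tau_2 < 1$ with $\tau_1-\sigma = \tau_2-\tau_1 = 1-\tau_2 = (1-\sigma)/4$, and pick $\varphi \in C_0^{\infty}(B_{\tau_1 r}(x_0))$ with $\varphi \equiv 1$ on $B_{\sigma r}(x_0)$, $0 \leq \varphi \leq 1$ and $|\nabla\varphi| \leq c/((1-\sigma)r)$. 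Applying Theorem~\ref{thm:cacc} on $B_{\tau_2 r}(x_0)$ with this $\varphi$, the left-hand side dominates $\int\!\int_{B_{\sigma r}\times B_{\sigma r}} |v(x)-v(y)|^p/|x-y|^{n+sp}\,dx\,dy$ since $\varphi \equiv 1$ on $B_{\sigma r}$, so it suffices to bound the two terms $I$ (the Gagliardo-type integral involving $\max\{|v(x)|,|v(y)|\}$) and $II$ (the product of $\int |v||\varphi|^p\,dx$ with a nonlocal sup-integral) on the right-hand side of the Caccioppoli estimate.

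For $I$, I would pull out $(\sup_{B_{\tau_2 r}}|v|)^p$ and control the remaining seminorm of $\varphi$ by splitting the integration at $|x-y| = (1-\sigma)r$ and using $|\varphi(x)-\varphi(y)| \leq \min\{2,|x-y|/((1-\sigma)r)\}$; the condition $s<1$ ensures integrability near the diagonal, yielding a bound of order $c\,r^{n-sp}/(1-\sigma)^{sp}$. The supremum itself is then estimated via Corollary~\ref{cor:bnd} applied with inner radius $\tau_2 r$: since $1-\tau_2 = (1-\sigma)/2$, this produces the desired ingredients $\mean{B_r}|v|\,dx + \tail(v;x_0,r/2)$, at the price of an extra factor $(1-\sigma)^{-np/(p-1)}$. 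The prefactor of $II$ is trivially bounded by $\int_{B_{\tau_2 r}}|v||\varphi|^p\,dx \leq c\,r^n \mean{B_r}|v|\,dx$.

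The main obstacle is the nonlocal factor of $II$, namely estimating $\sup_{y \in \textnormal{supp}\,\varphi}\int_{\ern \setminus B_{\tau_2 r}(x_0)} |v(x)|^{p-1}/|x-y|^{n+sp}\,dx$ by a constant multiple of $[\tail(v;x_0,r/2)]^{p-1}/r^{sp}$: the kernel is centered at $y$, whereas the tail is anchored at $x_0$. The key observation is that $|x-y| \geq c(1-\sigma)|x-x_0|$ holds uniformly for $y \in B_{\tau_1 r}(x_0)$ and $x \notin B_{\tau_2 r}(x_0)$, as one sees by distinguishing $|x-x_0| \geq 2\tau_1 r$ (where $|x-y| \geq |x-x_0|/2$) from $\tau_2 r \leq |x-x_0| < 2\tau_1 r$ (where $|x-y| \geq (1-\sigma)r/4$ and $|x-x_0|$ is comparable to $r$). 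Combined with the inclusion $\ern \setminus B_{\tau_2 r}(x_0) \subset \ern \setminus B_{r/2}(x_0)$ (valid since $\tau_2 \geq 3/4$), this bounds the supremum by $c(1-\sigma)^{-(n+sp)} r^{-sp}[\tail(v;x_0,r/2)]^{p-1}$. Applying Young's inequality $ab^{p-1} \leq (a+b)^p$ to collapse the two factors of $II$ into $[\mean{B_r}|v|\,dx + \tail(v;x_0,r/2)]^p$, adding the analogous bound for $I$, dividing by $|B_{\sigma r}| \asymp r^n$ (legitimate since $\sigma \geq 1/2$), and absorbing all powers of $(1-\sigma)$ into a single exponent $\theta = \theta(n,p)$ then yields the stated estimate.
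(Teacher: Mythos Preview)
Your proof is correct and follows essentially the same strategy as the paper's: apply the Caccioppoli estimate of Theorem~\ref{thm:cacc} with a cutoff supported in an intermediate ball, bound the Gagliardo term by pulling out $\sup|v|$, handle the nonlocal term via the geometric inequality $|x-y|\geq c(1-\sigma)|x-x_0|$ for $y$ in the support of $\varphi$ and $x$ outside the larger ball, and then convert $\sup|v|$ into $\mean{B_r}|v|\,dx + \tail(v;x_0,r/2)$ by Corollary~\ref{cor:bnd}. The only cosmetic difference is that the paper uses a single intermediate radius $(1+\sigma)/2$ (applying Caccioppoli on $B_r$ itself), whereas you introduce two radii $\tau_1<\tau_2$ and apply Caccioppoli on $B_{\tau_2 r}$; this does not change the argument.
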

\begin{proof}
Let $\varphi \in C_0^\infty(B_{(1+\sigma)r/2})$ such that $0\leq \varphi \leq 1$, $\varphi \equiv  1$ on $B_{\sigma r}$ and $|D\varphi| \leq c/[r(1-\sigma)]$. Then Theorem~\ref{thm:cacc} implies, also by using Young's inequality, that
\begin{eqnarray}\label{eq:cacc2}
&&\int_{B_{\sigma r }}\mean{B_{\sigma r}} \frac{|v(x){-}v(y)|^p}{|x{-}y|^{n+sp}} \,dx\, dy
 \nonumber   \leq  \frac{c}{[(1-\sigma)r]^{sp}}\mean{B_{(1+\sigma)r/2}} |v|^p \, dx\\
&&\nonumber \qquad  \qquad \qquad+\, \frac{c}{(1-\sigma)^{n+sp}r^{sp}} \mean{B_{(1+\sigma)r/2}} |v| \,dx \left(r^{sp}\int_{\ern\setminus B_{r}} \frac{|v(x)|^{p-1}}{ |x{-}x_0|^{n+sp}}\,dx \right)\\
&&  \nonumber  \qquad  \qquad\leq  \frac{c}{(1-\sigma)^{n+p}r^{sp}}\left\{\sup_{B_{(1+\sigma)r/2}} |v|^p + 
 [{\rm Tail}(v;x_0,r/2)]^{p}\right\}\label{obvious}
\end{eqnarray}
holds. Observe that we have used the obvious inequality
$$
\frac{|x{-}x_0|}{|x{-}y|} \leq \frac{c}{(1-\sigma)}\;,
$$
which is valid for whenever $x \in \ern\setminus B_r(x_0)$ and $y \in B_{(1+\sigma)r/2}$. We continue using Corollary \ref{cor:bnd} that gives
$$
\sup_{B_{(1+\sigma)r/2}(x_0)} |v|^p \leq \frac{c}{(1-\sigma)^{\frac{np}{p-1}}} 
\left[   \mean{B_r(x_0)} |v|\, dx +{\rm Tail}(v;x_0,r/2) \right]^p\;.
$$
Connecting the last inequality with \rif{obvious} finally yields the assertion. 
\end{proof}
In the subsequent lemma we lower the exponents appearing in Lemma \ref{lemma:cacc improved}. 
\begin{lemma} \label{lemma:cacc improved 2}
Under the assumptions of Theorem~\ref{thm:bnd}, for any $B_{r}\equiv B_{r}(x_0)\subset \Omega$ the following estimate:
$$ \int_{B_{\sigma r}}\mean{B_{\sigma r}} \frac{|v(x){-}v(y)|^q}{|x{-}y|^{n+hq}} \,dx\, dy  \leq
\frac{ c}{(1-\sigma)^{\theta} r^{hq}} \left[  \mean{B_r} |v|\, dx+{\rm Tail}(v;x_0,r/2)  \right]^{q}
$$
holds whenever $q \in [1,p]$, $h \in (0,s)$, and $\sigma \in [1/2,1)$, with $c \equiv c(n,s,p,\Lambda,s-h)$ and with $\theta \equiv \theta(n,p)$. 
\end{lemma}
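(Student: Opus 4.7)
\medskip

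\noindent\textbf{Proof proposal.} The plan is to deduce the statement directly from Lemma~\ref{lemma:cacc improved} by H\"older's inequality, treating separately the trivial case $q=p$ (where only the exponent $h$ needs to be lowered from $s$) and the genuinely fractional case $q<p$ (where both exponents must be lowered simultaneously).

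First I would dispose of the case $q=p$. Here I would use simply that $|x-y|\leq 2\sigma r \leq 2r$ on $B_{\sigma r}\times B_{\sigma r}$, so that
$$
\frac{1}{|x-y|^{n+hp}}=\frac{|x-y|^{(s-h)p}}{|x-y|^{n+sp}}\leq \frac{(2r)^{(s-h)p}}{|x-y|^{n+sp}}\,,
$$
and plugging this into the integral and applying Lemma~\ref{lemma:cacc improved} yields the claim with $\theta$ and $c$ as in that lemma.

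For the main case $q\in[1,p)$ I would write
$$
\frac{|v(x)-v(y)|^q}{|x-y|^{n+hq}}=\frac{|v(x)-v(y)|^q}{|x-y|^{(n+sp)q/p}}\cdot\frac{1}{|x-y|^{n+hq-(n+sp)q/p}}
$$
and apply H\"older's inequality on $B_{\sigma r}\times B_{\sigma r}$ with conjugate exponents $p/q$ and $p/(p-q)$. The first factor becomes a $q/p$-power of the $W^{s,p}$-seminorm which is controlled by Lemma~\ref{lemma:cacc improved}. The exponent appearing in the second factor works out to
$$
\bigl(n+hq-(n+sp)q/p\bigr)\cdot\frac{p}{p-q}=n-\frac{pq(s-h)}{p-q}<n\,,
$$
so that $\int_{B_{\sigma r}}\int_{B_{\sigma r}}|x-y|^{-(\cdot)}\,dx\,dy$ is bounded by $c(n,p,s-h)\,(\sigma r)^{n+pq(s-h)/(p-q)}$. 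Raising to the power $(p-q)/p$ and collecting all factors, the powers of $r$ combine into $r^{-sq}\cdot r^{q(s-h)}=r^{-hq}$, while the volume factors telescope into a single $|B_{\sigma r}|$ that is absorbed when we pass from the full double integral to the ``mean inside'' integral present in the statement.

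The only bookkeeping subtlety will be tracking the dependence on $1-\sigma$: the H\"older step itself introduces no singularity in $1-\sigma$, so the blow-up is inherited solely from the $q/p$-power of the $(1-\sigma)^{-\theta}$ factor provided by Lemma~\ref{lemma:cacc improved}. Accordingly the final constant $\theta$ can be chosen as $\theta(n,p)$ (independent of $q$ and $h$), while the constant $c$ picks up a dependence on $s-h$ through the second H\"older factor, matching exactly the dependencies asserted in the statement.
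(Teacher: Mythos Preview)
Your proposal is correct and follows essentially the same route as the paper's proof: both treat $q=p$ by the trivial bound $|x-y|^{(s-h)p}\leq (2r)^{(s-h)p}$ and then invoke Lemma~\ref{lemma:cacc improved}, and for $q<p$ both apply H\"older's inequality with exponents $p/q$ and $p/(p-q)$ to isolate the $W^{s,p}$-seminorm (controlled by Lemma~\ref{lemma:cacc improved}) and a remainder integral with singularity exponent $n-\frac{pq(s-h)}{p-q}<n$. The only cosmetic difference is that the paper organizes the H\"older split relative to the measure $|x-y|^{-n}\,dx\,dy$, whereas you split the full integrand against Lebesgue measure; the resulting factors are identical.
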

\begin{proof}
We can assume $1\leq q < p$, otherwise when $q=p$ the result is in Lemma \ref{lemma:cacc improved}, but see also the computations below. By H\"older's inequality and the fact that $h < s$ we get
\begin{eqnarray} \  
\nonumber &&  \int_{B_{\sigma r}}\mean{B_{\sigma r}} \frac{|v(x){-}v(y)|^q}{|x{-}y|^{n+hq}} \,dx\, dy 
=\int_{B_{\sigma r}}\mean{B_{\sigma r}} \frac{|v(x){-}v(y)|^q}{|x{-}y|^{s q}} |x{-}y|^{(s-h)q}\, \frac{dx\, dy}{|x{-}y|^n}
\\ \nonumber &&  \qquad \leq \left( \int_{B_{\sigma r}}\mean{B_{\sigma r}} \frac{|v(x){-}v(y)|^p}{|x{-}y|^{n+sp}} \,dx\, dy \right)^{q/p}
\\ \nonumber &&  \qquad \qquad \cdot \left( \int_{B_{\sigma r}}\mean{B_{\sigma r}} |x{-}y|^{ -n + (s-h)qp/(p-q)} \,dx\, dy \right)^{(p-q)/p}
\\ \nonumber &&  \qquad \leq \frac{c r^{(s-h)q}}{(s-h)^q} \left( \int_{B_{\sigma r}}\mean{B_{\sigma r}} \frac{|v(x){-}v(y)|^p}{|x{-}y|^{n+sp}} \,dx\, dy \right)^{q/p}
\end{eqnarray}
and the assertion follows by using Lemma~\ref{lemma:cacc improved}. 
\end{proof}
Next we collect a few results about basic properties of global and local excess functionals. 
\begin{lemma}[Structure properties of the global excess functional] \label{lemma:basic E}
Let $p \in (1,\infty)$, $s \in (0,1)$, $q_* := \max\{1,p-1\}$, $B_r \equiv B_r(x_0)\subset \er^n, \ r \in (0,R]$ and $f \in L_{\rm loc}^{q_*}(\ern) \cap L_{sp}^{p-1}(\ern)$. 
Then there exists a constant $c\equiv c(n,s,p)$ such that the following properties hold
\begin{eqnarray} \  
\nonumber {\rm (1)} & & E(f+g; x_0,r) \leq c E(f; x_0,r)+ c E(g; x_0,r)\,,\ \ \forall\,  r >0
\\ \nonumber {\rm (2)} & & A(f; x_0,r) \leq c A(f; x_0,t)\,, \quad E(f; x_0,r) \leq c E(f; x_0,t)\\ 
 \nonumber && \mbox{whenever }\  R/4 \leq r \leq t \leq R
\\ \nonumber {\rm (3)} & & {\rm Tail}(f-(f)_{B_{r}};x_0,r) \leq  c \left( \frac{r}{R} \right)^{sp/q_*} E(f; x_0,R) \\
\nonumber && \hspace{35mm}+ c\int_r^{R}  \left(  \frac{r}{t}\right)^{sp/q*}A(f;x_0,t)  \, \frac{dt}{t}\,, \ \mbox{for every}\  r \in (0, R]\\ \nonumber {\rm (4)} & & E(f; x_0,\sigma r)\leq c(\sigma) E(f; x_0,r)\,,\  \mbox{for every}\  r \in (0, R] \ \mbox{and}\ \sigma \in (0,1).
\end{eqnarray}
\end{lemma}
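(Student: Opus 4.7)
All four properties are purely real-variable statements that do not invoke the PDE. The plan is to verify each by triangle inequalities in $L^{q_*}$, dyadic decompositions of $\ern\setminus B_r$, and Jensen in several forms, with a case split between $p\geq 2$ (where $q_*=p-1$) and $1<p<2$ (where $q_*=1$) whenever the sign of $p-2$ matters.

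Items (1) and (2) are short. For (1), the local part of $E$ is subadditive by the triangle inequality in $L^{q_*}$, while for the Tail I write $(f+g)-(f+g)_{B_r}=(f-(f)_{B_r})+(g-(g)_{B_r})$ and combine the elementary inequalities $(a+b)^{p-1}\leq c(a^{p-1}+b^{p-1})$ and $(a+b)^{1/(p-1)}\leq c(a^{1/(p-1)}+b^{1/(p-1)})$. For (2), the $A$ case follows from $A(f;x_0,r)^{q_*}\leq \mean{B_r}|f-(f)_{B_t}|^{q_*}\,dx\leq (t/r)^n A(f;x_0,t)^{q_*}$ combined with $t/r\leq 4$. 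For the $E$ case I additionally split $\ern\setminus B_r=(\ern\setminus B_t)\cup(B_t\setminus B_r)$: the outer piece scales down by a factor $(r/t)^{sp}$, while the thin annulus is absorbed into $A(f;x_0,t)^{p-1}$ after a Jensen step ($L^{q_*}$ controls $L^{p-1}$), and the shift $|(f)_{B_r}-(f)_{B_t}|$ is controlled by $A(f;x_0,t)$ as in the $A$ case.

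The heart of the lemma is (3). I would decompose $\ern\setminus B_r=(\ern\setminus B_R)\cup\bigcup_{j=0}^{J-1}(B_{r_{j+1}}\setminus B_{r_j})$ with $r_j=2^j r$ and $r_J\approx R$. On $\ern\setminus B_R$, rescaling plus $(a+b)^{p-1}\leq c(a^{p-1}+b^{p-1})$ give $(r/R)^{sp}[\tail(f-(f)_{B_R};x_0,R)^{p-1}+|(f)_{B_R}-(f)_{B_r}|^{p-1}]$. On each dyadic annulus, $|x-x_0|\geq r_j$ together with
$|f-(f)_{B_r}|^{p-1}\leq c|f-(f)_{B_{r_{j+1}}}|^{p-1}+c|(f)_{B_{r_{j+1}}}-(f)_{B_r}|^{p-1}$
and a Jensen step on the average of $|f-(f)_{B_{r_{j+1}}}|^{p-1}$ yield $c\,2^{-jsp}\bigl[A(f;x_0,r_{j+1})^{p-1}+\bigl(\sum_{k\leq j}A(f;x_0,r_{k+1})\bigr)^{p-1}\bigr]$, the inner telescoped sum coming from the standard dyadic bound $|(f)_{B_{r_{j+1}}}-(f)_{B_r}|\leq c\sum_{k\leq j}A(f;x_0,r_{k+1})$. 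Here the two ranges of $p$ split. For $p\geq 2$ the $(p-1)$-th root is subadditive ($1/(p-1)\leq 1$), so I would distribute it over the whole bound, pick up the factor $2^{-jsp/(p-1)}=(r/r_j)^{sp/q_*}$, and swap Fubini on the telescoped double sum to collapse it into $c\sum_k(r/r_{k+1})^{sp/q_*}A(f;x_0,r_{k+1})$, equivalent by (2) to the integral $c\int_r^R(r/t)^{sp/q_*}A(f;x_0,t)\,dt/t$. For $1<p<2$ distributing the root over an infinite sum fails ($\ell^{p-1}$ is not controlled by $\ell^1$), so I would reverse the order: first apply $(a+b)^{p-1}\leq a^{p-1}+b^{p-1}$ (subadditivity for $p-1\leq 1$) to distribute the telescoped sum, swap Fubini, discretize via (2) to the integral $c\int_r^R(r/t)^{sp}A(f;x_0,t)^{p-1}\,dt/t$, and only at the end apply Jensen for the concave $t\mapsto t^{p-1}$ with respect to the finite measure $(r/t)^{sp}\,dt/t$ on $[r,R]$, whose total mass is bounded by $1/(sp)$. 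This converts the last integral into $c\int_r^R(r/t)^{sp}A(f;x_0,t)\,dt/t$, matching $sp/q_*=sp$ since $q_*=1$.

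Property (4) is a corollary of (3) and (2): applying (3) with $\sigma r$ in place of $r$ and $r$ in place of $R$ gives the boundary contribution $c\sigma^{sp/q_*}E(f;x_0,r)$, and the integral over $[\sigma r,r]$ is controlled by $A(f;x_0,r)$ via (2) on each dyadic subinterval (iterating if $\sigma<1/4$). The main obstacle will be the asymmetry in (3): the natural dyadic bookkeeping produces an $\ell^{p-1}$-quasinorm on the telescoped shifts that for $p<2$ cannot be controlled by an $\ell^1$-norm with a universal constant, and the correct remedy is to defer the $(p-1)$-th root until after integration, where a single Jensen step closes the estimate at the exact exponent $sp/q_*$.
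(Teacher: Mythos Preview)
Your proposal is correct and follows essentially the same route as the paper: properties (1), (2), (4) are handled by the same triangle/best-constant/enlarging-ball manipulations, and the core property (3) is obtained by the same dyadic decomposition of $B_R\setminus B_r$, telescoping of the mean shifts $|(f)_{B_{r_{j+1}}}-(f)_{B_r}|$, and a Fubini swap on the resulting double sum.

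The only organizational difference worth noting is in the $p<2$ case of (3). The paper applies Jensen \emph{once, upfront}: with the weighted measure $d{\rm w}(x)=|x-x_0|^{-n-sp}dx$ on $B_R\setminus B_r$ it converts the $L^{p-1}$ expression defining $T_2$ into an $L^1$ expression, so that the entire dyadic argument then runs uniformly with the exponent $q_*$ (and the ``pull the root inside the sum'' step uses concavity of $t\mapsto t^{1/q_*}$, which is trivial when $q_*=1$). You instead keep the $p-1$ exponent through the dyadic/Fubini steps and apply Jensen \emph{at the end}, on the scale integral $\int_r^R (r/t)^{sp}A(f;t)^{p-1}\,dt/t$ against the finite measure $(r/t)^{sp}\,dt/t$. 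Both orderings are valid; the paper's version has the advantage of treating $p\gtrless 2$ in a single computation via $q_*$, while yours makes the $p<2$ mechanism more explicit.
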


\begin{proof} In what follows all the balls considered will have $x_0$ as center. Property $(1)$ follows directly from the definitions and triangle inequality. 

As for (2), we give the proof in the case $t=R$, the general case $r \leq t \leq R$ follows just replacing $R$ by $t$ in the argument below. 
We start observing that
 \begin{eqnarray} 
\nonumber
|(f)_{B_{r}}-(f)_{B_{R}}| & \leq & \mean{B_r}| f - (f)_{B_{R}}| \, dx
\\ & \nonumber \leq & 4^{n/q} \left(\mean{B_R}| f - (f)_{B_{R}}|^{q_*} \, dx \right)^{1/q_*} \leq  c A(f;x_0,R)\label{tt1}
\end{eqnarray}
holds for $r \in [R/4,R]$ by H\"older's inequality. Moreover, a standard property of the local excess functional is that 
\eqn{tt2}
$$
A(f; x_0,r)= \left(\mean{B_r}| f - (f)_{B_r}|^{q_*} \, dx\right)^{1/q_*} \leq 2\left(\mean{B_r}| f - l|^{q_*} \, dx\right)^{1/q_*}
$$
for every $l \in \er$. Using the content of the last two displays (with $l = (f)_{B_R}$), it is then easy to obtain the first inequality from (2). For the remaining properties we shall use the splitting
\begin{eqnarray}  
\nonumber {\rm Tail}(f-(f)_{B_r};x_0, r) &   \leq &
c \left[ r^{sp} \int_{\ern \setminus B_{R}} \frac{|f(x)-l|^{p-1}}{ |x{-}x_0|^{n+sp}} \, dx \right]^{1/(p-1)} 
\\ \nonumber && 
 + c \left[ r^{sp} \int_{B_{R}\setminus B_{r} }  \frac{|f(x)-l|^{p-1}}{ |x{-}x_0|^{n+sp}}  \, dx \right]^{1/(p-1)}
\\  &=:  &T_{1}(l) + T_{2}(l)\label{split}
\end{eqnarray}
for $l \in \er$. 
The definition of Tail and \rif{tt1} now give
\begin{eqnarray*}{\rm Tail}(f-(f)_{B_{r}};x_0, r) 
& \leq &
c {\rm Tail}(f-(f)_{B_{R}};x_0, r) + c|(f)_{B_{r}}-(f)_{B_{R}}|\\
&\leq & cT_{1}((f)_{B_{R}}) + cT_{2}((f)_{B_{R}}) + cA(f;x_0,R)\\
&\leq & c{\rm Tail}(f-(f)_{B_{R}};x_0, R) + cA(f;x_0,R)\\
&= & c E(f;x_0,R)\,,
\end{eqnarray*}
completing the proof of $(2)$. 

We now begin the proof of (3), again starting by \rif{split} with $l = (f)_{B_r} $, that gives
\eqn{split again}
$$
{\rm Tail}(f-(f)_{B_r};x_0, r) \leq T_{1}((f)_{B_r})+T_{2}((f)_{B_r})\;.
$$
By $(1)$ we have
\begin{eqnarray} \  
\nonumber 
&&T_{1}((f)_{B_r}) = c  \left( \frac{r}{R} \right)^{\frac{sp}{p-1}}  {\rm Tail}(f-(f)_{B_{r}};x_0, R) 
\\ \nonumber & &   \qquad \quad\leq c \left( \frac{r}{R} \right)^{\frac{sp}{p-1}} \left[ {\rm Tail}(f-(f)_{B_{R}};x_0, R) 
+ {\rm Tail}((f)_{B_{R}}-(f)_{B_{r}};x_0, R) \right]
\\  &  &\qquad  \quad \leq c   \left( \frac{r}{R} \right)^{sp/q_*}  E(f; x_0, R) + c \left( \frac{r}{R} \right)^{sp/q_*} 
 |(f)_{B_{R}}-(f)_{B_{r}}|\,.\label{tt4}
\end{eqnarray}
In order to estimate the last quantity appearing on the right hand side of the above display, we first consider the case 
$r \in (0,R/4]$. Letting $\gamma \in (1/4,1/2]$ and $k\geq 2$, $k \in \en$, such that $r = \gamma^k R$, we have, by using also \rif{tt2} and property (2) (for obviously a different choice of $r,t,R$)
\begin{eqnarray} \nonumber 
 |(f)_{B_{r}}-(f)_{B_{\gamma R}}|& = &|(f)_{B_{\gamma^k R}}-(f)_{B_{\gamma R}}|\\  
 &\leq &   \sum_{j=1}^{k-1} | (f)_{B_{\gamma^{j}R }}- (f)_{B_{\gamma^{j+1}R }}| \nonumber \\ 
& \leq& \gamma^{-n/q_*} 
 \sum_{j=1}^{k-1} \left( \mean{ B_{\gamma^{j}R}  }|f - (f)_{B_{\gamma^{j}R}}  |^{q_*} \, dx  \right)^{1/q_*} \nonumber \\
& \leq& c  \sum_{j=1}^{k-1} \int_{\gamma^{j}R}^{\gamma^{j-1}R} 
 A(f;x_0, \gamma^{j}R)\, \frac{dt}{t} \nonumber \\
& \leq&  c  \sum_{j=1}^{k-1} \int_{\gamma^{j}R}^{\gamma^{j-1}R} 
 A(f;x_0,t)\, \frac{dt}{t} \nonumber \\
& \leq&c\int_{r}^{R} 
 A(f;x_0, t)\,  \nonumber \frac{dt}{t}\,.
\label{estlike}
\end{eqnarray} 
On the other hand, when $r \in (R/4, R]$, as in \rif{tt1} we trivially have that 
$$|(f)_{B_R}-(f)_{B_r}|  \leq  c  A(f;x_0, R)  \leq  c  E(f;x_0, R) \leq  c  \left( \frac{r}{R} \right)^{sp/q_*} E(f;x_0, R) $$ so that in any case $r \in (0, R]$ we find 
\begin{equation} \  \nonumber 
|(f)_{B_R}-(f)_{B_r}|   \leq  c   \left( \frac{r}{R} \right)^{sp/q_*}E(f;x_0, R) +  c \int_{r}^R A(f;x_0, t)\, \frac{dt}{t}\,.
\end{equation}
Hence, recalling \rif{tt4}, we can conclude with
\eqn{split2}
$$
T_{1}((f)_{B_r}) \leq  c \left( \frac{r}{R} \right)^{sp/q_*}  E(f;x_0, R)+c\int_{r}^R  \left( \frac{r}{t} \right)^{sp/q_*}A(f;x_0, t) \, \frac{dt}{t} 
$$
whenever $r \leq R$. 
Next, we estimate $T_2((f)_{B_r}) $ and we start by showing  that 
\eqn{rewritet2}
$$
T_2((f)_{B_r})\leq c \left[ r^{sp} \int_{B_{R}\setminus B_{r} }  \frac{|f(x)-(f)_{B_r}|^{q_*}}{ |y{-}x_0|^{n+sp}}  \, dx \right]^{1/q_*}
$$
holds for a constant $c \equiv c (n,s,p)$. In the case $p\geq 2$ there is nothing to prove since $q_* =p-1$. When $p< 2 $ we instead define 
$$
\lambda_{R, r}:= \int_{B_{R} \setminus B_{r}}  \frac{dx}{ |x{-}x_0|^{n+sp}} = \frac{c}{r^{sp}}-\frac{c}{R^{sp}}\,,
\qquad {\rm w} (x):=  \frac{1}{ |x{-}x_0|^{n+sp}}
$$
for $c \equiv c (n,s,p)$. 
Then we have, using Jensen's inequality
\begin{eqnarray*}
 T_{2}((f)_{B_r}) &\leq &
  \left[ r^{sp}\lambda_{R,r}\right]^{1/(p-1)} \left(\mean{B_{R} \setminus B_{r} } |f-(f)_{B_r}|^{p-1}  \, d{\rm w}(x) \right)^{1/(p-1)}
  \\& \leq &
  c\left[ r^{sp}\lambda_{R,r}\right]^{(2-p)/(p-1)} r^{sp}
  \int_{B_{R} \setminus B_{r}}|f-(f)_{B_r}| \, d{\rm w}(x)
    \\& \leq &c
 r^{sp} \int_{B_{R}\setminus B_{r}}\frac{|f(x)-(f)_{B_r}|}{ |x{-}x_0|^{n+sp}}  \, dx\end{eqnarray*}
so that \rif{rewritet2} follows also when $p<2$ as $q_*=1$ in this case. To proceed, we again first consider the case where it initially is $r \leq R/4$. 
As in the case of $ T_{1}((f)_{B_r})$, we write  $r = \gamma^kR$ and, using the concavity of the function $t \mapsto t^{1/q_*}$, we have
\begin{eqnarray} \  
\nonumber
T_2((f)_{B_r})  & \leq  &   c \left( \sum_{j=0}^{k} \gamma^{s p j} \mean{B_{\gamma^{-j} r}} |f-(f)_{B_{r}}|^{q_*} \, dx \right)^{1/q_*}
\\   & \leq &
\nonumber
c \sum_{j=0}^{k} \gamma^{s p j/q_*} \left( \mean{B_{\gamma^{-j} r}} |f-(f)_{B_{r}}|^{q_*} \, dx \right)^{1/q_*}
\end{eqnarray}
with $c \equiv c(n,s,p)$. Moreover, by triangle and H\"older's inequalities we also get
\begin{eqnarray*}  
\nonumber
&&
\left( \mean{B_{\gamma^{-j} r}} |f-(f)_{B_{r}}|^{q_*} \, dx \right)^{1/q_*}       
\\&&\qquad \qquad  \leq  c  \sum_{i=0}^{j} \left( \mean{B_{\gamma^{-i} r}} |f-(f)_{B_{\gamma^{-i} r}}|^{q_*} \, dx \right)^{1/{q_*}}
\leq   c\sum_{i=0}^{j}A(f;x_0,\gamma^{-i}r) 
\end{eqnarray*}
whenever $j \in \{0, \ldots, k\}$. 
By combining the two last displays above and applying Fubini's theorem on sums, we obtain using again the property in (3)
\begin{eqnarray*}  
T_2((f)_{B_r})  &\leq& c  \sum_{j=0}^{k} \gamma^{s p j/q_*}A(f;x_0,\gamma^{-j}r) \\
&\leq& c \sum_{j=0}^{k-1} \int_{\gamma^{-j}r}^{\gamma^{-j-1}r}\gamma^{s p j/q_*}A(f;x_0,\gamma^{-j}r) \, \frac{dt}{t}
+ c \gamma^{s p k/q_*}A(f;x_0,R)\\
&\leq& c \sum_{j=0}^{k-1} \int_{\gamma^{-j}r}^{\gamma^{-j-1}r}\left( \frac{r}{t} \right)^{sp/q_*}A(f;x_0,t) \, \frac{dt}{t}+ c\left(\frac{r}{R}\right)^{s p/q_*}A(f;x_0,R)\\
&\leq& c\int_{r}^{R}\left( \frac{r}{t} \right)^{sp/q_*}A(f;x_0,t) \, \frac{dt}{t}+ c\left(\frac{r}{R}\right)^{s p/q_*}E(f;x_0,R)\;.
\end{eqnarray*}
On the other hand, by the very definition of $T_2((f)_{B_r}) $ when $r \in (R/4, R]$, we simply have 
$$
T_2((f)_{B_r})    \leq A(f;x_0, R)\leq \left( \frac{r}{R} \right)^{sp/q_*}E(f;x_0, R)
$$
so that in any case $r \in (0,R)$ we conclude with $$
T_{2}((f)_{B_r}) \leq  c\left( \frac{r}{R} \right)^{sp/q_*}   E(f;x_0, R)+c\int_{r}^R  \left( \frac{r}{t} \right)^{sp/q_*}A(f;x_0, t) \, \frac{dt}{t} \;.
$$
Combining this last estimate with \rif{split} and \rif{split2} then leads to $(3)$. We finally prove (4), which is a straightforward consequence of the preceding properties. Indeed by using also \rif{tt2} it is easy to get that
$$
A(f; x_0,\sigma r)\leq 2\left(\mean{B_{\sigma r}}| f - (f)_{B_r}|^{q_*} \, dx\right)^{1/q_*}
\leq 2\sigma^{-n}A(f; x_0, r)\;.
$$ In a similar way, using (3) (with the choice $r \to \sigma r$ and $R \to r$) and the previous inequality, yields
$$
{\rm Tail}(f-(f)_{B_{\sigma r}};x_0,\sigma r)\leq c(\sigma) E(f; x_0,r), 
$$ so that (4) follows and the proof is complete. 
\end{proof}
In the next result we restate the oscillation decay estimate of Theorem \ref{thm:holder} in terms of the local excess.  
\begin{lemma} \label{locexcessdecay}
With the assumptions and the notation of Theorem~\ref{thm:holder}, also the inequality 
\eqn{eq:osc red pot 1}
$$
A(v;x_0,\varrho) \leq c \left(\frac{\varrho}{r}\right)^\alpha  E(v;x_0,r)
$$
holds with $c\equiv c(n,s,p,\Lambda)$ whenever $0 < \varrho \leq r \leq R  $.\end{lemma}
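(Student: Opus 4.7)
I will deduce the inequality from the H\"older oscillation estimate of Theorem~\ref{thm:holder} by combining it with the trivial bound $A(v;x_0,\varrho) \leq \osc_{B_\varrho(x_0)} v$ and a careful splitting of the Tail integral that converts the right-hand side into the global excess $E(v;x_0,r)$. The two ranges $r/2 \leq \varrho \leq r$ and $\varrho < r/2$ are treated separately.

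In the range $r/2 \leq \varrho \leq r$, property~(2) of Lemma~\ref{lemma:basic E} applies and yields $A(v;x_0,\varrho) \leq c\, A(v;x_0,r) \leq c\, E(v;x_0,r)$. Since $(\varrho/r)^\alpha \geq 2^{-\alpha}$, absorbing the constant $2^\alpha$ into $c$ gives the claim in this range.

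For $\varrho < r/2$, I apply Theorem~\ref{thm:holder} on the ball $B_{r/2}(x_0)$ (admissible because $B_r(x_0) \subset \Omega$) with the choice $k := (v)_{B_r(x_0)}$, a choice permitted by Remark~\ref{remark:k}. This gives
\[
A(v;x_0,\varrho) \leq \osc_{B_\varrho(x_0)} v \leq c \left(\frac{\varrho}{r}\right)^\alpha \left[\,\mean{B_r(x_0)} |v - (v)_{B_r(x_0)}|\, dx + \tail(v - (v)_{B_r(x_0)}; x_0, r/4) \right].
\]
The first bracketed term is at most $A(v;x_0,r) \leq E(v;x_0,r)$ by H\"older's inequality, since $q_* \geq 1$. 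For the Tail at scale $r/4$, I split its integration domain into the inner annulus $B_r(x_0) \setminus B_{r/4}(x_0)$ and the exterior $\ern \setminus B_r(x_0)$. On the annulus, the bound $|x-x_0| \geq r/4$ gives $|x-x_0|^{n+sp} \geq (r/4)^{sp} |x-x_0|^n$, so the corresponding portion is controlled by $c\mean{B_r(x_0)} |v - (v)_{B_r(x_0)}|^{p-1}\, dx$; when $p \geq 2$ this equals $c\, A(v;x_0,r)^{p-1}$ by the definition $q_*=p-1$, while when $p<2$ Jensen's inequality for the concave map $t \mapsto t^{p-1}$ together with $q_*=1$ produces the same upper bound. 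On the exterior, enlarging the prefactor from $(r/4)^{sp}$ to $r^{sp}$ yields at most $c\,\tail(v - (v)_{B_r(x_0)}; x_0, r)^{p-1}$. Raising the sum of the two pieces to the power $1/(p-1)$ gives $\tail(v - (v)_{B_r(x_0)}; x_0, r/4) \leq c\, E(v;x_0,r)$, and the proof is complete.

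The main obstacle is managing the scale mismatch between the Tail at radius $r/4$ produced by Theorem~\ref{thm:holder} and the Tail at radius $r$ appearing in $E(v;x_0,r)$, compounded in the subquadratic case $p<2$ by the exponent mismatch between $p-1$ (in the Tail definition) and $q_* = \max\{1,p-1\}=1$ (in $A$); both mismatches are resolved simultaneously by the annulus/exterior split and Jensen's inequality described above.
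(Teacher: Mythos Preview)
Your proof is correct and follows essentially the same path as the paper's: both bound $A(v;x_0,\varrho)$ by the oscillation, apply Theorem~\ref{thm:holder} with $k=(v)_{B_r(x_0)}$, and then convert the resulting Tail at the smaller radius into the global excess $E(v;x_0,r)$. The only difference is cosmetic: where you carry out the annulus/exterior splitting of the Tail explicitly (and handle the $p<2$ exponent mismatch via Jensen), the paper simply invokes Lemma~\ref{lemma:basic E}(2)---whose proof contains exactly that splitting---to pass from $E(v;x_0,r/2)$ to $E(v;x_0,r)$ in one stroke.
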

	\begin{proof}
	Theorem~\ref{thm:holder} implies that
\begin{eqnarray} \label{eq:osc red pot} 
\nonumber &&
A(v;x_0,\varrho) \leq  \osc_{B_{\varrho}} v  \leq  c \left(\frac{\varrho}{r}\right)^\alpha \left[  A(v;x_0,r)
+E(v;x_0,r/2)\right]
\end{eqnarray}
for all $\varrho \in (0,r]$ and with $\alpha \equiv \alpha(n,s,p,\Lambda)$, $\alpha \in (0,sp/q_*)$. Now \rif{eq:osc red pot 1} follows by using Lemma~\ref{lemma:basic E}(2) to estimate 
$E(v;x_0,r/2) \leq cE(v;x_0,r)$. \end{proof}
\subsection{Proof of Theorem \ref{lemma:osc red}}
Lemma~\ref{lemma:basic E}(3) implies that
$$
{\rm Tail}(v-(v)_{B_{\varrho}};x_0, \varrho) \leq  
c \left( \frac{\varrho}{R} \right)^{sp/q_*} E(v;x_0,R)  +c \int_\varrho^{R}  \left(  \frac{\varrho}{t}\right)^{sp/q_*} A(v;x_0,t)  \, \frac{dt}{t} 
$$
for all $\varrho \in (0,r]$. Furthermore, by~\eqref{eq:osc red pot 1}
we have 
\begin{eqnarray} \  
\nonumber
\int_\varrho^{r}  \left(  \frac{\varrho}{t}\right)^{sp/q_*} A(v;x_0,t)  \, \frac{dt}{t}  
& \leq & 
c \int_\varrho^{r}  \left(  \frac{\varrho}{t}\right)^{sp/q_*} \left(\frac{t}{r}\right)^\alpha  \, \frac{dt}{t} E(v;x_0,r)
\\ \nonumber & \leq & c \left(\frac{\varrho}{r}\right)^\alpha  E(v;x_0,r)\,,
\end{eqnarray}
since $\alpha< sp/q_*$.
Consequently
\eqn{semitri0}
$$
{\rm Tail}(v-(v)_{B_{\varrho}};x_0, \varrho) \leq  c \left( \frac{\varrho}{R} \right)^{sp/q_*} E(v;x_0,R)+ c \left(\frac{\varrho}{r}\right)^\alpha  E(v;x_0,r)   \,.
$$
We now prove the following estimate:
\eqn{semitri}
$$
E(v;x_0,r) \leq c \int_r^{R}  \left(  \frac{r}{t}\right)^{sp/q_*} E(v;x_0,t) \, \frac{dt}{t}+ c\left(\frac{r}{R}\right)^{sp/q_*}E(v;x_0,R).  
$$
For this we distinguish between two cases, the first being when $r < R/2$. Then we have, 
by Lemma~\ref{lemma:basic E}(2)
$$
E(v;x_0,r) = \frac{1}{\log 2}\int_r^{2r} E(v;x_0,r)\, \frac{dt}{t} \leq c\int_r^{R}  \left(  \frac{r}{t}\right)^{sp/q_*} E(v;x_0,t)\,.  
$$
On the other hand, when $r \in [R/2, R]$ again by Lemma~\ref{lemma:basic E}(2) we trivially have
 $$
E(v;x_0,r) \leq cE(v;x_0,R)\leq c\left(\frac{r}{R}\right)^{sp/q_*}E(v;x_0,R)
$$
so that \rif{semitri} follows. Combining \rif{semitri0} and \rif{semitri} we conclude with
\begin{eqnarray*}  
\nonumber {\rm Tail}(v-(v)_{B_{\varrho}};x_0, \varrho) & \leq & c\left( \frac{\varrho}{r} \right)^{\alpha} \left( \frac{r}{R} \right)^{sp/q_*}E(v;x_0,R)\\ && \qquad +  c \left(\frac{\varrho}{r}\right)^\alpha   \int_r^{R}  \left(  \frac{r}{t}\right)^{sp/q_*} E(v;x_0,t)  \, \frac{dt}{t}
 \,,
\end{eqnarray*}  
where we have again used that $\alpha < sp/q_*$. On the other hand we also have 
$$ A(v;x_0, \varrho) \leq   c \left( \frac{\varrho}{r} \right)^{\alpha} \left( \frac{r}{R} \right)^{sp/q_*}E(v;x_0,R)+c\left(\frac{\varrho}{r}\right)^\alpha   \int_r^{R} 
 \left(  \frac{r}{t}\right)^{sp/q_*} E(v;x_0,t)  \, \frac{dt}{t} \,,
$$
that follows by \rif{eq:osc red pot 1}  
and \rif{semitri}. The assertion of \rif{eq:decay compi} now follows combining the content of the last two displays. 
\section{Basic a priori estimates}
In this section we are going to consider operators of the type $\mathcal{L}_{\Phi}$ under the assumptions \rif{monophi}-\rif{basicbounds}. With $B_{2r} \equiv B_{2r}(x_0)\subset \er^n$ being a fixed ball, we shall consider weak solutions $u \in W^{s,p}(\ern)$ to the Dirichlet problem
\begin{equation}\label{unod}
\left \{
\begin{array}{ccc}
-\mathcal{L}_{\Phi} u= \mu  \in C_0^\infty(\ern)&\mbox{in}& B_{2r}\\
u=g \in W^{s,p}(\ern)& \mbox{in}&  \mathbb R^n \backslash B_{2r}\,.
\end{array} \right.
\end{equation}
Accordingly, following the notation introduced in Section \ref{reduction}, 
we see that $u$ solves 
\begin{equation}\label{unod2}
\left \{
\begin{array}{ccc}
-\widetilde{\mathcal{L}}_{u} u= \mu  \in C_0^\infty(\ern)&\mbox{in}& B_{2r}\\
u=g \in W^{s,p}(\ern)& \mbox{in}&  \mathbb R^n \backslash B_{2r}\;,
\end{array} \right.
\end{equation}
where the operator $-\widetilde{\mathcal{L}}_{u}$ has been built in \rif{abbreviated}, starting from a Kernel $ \widetilde{K}(\cdot)$ 
that depends on $u$ itself, as described in Section \ref{reduction}.
We then define the following weak comparison solution $v\in W^{s,p}(\ern)$ solving the following Dirichlet problem:
\begin{equation}
\left \{
\begin{array}{ccc}\label{dued}
-\widetilde{\mathcal{L}}_{u}  v=0 &\mbox{in}&\,\,B_r\\
v=u&\mbox{in}&\mathbb R^n \backslash B_r\,.
\end{array} \right. 
\end{equation}
For solvability see also Remark \ref{solvability}. Throughout Lemmas \ref{estim1}-\ref{lemma:comp p<2}, $u$ and $v$ will denote the solutions defined in \rif{unod} and \rif{dued}, respectively, while we define $w:=u-v$. 
\subsection{A basic inequality}
We premise two algebraic inequalities. The first is the following:
\eqn{eq:coer}
$$
\frac1{c_p} \leq \frac{(|\xi_1|^{p-2} \xi_1 {-}  |\xi_2|^{p-2} \xi_2)(\xi_1{-}\xi_2)}{(|\xi_1|+|\xi_2|)^{p-2} |\xi_1 {-} \xi_2|^2 }\leq c_p 
 \qquad \forall \ \xi_1 \neq \xi_2   \,.
$$
that holds whenever $\xi_1, \xi_2$ are real numbers such that $\xi_1 \neq \xi_2$, and where $c_p$ is a constant depending only on $p$. 
The second, which has a similar nature, is instead 
\eqn{eq:coer2}
$$c_p\int_0^1|t \xi_1+(1{-}t)\xi_2|^{p-2}\,dt \geq(|\xi_1|+|\xi_2|)^{p-2}\,.$$
We refer to \cite[Chapter 8]{giusti} for a proof. Now, the first crucial comparison lemma. 
\begin{lemma}\label{estim1} The following inequality holds for a constant $c\equiv c(n,p,\Lambda)$, whenever $\xi>1$ and $d>0$:
\begin{eqnarray} \label{eq:comp est 0} 
\nonumber && \int_{B_r}\int_{B_r} \frac{\big(| u(x){-}u(y)|  + |v(x){-}v(y)|\big)^{p-2} |w(x){-}w(y)|^2}{(d{+}|w(x)|{+}|w(y)|)^\xi}\frac{\,dx\,dy}{|x{-}y|^{n+sp}}
\\ && \hspace{85mm} \leq \frac{cd^{1-\xi} |\mu|(B_r) }{(p-1)(\xi-1)}\;. 
\end{eqnarray}
In particular, when $p \geq 2$ we have
\eqn{eq:comp est 0pp} 
$$
\int_{B_r}\int_{B_r} \frac{|w(x){-}w(y)|^p}{(d{+}|w(x)|{+}|w(y)|)^\xi}\frac{\,dx\,dy}{|x{-}y|^{n+sp}}\leq \frac{cd^{1-\xi} |\mu|(B_r) }{(p-1)(\xi-1)}\;.
$$
\end{lemma}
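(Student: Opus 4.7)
\medskip

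\noindent\textbf{Proof plan for Lemma~\ref{estim1}.}
The idea is a nonlocal Boccardo--Gallou\"et type comparison: test the difference of the equations for $u$ and $v$ against a bounded truncation of $w=u-v$. Define
\[
\psi(t):=\textnormal{sign}(t)\,\frac{d^{1-\xi}-(d+|t|)^{1-\xi}}{\xi-1}\,,\qquad t\in\er,
\]
so that $\psi$ is odd, monotone nondecreasing, Lipschitz, satisfies $\psi(0)=0$, $\|\psi\|_{L^{\infty}(\er)}\le d^{1-\xi}/(\xi-1)$, and has the \emph{a.e.} derivative $\psi'(t)=(d+|t|)^{-\xi}$. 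Because $w\in W^{s,p}(\ern)$ and $w\equiv 0$ outside $B_r$, the composition $\psi(w)$ lies in $W^{s,p}(\ern)$ with compact support in $\overline{B_r}$; by a standard density argument it is an admissible test function for both weak formulations in \rif{unod2} and \rif{dued} written against the common symmetric kernel $\widetilde K$ from \rif{abbreviated}.

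\medskip

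\noindent Subtracting the two weak formulations tested against $\psi(w)$ yields
\[
\int_{\ern}\!\int_{\ern}\!\bigl[\Phi_p(u(x){-}u(y))-\Phi_p(v(x){-}v(y))\bigr]\bigl(\psi(w(x)){-}\psi(w(y))\bigr)\widetilde K(x,y)\,dx\,dy=\int_{B_r}\psi(w)\,d\mu,
\]
because $-\widetilde{\mathcal L}_u v=0$ in $B_r$ and $\psi(w)\equiv 0$ on $\ern\setminus B_r$. The right-hand side is immediately controlled by
\[
\Bigl|\int_{B_r}\psi(w)\,d\mu\Bigr|\le \|\psi\|_{L^{\infty}}|\mu|(B_r)\le\frac{d^{1-\xi}}{\xi-1}\,|\mu|(B_r).
\]
For the left-hand side set $a:=u(x){-}u(y)$, $b:=v(x){-}v(y)$ so that $a-b=w(x){-}w(y)$. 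Writing
\[
\psi(w(x))-\psi(w(y))=(w(x)-w(y))\int_{0}^{1}\psi'\bigl(t w(x)+(1-t)w(y)\bigr)\,dt
\]
and using that $\psi'$ is positive and radially decreasing, together with the elementary bound $|tw(x)+(1-t)w(y)|\le|w(x)|+|w(y)|$, one obtains
\[
\psi(w(x))-\psi(w(y))=\bigl(w(x)-w(y)\bigr)\eta(x,y),\qquad \eta(x,y)\ge(d+|w(x)|+|w(y)|)^{-\xi}.
\]
In particular $\psi(w(x))-\psi(w(y))$ has the same sign as $a-b$, and hence, by the monotonicity estimate \rif{eq:coer}, has the same sign as $\Phi_p(a)-\Phi_p(b)$. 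Thus each integrand on the left is nonnegative, and \rif{eq:coer} gives the pointwise lower bound
\[
\bigl[\Phi_p(a)-\Phi_p(b)\bigr]\bigl(\psi(w(x))-\psi(w(y))\bigr)\ge\frac{1}{c_p}\cdot\frac{(|a|+|b|)^{p-2}|a-b|^2}{(d+|w(x)|+|w(y)|)^{\xi}}.
\]

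\medskip

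\noindent Dropping all contributions outside $B_r\times B_r$ (legitimate by the nonnegativity just proved) and using $\widetilde K(x,y)\ge\Lambda^{-2}|x-y|^{-n-sp}$ produces exactly \rif{eq:comp est 0}, with the factor $1/(p-1)$ on the right-hand side traced from the behaviour of the constant in \rif{eq:coer} for $p\downarrow 1$. Finally, when $p\ge 2$, the triangle inequality $|a|+|b|\ge|a-b|=|w(x)-w(y)|$ and the monotonicity of $s\mapsto s^{p-2}$ give
\[
(|a|+|b|)^{p-2}|a-b|^2\ge|w(x)-w(y)|^{p},
\]
which upgrades \rif{eq:comp est 0} to \rif{eq:comp est 0pp}. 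The only genuinely delicate point is the justification that $\psi(w)$ may be used as test function; once this approximation is handled (by truncating $\psi$ and mollifying $w$ inside $B_r$, then passing to the limit using Fatou on the nonnegative integrand and dominated convergence on the bounded right-hand side), all remaining steps are algebraic manipulations on the already-established inequalities \rif{eq:coer}--\rif{eq:coer2}.
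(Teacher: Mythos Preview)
Your argument is correct and follows the same Boccardo--Gallou\"et strategy as the paper, but your execution is cleaner. The paper tests separately with $\varphi_\pm = \pm(d^{1-\xi}-(d+w_\pm)^{1-\xi})$, then for each must perform a case analysis on the signs of $w(x)$ and $w(y)$ to verify that the integrand $\Psi_\pm$ has the right sign and size; afterwards it must recombine the resulting bounds on $I_\pm$ by splitting the target integral into four sign regions. Your single odd test function $\psi$ is (up to the factor $\xi-1$) precisely $\varphi_+ + \varphi_-$; because $\psi$ is $C^1$ and odd, the identity $\psi(w(x))-\psi(w(y))=(w(x)-w(y))\eta(x,y)$ with $\eta\ge(d+|w(x)|+|w(y)|)^{-\xi}$ holds globally, so the monotonicity inequality \rif{eq:coer} delivers the pointwise lower bound in one stroke and no sign discussion is needed. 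The handling of the off-diagonal term $(\ern\setminus B_r)\times B_r$ via the nonnegativity of $(\Phi_p(u(x)-u(y))-\Phi_p(v(x)-u(y)))\psi(w(x))$ is the same as the paper's treatment of $I_2$.

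What the paper's longer route buys is perhaps a more explicit bookkeeping of the $(p-1)$ factor, which it extracts from the mean-value representation $\Phi_p(a)-\Phi_p(b)=(p-1)\int_0^1|ta+(1-t)b|^{p-2}\,dt\,(a-b)$ together with \rif{eq:coer2}; your remark that the constant $c_p$ in \rif{eq:coer} behaves like $1/(p-1)$ as $p\downarrow 1$ encodes exactly this, so the final dependence is identical.
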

\begin{proof}
With $w:=u-v$ and $w_\pm :=\max\{\pm w,0\}$, we let
$$\varphi_\pm=\pm \left( d^{1-\xi}-\frac{1}{(d+w_\pm)^{\xi-1}}\right)\,.$$
Note, in particular that $\varphi_\pm = 0$ on $\partial B_r$; moreover this function is bounded and still belongs to $W^{s,p}(\er^n)$ since
it is obtained via composition of $w$ with a Lipschitz function. In the following we shall use the notation $A(t,s) := |t|^{p-2} t - |s|^{p-2}s$ for $t,s \in \er$.
We choose $\varphi_\pm$ as test functions in the weak formulations of \rif{unod2} and \rif{dued}; subtracting them and using the symmetry of the kernel $\widetilde{K}(\cdot)$ defined in \rif{newkernelu}, and the fact that $u\equiv v$ outside $B_r$, we obtain
\begin{eqnarray}
\nonumber I_1 + I_2 &:= &\int_{B_r}\int_{B_r} A(u(x){-}u(y),v(x){-}v(y))(\varphi_\pm(x){-}\varphi_\pm(y))\widetilde{K}(x,y)\,dx \, dy \\ && \qquad +
\nonumber 2 \int_{\er^n\setminus B_r}\int_{B_r} A(u(x){-}u(y),v(x){-}u(y))\varphi_\pm(x)\widetilde{K}(x,y)\, dx \, dy\\ 
&=&
 \int_{B_r} \varphi_\pm\,d\mu=:I_3 \,.\label{eq:comp weak} 
\end{eqnarray}
We preliminary notice that $I_2$ is non-negative. Indeed, by using \rif{eq:coer} we get
$
A(u(x){-}u(y),v(x){-}u(y))(u(x){-}v(x)) \geq 0, 
$
and the non-negativity of $I_2$ follows from the fact that the very definition of $\varphi_\pm$ implies that
$\varphi_\pm(x)(u(x){-}v(x)) \geq 0$; see also the formula in the next display. 
Since
\begin{equation} \  \nonumber
\varphi_\pm(x){-}\varphi_\pm(y) = \pm (w_\pm(x){-}w_\pm(y))(\xi-1)
\int_0^1(d+t w_\pm(y)+(1-t)w_\pm(x))^{-\xi}\,dt\,,
\end{equation}
we find by the mean value theorem a point $\gamma_\pm^*\equiv \gamma_\pm^*(x,y) \in (0,1)$ such that 
\eqn{phiphi}
$$
\varphi_\pm(x){-}\varphi_\pm(y)  = \pm (w_\pm(x){-}w_\pm(y))(\xi-1) (d+\gamma_\pm^* w_\pm(y)+(1-\gamma_\pm^*)w_\pm(x))^{-\xi}\,.
$$
We now evaluate the quantity 
$$\Psi_\pm(x,y) := \pm A(u(x){-}u(y),v(x){-}v(y))(w_\pm(x){-}w_\pm(y))\,, $$
and do it for $w_+$ and $\Psi_+$, the case for $w_-$ and $\Psi_-$ being completely similar. If $w(x), w(y) \geq 0$, then $w_+(x)=w(x)$ and $w_+(y)=w(y)$, and hence, applying \rif{eq:coer}, we have
\begin{eqnarray} \  
\nonumber
\Psi_+(x,y) & = &  A(u(x){-}u(y),v(x){-}v(y)) (u(x){-}u(y)-(v(x){-}v(y))) 
\\ \nonumber & \geq &
\frac1c \big(| u(x){-}u(y)|  + |v(x){-}v(y)| \big)^{p-2}|w(x){-}w(y)|^2\,.
\end{eqnarray}
If, on the other hand, $w(x)> 0$ and $w(y)<0$, i.e. $u(x)> v(x)$ and $u(y)<v(y)$, then, using also \rif{eq:coer2} we have 
\begin{eqnarray} \  
\nonumber
\frac{\Psi_+(x,y)}{u(x){-}v(x) }  & = &  |u(x){-}u(y)|^{p-2} (u(x){-}u(y)) {-} |v(x){-}v(y)|^{p-2} (v(x){-}v(y)) 
\\ \nonumber  \,& =&\, (p-1) \int_0^1[t (u(x){-}u(y))+(1-t)(v(x){-}v(y))]^{p-2}\,dt\\
&& \nonumber \qquad \qquad \cdot(u(x){-}u(y){-}v(x){+}v(y))
\\ \nonumber  \,& \geq &\, \frac{(p-1)}{c_p}\big(| u(x){-}u(y)|  + |v(x){-}v(y)|\big)^{p-2}(u(x){-}v(x)) \,.
\end{eqnarray}
The case $w(x) = 0$ and $w(y)< 0$ is trivial since then $\Psi_+(x,y) = 0$. 
Thus, for all $p>1$ we conclude with
\begin{eqnarray*}
\nonumber
c\Psi_+(x,y)& \geq &(p-1)(|u(x){-}u(y)| + |v(x){-}v(y)|)^{p-2}(w_+(x))^2  \\
& = &(p-1)(|u(x){-}u(y)| + |v(x){-}v(y)|)^{p-2} (w_+(x){-}w_+(y))^2 \,.
\end{eqnarray*}
The case $w(x)<0$ and $w(y)\geq 0$ is treated in a completely similar manner. Indeed, we have
\begin{eqnarray} \  
\nonumber
\frac{\Psi_+(x,y)}{v(y){-}u(y) }  & =&\, (p-1) \int_0^1[t (u(x){-}u(y))+(1-t)(v(x){-}v(y))]^{p-2}\,dt\\
&& \nonumber \qquad \qquad \cdot(u(y){-}u(x){-}v(y)+v(x))
\\ \nonumber  \,& \geq &\, \frac{(p-1)}{c_p}\big(| u(x){-}u(y)|  + |v(x){-}v(y)|\big)^{p-2}(v(y){-}u(y)) \,.
\end{eqnarray}
so that, once again we conclude with 
\begin{eqnarray*}
\nonumber
c\Psi_+(x,y)& \geq &(p-1) (|u(x){-}u(y)| + |v(x){-}v(y)|)^{p-2}(-w_+(y))^2 \\
& = &(p-1) (|u(x){-}u(y)| + |v(x){-}v(y)|)^{p-2}(w_+(x){-}w_+(y))^2 \;.
\end{eqnarray*}
In any case we finally end up with
\[
c\Psi_{\pm}(x,y) \geq (p-1)  (|u(x){-}u(y)| + |v(x){-}v(y)|)^{p-2} (w_\pm(x){-}w_\pm(y))^2
\]
with $c$ being a constant depending only on $p$. 
Plugging this last inequality into ~\eqref{eq:comp weak}, and using \rif{phiphi} together with the following elementary estimate:
\[
(d+\gamma_\pm^* w_\pm(y)+(1{-}\gamma_\pm^*)w_\pm(x))^{-\xi} \geq (d{+}|w(x)|{+}|w(y)|)^{-\xi}\,,
\]
we finally get (recalling that $I_2 \geq 0$)
\begin{eqnarray*}
 &&I_{\pm}:=\int_{B_r}\int_{B_r} \frac{\big(| u(x){-}u(y)|  + |v(x){-}v(y)|\big)^{p-2} |w_\pm(x){-}w_\pm(y)|^2}{(d{+}|w(x)|{+}|w(y)|)^\xi}\frac{\,dx\,dy}{|x{-}y|^{n+sp}}
\\ && \qquad \qquad \leq \frac{cI_1}{(p-1)(\xi-1)} = \frac{cI_3}{(p-1)(\xi-1)} \leq \frac{cd^{1-\xi} |\mu|(B_r)}{(p-1)(\xi-1)}\,,
\end{eqnarray*}
where we have used the very definition of $\varphi_\pm$ that gives $|\varphi_\pm|\leq d^{1-\xi}$. We now split as follows:
\begin{eqnarray*}
&&\int_{B_r}\int_{B_r} \frac{\big(| u(x){-}u(y)|  + |v(x){-}v(y)|\big)^{p-2} |w(x){-}w(y)|^2}{(d{+}|w(x)|{+}|w(y)|)^\xi}\frac{\,dx\,dy}{|x{-}y|^{n+sp}} \\
&& \quad = \int_{B_r\times B_r  \cap \{w(x), w(y)\geq 0 \}} (\ldots)   
\, dx \, dy  + \int_{B_r \times B_r \cap \{w(x), w(y)\leq 0 \}} (\ldots)   
\, dx \, dy \\ && \qquad + \int_{B_r \times B_r \cap \{w(x)\geq 0, w(y)< 0 \}}(\ldots)   
\, dx \, dy + \int_{B_r \times B_r \cap \{w(x)< 0, w(y)\geq 0 \}} (\ldots)   
\, dx \, dy \\ && \quad  =: II_1 +II_2 + II_3 + II_4\,.
\end{eqnarray*}
The definition of $w_{\pm}$ implies that
$$
 II_1 +II_2  \leq I_+ + I_- \leq \frac{cd^{1-\xi} |\mu|(B_r)}{(p-1)(\xi-1)}\,.
$$
We estimate the remaining two integrals, confining to $II_3$, the estimate for $II_4$ being similar. We therefore consider points $x,y \in B_r$ such that $w(x)\geq 0$ and $w(y)< 0$. In the case $w(x) \geq  |w(y)|$ we have
$
|w(x)-w(y)|\leq 2|w(x)|=2w_{+}(x)=2|w_{+}(x)-w_{+}(y)|. 
$
In the case $w(x) \leq  |w(y)|$ we instead have that 
$
|w(x)-w(y)|\leq 2|w(y)|=2w_{-}(y)=2|w_{-}(x)-w_{-}(y)|. 
$
We therefore again conclude that
$$
 II_3 +II_4  \leq 2(I_+ + I_-) \leq \frac{cd^{1-\xi} |\mu|(B_r)}{(p-1)(\xi-1)}\,.
$$
Combining the content of the last four displays yields \rif{eq:comp est 0} and the proof of the lemma is complete. 
\end{proof}
\subsection{Basic estimates in the case $p\geq 2$}
We now proceed with the proof of the main a priori estimates for solutions, thereby introducing the following functions:
\eqn{dv0}
$$U_h(x,y)=\frac{|u(x){-}u(y)|}{|x{-}y|^{h}}\,, \
V_h(x,y)=\frac{|v(x){-}v(y)|}{|x{-}y|^{h}}\,, \ W_h(x,y)=\frac{|w(x){-}w(y)|}{|x{-}y|^{h}}$$
for $x, y \in \ern, x\not =y$, and for any $h \in (0,s]$. The first a priori estimate we are going to prove is concerned with the case $p \geq 2$ and is contained in the following:
\begin{lemma} \label{lemma:comp est grad}
Assume that $p \geq 2$, $h \in (0,s)$, and $q \in [1,\bar q)$, where 
$
\bar q $ has been defined in \trif{condizioni}. 
Let $\delta := \min\{\bar q - q,s-h\}$. Then
\begin{equation} \label{eq:comp}
\left(\int_{B_r} \mean{B_r} \frac{|w(x){-}w(y)|^q}{|x{-}y|^{n+hq}} \, dx \, dy   \right)^{1/q} 
\leq \frac{c}{r^{h}} \left[ \frac{|\mu|(B_r)}{r^{n-sp}} \right]^{1/(p-1)}
\end{equation}
holds for a constant $c\equiv c(n,s,p,\Lambda,\delta)$, that blows up when $\delta \to 0$.
\end{lemma}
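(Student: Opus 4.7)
I plan to combine the weighted inequality \eqref{eq:comp est 0pp} of Lemma \ref{estim1} (available in this cleaner form precisely because $p \geq 2$) with a Hölder splitting and a fractional Sobolev embedding, following the scheme of the classical Boccardo--Gallouet argument for local measure data problems. Writing $d := [|\mu|(B_r)/r^{n-sp}]^{1/(p-1)}$ reduces \eqref{eq:comp} to the scaling-invariant target $[w]_{h,q;B_r} \leq c\, r^{n/q - h} d$. A crucial structural feature is that $w = u-v$ vanishes identically outside $B_r$, by the boundary conditions in \rif{unod2} and \rif{dued}, so $w \in W^{h,q}_0(B_r)$ and the fractional Sobolev--Poincaré embedding $\|w\|_{L^{\theta^*}(B_r)} \leq c\, [w]_{h,q;B_r}$ with $\theta^* := nq/(n-hq)$ is at our disposal (with the obvious substitute when $hq \geq n$).

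For a parameter $\xi \in (1,p)$ to be chosen later, split
\begin{equation*}
\frac{|w(x){-}w(y)|^q}{|x{-}y|^{n+hq}} = \frac{|w(x){-}w(y)|^q}{(d{+}|w(x)|{+}|w(y)|)^{\xi q/p}\, |x{-}y|^{(n+sp)q/p}} \cdot \frac{(d{+}|w(x)|{+}|w(y)|)^{\xi q/p}}{|x{-}y|^{n+hq-(n+sp)q/p}}
\end{equation*}
and apply Hölder's inequality with conjugate exponents $p/q$ and $p/(p-q)$, which is legitimate since $q < p$. The first factor, raised to $p/q$, is exactly the quantity estimated by \eqref{eq:comp est 0pp} and, after inserting $d^{p-1} = |\mu|(B_r)/r^{n-sp}$, is bounded by $c\, r^{n-sp} d^{p-\xi}$. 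In the second factor the exponent of $|x-y|$ in the denominator, raised to $p/(p-q)$, equals $n-\gamma$ with $\gamma := pq(s-h)/(p-q) > 0$, which is integrable; combining the elementary bound $(d{+}|w(x)|{+}|w(y)|)^\theta \leq c(d^\theta + |w(x)|^\theta + |w(y)|^\theta)$ with $\int_{B_r}|x-y|^{\gamma-n}\,dy \leq c\, r^\gamma$ reduces this residual to $c\, r^\gamma\bigl(r^n d^\theta + \|w\|_{L^\theta(B_r)}^\theta\bigr)$ where $\theta := \xi q/(p-q)$.

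Now I choose $\xi$ so that $\theta \leq \theta^*$, i.e.\ $\xi \leq (p-q)n/(n-hq)$. This upper bound strictly exceeds $1$ precisely when $q < n(p-1)/(n-h)$, which is guaranteed by $q < \bar q$ together with the margin $\delta$; hence $\xi$ can be selected in $(1,p)$ with a positive separation from both endpoints quantified by $\delta$. Sobolev--Poincaré then gives $\|w\|_{L^\theta(B_r)}^\theta \leq c\, r^{n(1-\theta/\theta^*)}[w]_{h,q;B_r}^\theta$, where the passage from the seminorm on $\ern$ produced by Sobolev to the seminorm on $B_r$ uses the vanishing of $w$ outside $B_r$ together with a Hardy estimate for $W^{h,q}_0(B_r)$. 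Collecting everything yields an inequality of the form
\begin{equation*}
[w]_{h,q;B_r}^q \leq c\, r^{q(n/p-h)} d^{(p-\xi)q/p}\Bigl(r^n d^\theta + r^{n(1-\theta/\theta^*)}[w]_{h,q;B_r}^\theta\Bigr)^{(p-q)/p}.
\end{equation*}
Since $\theta(p-q)/p = \xi q/p < q$ (as $\xi < p$), Young's inequality absorbs the $[w]_{h,q;B_r}$ contribution from the right into the left, and an explicit computation of the resulting $r$- and $d$-exponents produces the desired bound $[w]_{h,q;B_r}^q \leq c\, r^{n-qh} d^q$, equivalent to \eqref{eq:comp}.

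The main technical obstacle is the simultaneous balancing of $\xi$: it must sit strictly above $1$, where the constant of Lemma \ref{estim1} blows up through the factor $1/(\xi-1)$, and strictly below $(p-q)n/(n-hq)$, where the Sobolev threshold is violated. The margin $\delta = \min\{\bar q - q, s - h\}$ is exactly what keeps these two thresholds apart, which is why the constant $c$ in \eqref{eq:comp} degenerates as $\delta \to 0$. A secondary subtlety requiring care is the comparison between $[w]_{h,q;\ern}$ and $[w]_{h,q;B_r}$, handled via the support condition of $w$ and a Hardy-type inequality for $W^{h,q}_0(B_r)$.
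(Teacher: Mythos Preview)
Your overall scheme mirrors the paper's proof closely: the same H\"older splitting with exponents $p/q$ and $p/(p-q)$, the same appeal to \eqref{eq:comp est 0pp}, and the same closure via fractional Sobolev embedding. The only cosmetic difference is that you fix $d = [|\mu|(B_r)/r^{n-sp}]^{1/(p-1)}$ up front and absorb the resulting $[w]_{h,q;B_r}^{\xi q/p}$ on the right by Young's inequality, whereas the paper sets $d$ equal to the $L^{\xi q/(p-q)}$-average of $|w|$ and closes by direct substitution; these two bookkeeping choices are equivalent.

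There is, however, a real gap in your window for $\xi$. You assert that the upper bound $(p-q)n/(n-hq)$ strictly exceeds $1$, equivalently that $q < n(p-1)/(n-h)$, and claim this ``is guaranteed by $q < \bar q$ together with the margin $\delta$''. But since $h < s$ one has $n-h > n-s$ and hence $n(p-1)/(n-h) < n(p-1)/(n-s) \leq \bar q$, so $q < \bar q$ gives no information about $q < n(p-1)/(n-h)$. Concretely, take $q$ close to $\bar q$ (so $\delta = \bar q - q$ is tiny) while $h$ sits well below $s$: then $(p-q)n/(n-hq) < 1$ and no admissible $\xi$ exists, so your splitting cannot close.

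The paper deals with this at the very start: a one-line H\"older inequality on the Gagliardo integrand gives
\[
\int_{B_r}\mean{B_r}\frac{|w(x)-w(y)|^{q_1}}{|x-y|^{n+h_1 q_1}}\,dx\,dy \;\leq\; \frac{c\,r^{(h_2-h_1)q_1}}{(h_2-h_1)^{q_1}}\left(\int_{B_r}\mean{B_r}\frac{|w(x)-w(y)|^{q_2}}{|x-y|^{n+h_2 q_2}}\,dx\,dy\right)^{q_1/q_2}
\]
for $h_1 < h_2$, $q_1 < q_2$, so it suffices to establish \eqref{eq:comp} when $h$ is arbitrarily close to $s$ and $q$ arbitrarily close to $\bar q$; in that regime the interval $(1,\,(p-q)n/(n-hq))$ is genuinely nonempty, with a gap quantified by $\delta$. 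You need this reduction (or an equivalent device) before running the main argument.
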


\begin{proof} Before starting we observe that by the same calculation made in the proof of Lemma \ref{lemma:cacc improved 2} we have that
$$
\int_{B_r} \mean{B_r} \frac{|w(x){-}w(y)|^{q_1}}{|x{-}y|^{n+h_1q_1}} \, dx \, dy \leq \frac{c r^{(h_2-h_1)q_1}}
{(h_2-h_1)^{q_1}} \left(\int_{B_r} \mean{B_r} \frac{|w(x){-}w(y)|^{q_2}}{|x{-}y|^{n+h_2q_2}} \, dx \, dy\right)^{q_1/q_2}
$$
holds for $0 < h_1 < h_2 < s$ and $1 \leq q_1 < q_2< \bar q$. Therefore we can reduce ourselves to prove \rif{eq:comp} when $h$ is arbitrarily close to $s$ and $q$ to $\bar q$, respectively. With $d>0$ and $\xi>1$ to be chosen in a few lines, we start by rewriting
\begin{eqnarray} \  
\nonumber
W_h^q(x,y)  & = & \left( \frac{W_s^p(x,y)}{(d{+}|w(x)|{+}|w(y)|)^{\xi}}  \right)^{q/p} 
\\ \nonumber && \qquad \cdot \left[ (d{+}|w(x)|{+}|w(y)|)^\xi |x{-}y|^{(s-h) p }\right]^{q/p}\,,
\end{eqnarray}
and then apply H\"older's inequality in order to get
\begin{eqnarray} \  
\nonumber
&& \frac{1}{|B_r|}\int_{\mathcal B_r} W_h^q(x,y)\,\frac{dx\, dy}{|x{-}y|^n}  \leq  
\left[ \frac{1}{|B_r|} \int_{\mathcal B_r} \frac{W^p_s(x,y)}{(d{+}|w(x)|{+}|w(y)|)^{\xi}}  \frac{dx\, dy}{|x{-}y|^n} \right]^{q/p}
\\ \nonumber & & \qquad \qquad \qquad  \qquad\cdot  \left[\frac{1}{|B_r|} \int_{\mathcal B_r} \frac{(d{+}|w(x)|{+}|w(y)|)^{\xi q/(p-q)}}{|x{-}y|^{n-(s-h)qp/(p-q) }} \,dx \, dy \right]^{(p-q)/p} \,.
\end{eqnarray}
Notice that we have used that $q <	 p$. 
Using \rif{eq:comp est 0pp} and Fubini's theorem then yields
\begin{eqnarray} \  
\nonumber
\frac{1}{|B_r|} \int_{\mathcal B_r} W^q_h(x,y)\,\frac{dx\, dy}{|x{-}y|^n} & \leq & \left[ \frac{c d^{1-\xi}}{\xi-1}\frac{|\mu |(B_r) }{|B_r|} \right]^{q/p}
\\ \nonumber & &\quad \cdot  c r^{(s-h)q} \left[ \mean{B_r} (d+|w(x)|)^{\xi q/(p-q)}\,dx \right]^{(p-q)/p} \,.
\end{eqnarray}
We then choose 
\eqn{defid}
$$d := \left(\mean{B_r} |w|^{\xi q /(p-q)} \, dx \right)^{(p-q)/ \xi q}\,. $$
Notice that we can assume that $d>0$; otherwise \rif{eq:comp} follows trivially. 
This gives 
\begin{equation} \label{eq:comp est prel 1}
\frac{1}{|B_r|}  \int_{\mathcal B_r} W_h^q(x,y)\,\frac{dx\, dy}{|x{-}y|^n} \leq c   d^{q/p}\left( r^{(s-h)p}\frac{|\mu|(B_r) }{|B_r|}  \right)^{q/p}.
\end{equation}
We now proceed with the choice of $\xi >1$ and we distinguish two cases. The first is when $p \leq n/s$, 
the second is when $p>n/s$. In the first case we claim we can find $0<h_m\equiv h_m(q)<s$ such that
\eqn{torna}
$$
 \frac{q}{p-q} \leq  \frac{nq}{n-h_m q}
$$
so that whenever $h_m< h< s$ we can find $\xi\equiv \xi(h)>1$ such that
\eqn{torna2}
$$
 \frac{\xi q}{p-q} =  \frac{nq}{n-hq}\,.
$$
Indeed, when $p \leq n/s$ then $\bar q = n(p-1)/(n-s)$ so that $q < \bar q $ is equivalent to have $q/(p-q)< nq/(n-sq)$ and therefore the existence of $h_m< s$ such that \rif{torna} holds follows.
We shall then prove \rif{eq:comp} with $h$ such that $s > h > h_m$ and by the observation made at the beginning of the proof this is sufficient to prove the lemma. By the fractional Sobolev inequality (see \cite{DPV}), recalling the choice of $d$ and using \rif{torna2},  we have
\eqn{imbedding}
$$
d
\leq  c r^h \left( \frac{1}{|B_r|}  \int_{\mathcal B_r} W_h^q(x,y)\,\frac{dx\, dy}{|x{-}y|^n}   \right)^{1/q}\,.
$$Inserting this last inequality into~\eqref{eq:comp est prel 1}, and taking into account the formula of $\xi$, leads to
\[
\left(\frac{1}{|B_r|} \int_{\mathcal B_r} W^q_h(x,y)\,\frac{dx\, dy}{|x{-}y|^n}  \right)^{\frac{p-1}{pq}} \leq c
\left[ r^{(s-h)p} r^h \frac{|\mu|(B_r) }{|B_r|}  \right]^{1/p}
\]
with $c \equiv c(n,s,p,\delta)$ and \rif{eq:comp} follows. It remains to treat the case when $p >n/s$, that implies $\bar q=p$. In this case, we can confine ourselves to prove the statement for exponents $q > n/s$ and $h < s$ such that $p > q > n/h$. Again by the remarks made at the beginning of the proof it is sufficient to prove \rif{eq:comp} for such values of $h$ and $q$. Since now $hq>n$, again by the Sobolev embedding theorem we have that \rif{imbedding} holds whenever $\xi>1$ and with $d$ defined as in \rif{defid}, and we can conclude as in the case $p \leq n/s$. 
\end{proof}
A straightforward application of the fractional Sobolev embedding theorem (see \cite{DPV}) together with \rif{eq:comp} gives the following: 
\begin{lemma} \label{lemma:comp est sol}
Assume that $p \geq 2$, $\gamma \in [1,\gamma^*)$, where 
$$
\gamma^* :=  \begin{cases}
\dfrac{n(p-1)}{n- p s}   & p < n/s    \\
 +\infty & p \geq n/s\;.
\end{cases}
$$ Then the inequality
$$
\left(\mean{B_r} |w|^\gamma  \, dx \right)^{1/\gamma} \leq c \left[ \frac{|\mu|(B_r)}{r^{n-sp}} \right]^{1/(p-1)}
$$
holds for a constant $c\equiv c(n,s,p,\Lambda,\gamma^* - \gamma)$.
\end{lemma}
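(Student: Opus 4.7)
\medskip
\noindent
\textbf{Proof plan.} The key observation is that, by the construction of $v$ in \rif{dued}, the comparison function $w := u-v$ vanishes a.e.\ on $\ern \setminus B_r$; hence $w$ may be viewed as an element of $W^{h,q}(\ern)$ with compact support in $\overline{B_r}$, which is precisely the setting where a genuine Sobolev embedding (with no Poincar\'e subtraction) is available.

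\medskip
\noindent
My first step is to choose admissible exponents. In the regime $p < n/s$ we have $\bar q = n(p-1)/(n-s)$, and a direct computation shows
\[
\lim_{(h,q)\to(s,\bar q)} \frac{nq}{n-hq} \;=\; \frac{n(p-1)}{n-sp} \;=\; \gamma^*.
\]
So, given $\gamma \in [1,\gamma^*)$, one can pick $h\in(0,s)$ close to $s$ and $q\in[1,\bar q)$ close to $\bar q$ with $hq<n$ and with Sobolev conjugate $q^\sharp := nq/(n-hq) > \gamma$; the closeness is quantified purely by $\gamma^*-\gamma$. In the case $p \geq n/s$ we instead take the pair $(h,q)$ with $hq > n$, so that the fractional Morrey embedding yields a bound on $\|w\|_{L^\infty(B_r)}$ and, a fortiori, on every $L^\gamma(B_r)$-norm with $\gamma < \infty$.

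\medskip
\noindent
My second step is to apply the fractional Sobolev inequality to $w$, in its scaled form on the ball:
\[
\left( \mean{B_r} |w|^{q^\sharp} \, dx \right)^{1/q^\sharp} \;\leq\; c\, r^{h}\left( \int_{B_r} \mean{B_r} \frac{|w(x){-}w(y)|^q}{|x{-}y|^{n+hq}} \, dx\, dy \right)^{1/q}.
\]
This is the point where the vanishing of $w$ outside $B_r$ is essential: either one invokes the embedding $\widetilde W^{h,q}_0(B_r) \hookrightarrow L^{q^\sharp}(B_r)$ for the zero extension, or one uses the standard $L^{q^\sharp}$-Poincar\'e--Sobolev on $B_r$ together with a fractional Hardy inequality to absorb the contribution of $(w)_{B_r}$ and the boundary tail of the Gagliardo seminorm. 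Feeding in the a priori bound \rif{eq:comp} from Lemma \ref{lemma:comp est grad} on the right-hand side produces
\[
\left( \mean{B_r} |w|^{q^\sharp} \, dx \right)^{1/q^\sharp} \;\leq\; c \left[\frac{|\mu|(B_r)}{r^{n-sp}}\right]^{1/(p-1)},
\]
and a final application of H\"older's inequality on $B_r$, using $\gamma \leq q^\sharp$, yields the claimed estimate with $c\equiv c(n,s,p,\Lambda,\gamma^*-\gamma)$.

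\medskip
\noindent
\textbf{Main obstacle.} The delicate point is the passage from the local Gagliardo seminorm on $B_r$ in \rif{eq:comp} to a genuine $L^{q^\sharp}$ bound on $w$. The support property $w\equiv 0$ outside $B_r$ does not, on its own, immediately trivialize this: one must either verify that the zero extension stays in $W^{h,q}(\ern)$ with controlled norm, or handle the mean value $(w)_{B_r}$ arising from the Poincar\'e--Sobolev version of the embedding. Both routes reduce to a fractional Hardy-type inequality on the ball, which holds in the relevant ranges of $(h,q)$ once the tuning of Step~1 is performed; the constant's dependence on $\gamma^*-\gamma$ enters here, through the distance of $hq$ from its critical value.
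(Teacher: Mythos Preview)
Your proposal is correct and follows exactly the paper's approach: the paper's entire proof is the single sentence ``A straightforward application of the fractional Sobolev embedding theorem (see \cite{DPV}) together with \rif{eq:comp} gives the following.'' Your exponent tuning (choose $h$ close to $s$ and $q$ close to $\bar q$ so that $q^\sharp = nq/(n-hq) > \gamma$, with Morrey embedding when $hq>n$) and the subsequent H\"older step are precisely what this one-liner unpacks to.

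The ``main obstacle'' you flag --- controlling $\|w\|_{L^{q^\sharp}(B_r)}$ by the \emph{local} Gagliardo seminorm $[w]_{W^{h,q}(B_r)}$ alone --- is a fair point of caution but not a genuine gap: the paper already uses this same inequality inside the proof of Lemma~\ref{lemma:comp est grad} (see \rif{imbedding}) and again in Lemma~\ref{lemma:comp p<2} (see \rif{frale}), each time citing \cite{DPV}. Since $w = u-v \in W^{s,p}(\ern)$ vanishes identically on $\ern\setminus B_r$, the required embedding is standard (e.g.\ via fractional Poincar\'e--Sobolev on $B_{2r}$ and using $w\equiv 0$ on $B_{2r}\setminus B_r$ to recover the mean, or via the fractional Hardy inequality); there is no circularity and no hidden dependence beyond $\gamma^*-\gamma$.
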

\subsection{Basic estimates in the case $2> p > 2-s/n$}
The counterpart of Lemma \ref{lemma:comp est grad} in the case $p < 2$ turns out to be more involved. 
\begin{lemma} \label{lemma:comp est grad p<2}
Assume that $2> p > p_*= 2-s/n$; for every $q \in [1,\bar q)$ (with
$
\bar q := n(p-1)/(n-s)
$) 
there exists $h(q) \in (0,s)$, such that if $h(q)<h < s$ 
and $\delta := \min\{\bar q - q,p-p_*, s-h\}$ then
\begin{eqnarray} \label{eq:comp-sotto2}
\nonumber &&\hspace{-7mm} \left(\int_{B_r}\mean{B_r} \frac{|w(x){-}w(y)|^q}{|x{-}y|^{n+hq}} \, dx \, dy  \right)^{1/q}  \leq \frac{c}{r^{h}} \left[ \frac{|\mu|(B_r)}{r^{n-sp}} \right]^{1/(p-1)}
\\ && \qquad\qquad\qquad\qquad+ \frac{c}{r^{h(p-1)}}  \left(\int_{B_r}\mean{B_r} \frac{|u(x){-}u(y)|^q}{|x{-}y|^{n+hq}} \, dx \, dy   \right)^{(2-p)/q} \left[ \frac{|\mu|(B_r)}{r^{n-sp}} \right]
\end{eqnarray}
holds whenever $ h(q)\leq h < s$, for a constant $c\equiv c(n,s,p,\Lambda,\delta)$ that blows-up when $\delta \to 0$.
\end{lemma}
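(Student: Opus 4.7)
The plan is to adapt the strategy of Lemma~\ref{lemma:comp est grad} to the subquadratic range, where the key pointwise bound $W_s^p\le c(U_s{+}V_s)^{p-2}W_s^2$ used there fails by going the wrong way. Setting $D:=d+|w(x)|+|w(y)|$ (with $d>0$ and $\xi>1$ to be chosen) and $\Theta:=(U_s{+}V_s)^{p-2}W_s^2/D^{\xi}$, Lemma~\ref{estim1} reads $\mean\!\int \Theta\,dxdy/|x{-}y|^n \le cd^{1-\xi}|\mu|(B_r)/|B_r|$, while the trivial rearrangement $W_s^2=\Theta\,D^{\xi}(U_s{+}V_s)^{2-p}$ holds a.e. A key observation is that $\bar q<2$ throughout our range (since $n(p-1)/(n-s)<2$ follows from $s<n/2$), so automatically $q<2$. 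Using $V_s\le U_s+W_s$ and subadditivity of $t\mapsto t^{2-p}$, one has $(U_s{+}V_s)^{2-p}\le c(U_s^{2-p}+W_s^{2-p})$; raising the identity to the $q/2$-th power (with $(a+b)^{q/2}\le a^{q/2}+b^{q/2}$) yields the pointwise splitting
\[
W_h^q \le c\,\Theta^{q/2}D^{\xi q/2}U_s^{q(2-p)/2}|x{-}y|^{(s-h)q}+c\,\Theta^{q/2}D^{\xi q/2}W_s^{q(2-p)/2}|x{-}y|^{(s-h)q}.
\]

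The technical heart of the proof is a three-factor H\"older inequality applied separately to each of the two pieces, with exponents $(2/q,\,2/(2-p),\,2/(p-q))$ (note $1/\alpha_1+1/\alpha_2+1/\alpha_3=q/2+(2-p)/2+(p-q)/2=1$). Before applying it, I would rewrite $U_s^{q(2-p)/2}|x{-}y|^{(s-h)q}=(U_h^q)^{(2-p)/2}|x{-}y|^{(s-h)qp/2}$, and analogously for the $W_s$-piece, so that the second H\"older factor produces exactly $\int U_h^q\,dxdy/|x{-}y|^n$ (respectively $\int W_h^q\,dxdy/|x{-}y|^n$). The first factor becomes $\int \Theta\,dxdy/|x{-}y|^n$, bounded by Lemma~\ref{estim1}; the third factor is $\int D^{\xi q/(p-q)}|x{-}y|^{(s-h)qp/(p-q)-n}\,dxdy$, of precisely the shape appearing in the $p\ge 2$ proof, and it is handled by Fubini as there, giving $cr^{(s-h)qp/(p-q)}\mean(d+|w|)^{\xi q/(p-q)}$. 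Denoting $X:=\mean\!\int W_h^q/|x{-}y|^n\,dxdy$ and $Y:=\mean\!\int U_h^q/|x{-}y|^n\,dxdy$, the combined estimate reads
\[
X \le c\,K'\bigl(X^{(2-p)/2}+Y^{(2-p)/2}\bigr),
\]
where $K'$ is an explicit product of $[d^{1-\xi}|\mu|(B_r)/|B_r|]^{q/2}$, $r^{(s-h)pq/2}$ and $[\mean(d+|w|)^{\xi q/(p-q)}]^{(p-q)/2}$. A standard Young inequality with exponents $2/(2-p)$ and $2/p$ absorbs the $X^{(2-p)/2}$ term into the left, leaving $X\le cK'Y^{(2-p)/2}+c(K')^{2/p}$.

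The parameter $\xi$ is then pinned by requiring $\xi q/(p-q)$ to coincide with the fractional Sobolev exponent $nq/(n-hq)$, namely $\xi=n(p-q)/(n-hq)$. The constraint $\xi>1$ is equivalent to $h>n(q-p+1)/q=:h(q)$, and the condition $h(q)<s$ reduces to $q<\bar q$; this defines the threshold asserted in the statement. Setting $d:=(\mean_{B_r}|w|^{nq/(n-hq)})^{(n-hq)/(nq)}$ one has $\mean(d+|w|)^{\xi q/(p-q)}\le cd^{\xi q/(p-q)}$, while the fractional Sobolev--Poincar\'e inequality---applicable because $w=u-v$ vanishes on $\ern\setminus B_r$---gives $d\le cr^hX^{1/q}$. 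Substituting and taking $(1/q)$-th roots reduces the inequality to the schematic form
\[
X^{1/q}\le c_1\,X^{1/(2q)}\,Y^{(2-p)/(2q)}+c_2\,X^{1/(pq)},
\]
where $c_1$ and $c_2$ are exactly the two prospective right-hand sides of \eqref{eq:comp-sotto2}. Two final Young inequalities---exponents $(2,2)$ applied to $X^{1/(2q)}\cdot Y^{(2-p)/(2q)}$ to produce $\tfrac12 X^{1/q}+\tfrac12 Y^{(2-p)/q}$, and exponents $(p,p/(p-1))$ applied to $X^{1/(pq)}=(X^{1/q})^{1/p}$---absorb all residual powers of $X^{1/q}$ into the left, delivering $X^{1/q}\le cc_1^2\,Y^{(2-p)/q}+cc_2^{p/(p-1)}$, which unpacks to \eqref{eq:comp-sotto2}.

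The main obstacle is the delicate bookkeeping in the three-exponent H\"older step: the factor $|x{-}y|^{(s-h)q}$ must be distributed between the second and third Hölder factors so that the second cleanly reconstructs $W_h^q$ or $U_h^q$ (and not $W_s^q$, $U_s^q$, which one cannot convert upward to the $h$-seminorm), while the third inherits exactly the exponent $(s-h)qp/(p-q)$ needed for the Fubini step to succeed as in the $p\ge 2$ case. The interplay of this choice with the subsequent double Young absorption and with the Sobolev-driven selection $\xi=n(p-q)/(n-hq)$ is what forces $h>h(q)$, and the blow-up of the Hölder exponents $2/(2-p)$ and $2/(p-q)$, together with the Sobolev constant as $h\to s$, accounts for the dependence of the final constant on $\delta$.
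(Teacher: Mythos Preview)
The proposal is correct and follows essentially the same route as the paper: the same three-factor H\"older inequality with exponents $(2/q,\,2/(2-p),\,2/(p-q))$, the same choice $\xi=n(p-q)/(n-hq)$ leading to the same threshold $h(q)$, the same Sobolev bound on $d$, and the same final Young absorption. The only cosmetic difference is the order of operations: the paper keeps $(U_h+V_h)^{(2-p)q/2}$ as a single H\"older factor, substitutes $d\le cr^hX^{1/q}$ immediately to reach $X^{1/q}\le cr^{-h(p-1)}M\big(|B_r|^{-1}\!\int(U_h{+}V_h)^q\big)^{(2-p)/q}$, and only then splits via $V_h\le U_h+W_h$ and applies one Young inequality; your pointwise splitting $(U_s{+}V_s)^{2-p}\le c(U_s^{2-p}+W_s^{2-p})$ before H\"older, followed by a Young step \emph{before} the Sobolev substitution, is what forces the two extra Young absorptions at the end, but the outcome is identical.
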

\begin{proof}
This time we rewrite
\begin{eqnarray} \  
\nonumber
W_h^q(x,y) & = &   \left[ \frac{(U_s+V_s)^{p-2}(x,y)  W_s^2(x,y)}{(d{+}|w(x)|{+}|w(y)|)^{\xi}} \right]^{q/2}  (U_h+V_h)^{(2-p)q/2}(x,y)
 \\ \nonumber & &\qquad \qquad  \cdot  (d{+}|w(x)|{+}|w(y)|)^{\xi q/2} |x{-}y|^{(s-h)pq/2}\;,
\end{eqnarray}
where again $\xi>1$ is to be chosen in a few lines. H\"older's inequality yields
\begin{eqnarray} \  
\nonumber &&
\frac{1}{|B_r|}\int_{\mathcal B_r}  W_h^q(x,y)  \frac{dx\, dy}{|x-y|^n}  
\\ \nonumber && \quad  \leq 
\left( \frac{1}{|B_r|} \int_{\mathcal B_r} \frac{(U_s+V_s)^{p-2}(x,y)  W_s^2(x,y)}{(d{+}|w(x)|{+}|w(y)|)^{\xi}} \, \frac{dx\, dy}{|x-y|^n} \right)^{q/2}
 \\ \nonumber & & \qquad    \cdot  \left(\frac{1}{|B_r|} \int_{\mathcal B_r} (U_h+V_h)^{q_1 (2-p)q/2}(x,y) \, \frac{dx\, dy}{|x-y|^n}  \right)^{1/q_1}
 \\ \nonumber & & \qquad    \cdot  \left( \frac{1}{|B_r|} \int_{\mathcal B_r} (d{+}|w(x)|{+}|w(y)|)^{\xi q_2 q/2} \, \frac{dx\, dy}{|x-y|^{n-\eps}}  \right)^{1/q_2}
   \label{eq:comp p<2 interm. 1}   = :   J_1 \cdot J_2 \cdot J_3\,,
\end{eqnarray}
where
\[
\frac{q}{2} + \frac{1}{q_1} + \frac{1}{q_2} = 1 \qquad \mbox{and} \qquad \eps = \frac{(s-h)p q q_2}{2} > 0\,.
\]
We choose $q_1 := 2/(2-p)$ that gives $q_2 = 2/(p-q)$\,.
Thus we have by Lemma~\ref{estim1} that 
\eqn{j12}
$$
J_1 \cdot J_2 \leq \left( \frac{c d^{1-\xi}}{\xi-1} \frac{|\mu |(B_r)}{|B_r|}  \right)^{q/2} \left( \frac{1}{|B_r|} \int_{\mathcal B_r} (U_h+V_h)^{q}(x,y) \, \frac{dx\, dy}{|x-y|^n}  \right)^{(2-p)/2}\,.
$$
We then turn to $J_3$. To this end, since $\eps \geq \delta/c > 0$, using Fubini's theorem we have  
\[
J_3 \leq c r^{\eps/q_2} \left( \mean{B_r} (d+|w|)^{\xi q_2 q/2} \, dx   \right)^{1/q_2}\,.
\]
We define 
\eqn{implys}
$$
\xi = \frac{n(p-q)}{n-hq}   \qquad  \Longleftrightarrow \qquad  \frac{nq}{n-hq} = \frac{\xi q_2 q}{2}  = \frac{\xi q}{p-q}
$$
and set
\[
d := \left( \mean{B_r} |w|^{\xi q_2 q/2} \, dx   \right)^{2/(\xi q_2 q)}\,.
\]
We then have that
\eqn{j3}
$$
J_3 \leq c r^{\eps/q_2}  d^{\xi q/2}\,.
$$
Observe that the previous inequality holds provided $\xi >1$; this is the point where the choice of the parameter $h(q)$ mentioned in the statement comes into the play. For this, observe that a few manipulations give
\[
\xi = 1 + \frac{n}{(n-hq)(n-s \bar q)} \left[ (n-ps) (\bar q- q)  + q (s-h)(\bar q-p)\right]\,.
\]
Then, with $q < \bar q$ being fixed as in the statement of the lemma, we find $h(q)< s $ such that the expression in the square 
bracket in the above display remains positive whenever $h(q)< h < s$ (recall that $\bar q < p$). Using \rif{j3} in \rif{j12} we get 
\[
J_1 \cdot J_2 \cdot J_3 \leq c r^{\eps/q_2} \left(d \frac{|\mu |(B_r)}{|B_r|}  \right)^{q/2} \left(  \frac{1}{|B_r|}  \int_{\mathcal B_r} (U_h+V_h)^{q}(x,y) \, \frac{dx\, dy}{|x-y|^n}  \right)^{(2-p)/2} \,.
\]
By \rif{implys} we can apply the fractional Sobolev inequality, thereby getting
\[
 d \leq c r^h \left( \frac{1}{|B_r|} \int_{\mathcal{B}_r}  |W_h(x,y)|^q \, \frac{dx\, dy}{|x-y|^n}  \right)^{1/q } \,.
\]
Thus,~\eqref{eq:comp p<2 interm. 1} implies that
\begin{eqnarray} \  
\nonumber
&& \left( \frac{1}{|B_r|}\int_{\mathcal B_r}  W_h^q(x,y)  \frac{dx\, dy}{|x-y|^n}  \right)^{1/q}
\\ \nonumber && \qquad \leq c r^{h+2 \eps /(q q_2)} \left(  \frac{1}{|B_r|}  \int_{\mathcal B_r} (U_h+V_h)^{q}(x,y) \, \frac{dx\, dy}{|x-y|^n}  \right)^{(2-p)/q} \left[ \frac{|\mu |(B_r)}{|B_r|}  \right]\,.
\end{eqnarray}
Finally, since 
\[
\frac{2\eps }{q q_2} = \frac{(s-h)p q q_2}{2}\frac{2}{q q_2} = (s-h)p\,,
\]
and recalling the definitions of $W_h, U_h$ and $V_h$ we conclude with 
\begin{eqnarray*} 
\nonumber &&\hspace{-7mm} \left(\int_{B_r}\mean{B_r} \frac{|w(x){-}w(y)|^q}{|x{-}y|^{n+hq}} \, dx \, dy  \right)^{1/q}  
\\ && \leq  \frac{c}{r^{h(p-1)}}  \left(\int_{B_r}\mean{B_r} \frac{|w(x){-}w(y)|^q}{|x{-}y|^{n+hq}} \, dx \, dy   \right)^{(2-p)/q} \left[ \frac{|\mu|(B_r)}{r^{n-sp}} \right]\\
&& \qquad + \frac{c}{r^{h(p-1)}}  \left(\int_{B_r}\mean{B_r} \frac{|u(x){-}u(y)|^q}{|x{-}y|^{n+hq}} \, dx \, dy   \right)^{(2-p)/q} \left[ \frac{|\mu|(B_r)}{r^{n-sp}} \right]\,.
\end{eqnarray*}
In turn, since $p<2$ we can use Young's inequality with conjugate exponents $1/(p-1)$ and $1/(2-p)$ in order to estimate
\begin{eqnarray*} 
\nonumber &&\hspace{-7mm}   \frac{c}{r^{h(p-1)}}  \left(\int_{B_r}\mean{B_r} \frac{|w(x){-}w(y)|^q}{|x{-}y|^{n+hq}} \, dx \, dy   \right)^{(2-p)/q} \left[ \frac{|\mu|(B_r)}{r^{n-sp}} \right]\\
&&  \qquad \leq  \frac{1}{2}\left(\int_{B_r}\mean{B_r} \frac{|w(x){-}w(y)|^q}{|x{-}y|^{n+hq}} \, dx \, dy  \right)^{1/q}  +  \frac{c}{r^{h}} \left[ \frac{|\mu|(B_r)}{r^{n-sp}} \right]^{1/(p-1)}\,.
\end{eqnarray*}
Connecting the content of the last two displays and reabsorbing terms finally yields 	\rif{eq:comp-sotto2} and the proof is complete. 
\end{proof}
The subquadratic analog of Lemma \ref{lemma:comp est sol} is much more delicate and requires a completely nontrivial proof relying on Caccioppoli estimates for solutions to homogeneous problems and the comparison argument of the previous lemma. 
\begin{lemma} \label{lemma:comp p<2}
Assume that $2> p >p_*= 2-n/s$ and 
$$
1 \leq \gamma < \gamma^* =\dfrac{n(p-1)}{n- p s} \;.
$$ Then there exists a constant 
$c\equiv c(n,s,p,\Lambda,\gamma^* - \gamma, p-p_*)$ such that the following inequality holds:
\eqn{eq:comp sol p<2} 
$$
 \left(\mean{B_{r}} |w|^\gamma \, dx  \right)^{1/\gamma} \leq c\left[ \frac{|\mu|(B_{2r})}{r^{n-sp}} \right]^{1/(p-1)}+ c[E(u;x_0,2r)]^{2-p}\left[ \frac{|\mu|(B_{2r})}{r^{n-sp}} \right]\;.
$$
Moreover, for every $q \in [1,\bar q)$ 
there exists $h(q) \in (0,s)$, such that if $h(q)<h < s$ such that the inequality
\begin{eqnarray} \label{eq:comp grad p<2}
\nonumber
 \left(\int_{B_r}\mean{B_r} \frac{|w(x){-}w(y)|^q}{|x{-}y|^{n+hq}} \, dx \, dy  \right)^{1/q}& \leq& \frac{c}{r^{h}} \left[ \frac{|\mu|(B_{2r})}{r^{n-sp}} \right]^{1/(p-1)}
\\  && \quad  + \frac{c}{r^{h}}   [E(u;x_0,2r)]^{2-p}\left[ \frac{|\mu|(B_{2r})}{r^{n-sp}} \right]
\end{eqnarray} 
holds for a constant $c$ depending only on $n,s,p,\Lambda$ and $\delta$, where the meaning of $\bar q, \delta$ is specified in Lemma \ref{lemma:comp est grad p<2}. 
\end{lemma}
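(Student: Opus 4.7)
The two bounds are linked: I will derive \rif{eq:comp grad p<2} first, then obtain \rif{eq:comp sol p<2} as an immediate consequence via the fractional Sobolev embedding, exploiting that $w=u-v$ vanishes outside $B_r$ (since $v\equiv u$ there). Indeed, the embedding $W^{h,q}_0(B_r)\hookrightarrow L^{\gamma}(B_r)$ for $\gamma<\gamma^{*}=n(p-1)/(n-ps)$, applied to the zero-extension of $w$, turns the Gagliardo-seminorm bound \rif{eq:comp grad p<2} into the $L^{\gamma}$ bound \rif{eq:comp sol p<2} upon choosing $h$ close to $s$ and $q$ close to $\bar q$. The main task is therefore \rif{eq:comp grad p<2}.

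The starting point for \rif{eq:comp grad p<2} is Lemma \ref{lemma:comp est grad p<2} at scale $r$, which bounds the Gagliardo seminorm of $w$ on $B_r$ by the sum of $r^{-h}[|\mu|(B_r)/r^{n-sp}]^{1/(p-1)}$ and the same quantity multiplied by $[r^h\mathcal{S}_u(r)]^{2-p}$, where I abbreviate $\mathcal{S}_f(\rho):=\bigl(\int_{B_\rho}\mean{B_\rho}|f(x)-f(y)|^q/|x-y|^{n+hq}\,dx\,dy\bigr)^{1/q}$. Since $0<2-p<1$, a standard Young's inequality of the form $A^{2-p}B\leq \epsilon A+c_{\epsilon}B^{1/(p-1)}$ reduces the matter to establishing the auxiliary bound
\[
r^h\,\mathcal{S}_u(r)\leq c\,E(u;x_0,2r)+c\left[\frac{|\mu|(B_{2r})}{r^{n-sp}}\right]^{1/(p-1)}.
\]

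To prove this, I introduce an auxiliary comparison $\hat v$ solving the homogeneous Dirichlet problem in $B_{2r}$ with $\hat v\equiv u$ on $\ern\setminus B_{2r}$, and set $\hat w:=u-\hat v$; in particular $\hat w$ vanishes outside $B_{2r}$. The triangle inequality gives $\mathcal{S}_u(r)\leq c\mathcal{S}_{\hat v}(r)+c\mathcal{S}_{\hat w}(r)$. For $\mathcal{S}_{\hat v}(r)$, I apply Lemma \ref{lemma:cacc improved 2} to $\hat v-(u)_{B_{2r}}$ on $B_{2r}$ with $\sigma=1/2$ (justified by Remark \ref{remark:k}), obtaining $r^h\mathcal{S}_{\hat v}(r)\leq c\mean{B_{2r}}|\hat v-(u)_{B_{2r}}|\,dx+c\,\tail(\hat v-(u)_{B_{2r}};x_0,r)$. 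Decomposing $\hat v-(u)_{B_{2r}}=(u-(u)_{B_{2r}})-\hat w$ and using that $\hat w$ is supported in $B_{2r}$ (so its tail centered at $x_0$ collapses to a weighted $L^{p-1}$ average on $B_{2r}$), while Lemma \ref{lemma:basic E}(2) controls the $u$-pieces by $E(u;x_0,2r)$, this produces exactly $E(u;x_0,2r)$ plus $L^{1}$ and $L^{p-1}$ averages of $\hat w$ over $B_{2r}$.

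These remaining $\hat w$-averages, as well as the term $\mathcal{S}_{\hat w}(r)\leq c\mathcal{S}_{\hat w}(2r)$, are in turn controlled via the fractional Sobolev embedding applied to the zero-extension of $\hat w$ outside $B_{2r}$, giving bounds of order $r^h\mathcal{S}_{\hat w}(2r)$. A further application of Lemma \ref{lemma:comp est grad p<2} to $\hat w$ on $B_{2r}$ yields $r^h\mathcal{S}_{\hat w}(2r)\leq c[|\mu|(B_{2r})/r^{n-sp}]^{1/(p-1)}+c[(2r)^h\mathcal{S}_u(2r)]^{2-p}\,|\mu|(B_{2r})/r^{n-sp}$, and Young's inequality rewrites the last term as $\epsilon\,(2r)^h\mathcal{S}_u(2r)+c_{\epsilon}[|\mu|(B_{2r})/r^{n-sp}]^{1/(p-1)}$. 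Putting everything together produces an inequality of the form $r^h\mathcal{S}_u(r)\leq cE(u;x_0,2r)+c[|\mu|(B_{2r})/r^{n-sp}]^{1/(p-1)}+\epsilon\,(2r)^h\mathcal{S}_u(2r)$, which closes by iteration along the dyadic scales $B_{2^k r}\subset\Omega$, via the smallness of $\epsilon$ and the convergence of the resulting geometric series. The main technical difficulty lies precisely in this scale-iteration step; it is made possible by the subcritical exponent $(2-p)/q<1$ (owing to $p>p_{*}$) and by the a priori $W^{h,q}_{\loc}$-regularity available at each intermediate scale.
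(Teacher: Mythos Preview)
Your overall architecture is right and matches the paper: reduce to \rif{eq:comp grad p<2} via fractional Sobolev embedding, compare $u$ with a solution of the homogeneous problem on a slightly larger ball, apply the Caccioppoli-type Lemma \ref{lemma:cacc improved 2} to that solution (shifted by a constant), feed Lemma \ref{lemma:comp est grad p<2} into the comparison, and use Young's inequality with exponents $(1/(2-p),1/(p-1))$ to turn the $\mathcal{S}_u^{2-p}$ factor into an absorbable term. The gap is in the closing step.

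Your dyadic iteration does not close. In the setup of this section the function $u$ solves the equation only in $B_{2r}$ (see \rif{unod}), so no comparison solution is available on $B_{4r}$, let alone on $B_{2^k r}$; the phrase ``$B_{2^k r}\subset\Omega$'' has no content here. Even in a broader context, iterating your inequality $k$ times replaces the right-hand side by a sum $\sum_{j=0}^{k-1} c\,\epsilon^j\bigl(E(u;x_0,2^{j+1}r)+[\,|\mu|(B_{2^{j+1}r})/(2^j r)^{n-sp}\,]^{1/(p-1)}\bigr)$ plus the tail term $\epsilon^k(2^k r)^h\mathcal{S}_u(2^k r)$, and there is no reason for $E(u;x_0,2^{j+1}r)$ to be controlled by $E(u;x_0,2r)$ as $j$ grows (the global excess can increase with radius), nor for the measure terms to collapse to $|\mu|(B_{2r})$. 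The series does not reduce to the local quantities the lemma asserts.

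The paper's remedy is to run the comparison not at one auxiliary scale but at a continuum of scales $\sigma r$ with $\sigma\in[1,2]$: for each such $\sigma$ one defines $v_\sigma$ solving $-\widetilde{\mathcal L}_u v_\sigma=0$ in $B_{\sigma r}$ with $v_\sigma=u$ outside. Repeating your steps with $B_{\sigma' r}\subset B_{\sigma r}$ in place of $B_r\subset B_{2r}$ produces, after Young's inequality, an estimate of the form
\[
F(u;\sigma' r)\leq \tfrac12\, F(u;\sigma r)+\frac{c\,r^{-h}}{(\sigma-\sigma')^{\beta}}\left\{E(u;x_0,2r)+\left[\frac{|\mu|(B_{2r})}{r^{n-sp}}\right]^{1/(p-1)}\right\}
\]
valid for all $1\leq\sigma'<\sigma\leq2$, with the right-hand side quantities \emph{fixed} throughout. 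This is exactly the hypothesis of the standard hole-filling iteration lemma (see \cite{giusti}), which absorbs the factor $\tfrac12$ and yields the desired bound on $F(u;r)$ without ever leaving $B_{2r}$. Replacing your dyadic scheme by this $\sigma$-parametrized family and the hole-filling lemma is the missing ingredient.
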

\begin{proof} All the balls here will have $x_0$ as centre; accordingly, we denote ${\rm Tail}(f;s)\equiv 
{\rm Tail}(f;x_0,s)$ and $E(f;s)\equiv 
E(f;x_0,s)$ for $s \in (0,2r)$. We observe that it is sufficient to prove \rif{eq:comp grad p<2}, as \rif{eq:comp sol p<2}  will then follow via fractional Sobolev inequality. Moreover, in the rest of the proof $h$ and $q$ are taken as in Lemma \ref{lemma:comp est grad p<2} and we denote
\[
F(\varphi;t) := \left(\int_{B_t}\mean{B_t} \frac{|\varphi(x){-}\varphi(y)|^q}{|x{-}y|^{n+hq}} \, dx \, dy  \right)^{1/q} 
\]
for $\varphi \in W^{h,q}(B_t)$, $t \in (0, 2r)$. For $1 \leq \sigma' < \sigma \leq 2$ we define
 $v_\sigma \in W^{s,p}(\ern)$ as the weak solution to the problem
\begin{equation}
\left \{
\begin{array}{ccc}
-\widetilde{\mathcal{L}}_{u}  v_\sigma=0&\mbox{in}&B_{\sigma r}\\
v=u&\mbox{in}&\mathbb R^n \backslash B_{\sigma r}\;.
\end{array} \right. 
\end{equation}
The operator $-\widetilde{\mathcal{L}}_{u} $ has been defined in \rif{abbreviated}. 
We start by trivially estimating 
\eqn{tric1}
$$
F(u;\sigma' r) \leq F(v_\sigma;\sigma' r) + F(u{-}v_\sigma;\sigma' r)\;. 
$$
By Lemma~\ref{lemma:comp est grad p<2} applied to $v_\sigma$, we have that 
\eqn{tric2}
$$
F(u{-}v_\sigma;\sigma r) \leq \frac{c}{r^{h}} \left[ \frac{|\mu|(B_{2r})}{r^{n-sp}} \right]^{1/(p-1)}+ \frac{cF(u;\sigma r)^{2-p}}{ r^{h(p-1)}}\left[ \frac{|\mu|(B_{2r})}{r^{n-sp}} \right] \;.
$$
Moreover, by Lemma~\ref{lemma:cacc improved 2} (applied to $v_\sigma-(v_{\sigma})_{B_{\sigma r}}$, see Remark \ref{costanti}) we obtain
\begin{eqnarray}
\nonumber && F(v_\sigma;\sigma' r)  
\\  && \qquad \leq
\frac{ c r^{-h}}{(\sigma-\sigma')^{\theta/q}} \left[  \mean{B_{\sigma r}} |v_\sigma - (v_\sigma)_{B_{\sigma r}}|\, dx+{\rm Tail}(v_\sigma-(v_\sigma)_{B_{\sigma r}};\sigma r/2)  \right]\;.\label{rils}
\end{eqnarray}
The first term on the right-hand side can be estimated as
 \begin{eqnarray*} \  
\nonumber
\mean{B_{\sigma r}} |v_\sigma - (v_\sigma)_{B_{\sigma r}}|\, dx & \leq & 2 \mean{B_{\sigma r}} |u{-}v_\sigma|\, dx + c(n) 
\mean{B_{2r}} |u{-}(u)_{B_{2r}}| \, dx\\
& \leq & 2 \mean{B_{\sigma r}} |u{-}v_\sigma|\, dx + c
E(u;2r)\,.
\end{eqnarray*}
We use the fractional Sobolev inequality (see \cite{DPV}) to estimate
\eqn{frale}
$$
 \mean{B_{\sigma r}} |u{-}v_\sigma| \, dx \leq c r^h F(u{-}v_\sigma;\sigma r)\,,
$$
and we therefore conclude with 
\eqn{unav}
$$
\mean{B_{\sigma r}} |v_\sigma - (v_\sigma)_{B_{\sigma r}}|\, dx \leq c E(u;2r)+c r^h F(u{-}v_\sigma;\sigma r)\;.
$$
For the second term appearing on the right-hand side of \rif{rils} we instead have
\begin{eqnarray} \  
\nonumber 
{\rm Tail}(v_\sigma-(v_{\sigma })_{B_{\sigma r}};\sigma r/2) 
& \leq & c {\rm Tail}(u{-}(u)_{B_{\sigma r}};\sigma r/2) \\
 \nonumber && \quad  + c {\rm Tail}(u{-}v_\sigma-(u{-}v_\sigma)_{B_{\sigma r}};\sigma r/2) \;.
\end{eqnarray}
Now, the first term can be estimated by means of Lemma \ref{lemma:basic E}(2), thereby obtaining
\begin{eqnarray} \  
\nonumber &&
{\rm Tail}(u{-}(u)_{B_{\sigma r}};\sigma r/2) 
\\ \nonumber && \qquad \leq {\rm Tail}(u{-}(u)_{B_{2r}};\sigma r/2)    + c |(u)_{B_{\sigma r}}-(u)_{B_{2r}}| 
\\ \nonumber && \qquad \leq c {\rm Tail}(u{-}(u)_{B_{2r}};\sigma r/2)   + c \mean{B_{2r}} |u{-}(u)_{B_{2r}}| \, dx 
\leq c E(u;2r)\,.
\end{eqnarray}
As for the second, again by Lemma \ref{lemma:basic E}(2) and \rif{frale}, we have: 
\begin{eqnarray*} \  
\nonumber &&
{\rm Tail}(u{-}v_\sigma-(u{-}v_\sigma)_{B_{\sigma r}};\sigma r/2) 
\\ \nonumber && \qquad \leq  c {\rm Tail}(u{-}v_\sigma;\sigma r/2)  + c |(u{-}v_\sigma)_{B_{\sigma r}}|
\\ \nonumber && \qquad \leq  c {\rm Tail}(u{-}v_\sigma;\sigma r) 
+ c \mean{B_{\sigma r}} |u{-}v_\sigma| \, dx\\ &
 & \qquad  \leq c \mean{B_{\sigma r}} |u{-}v_\sigma| \, dx\leq c r^h F(u{-}v_\sigma;\sigma r)\,,
\end{eqnarray*}
since $u = v_\sigma$ outside of $B_{\sigma r}$. Using the last three displays we get
$$
{\rm Tail}(v_\sigma-(v_{\sigma })_{B_{\sigma r}};\sigma r/2) \leq c E(u;2r)+ c r^h F(u{-}v_\sigma;\sigma r)\;.
$$
Combining the this last inequality with \rif{rils} and \rif{unav}, we finally arrive at 
\[
F(v_\sigma;\sigma' r)   \leq \frac{ c}{(\sigma-\sigma')^{\theta/q}}    \left[  r^{-h}E(u;2r) +  F(u{-}v_\sigma;\sigma r) \right]\,.
\]
In turn, this, together with \rif{tric1}-\rif{tric2}, yields 
\begin{eqnarray*}
F(u;\sigma' r)  &\leq & \frac{ c }{(\sigma-\sigma')^{\theta/q}} \left\{r^{-h}E(u;2r) 
+ \frac{1}{r^{h}}\left[\frac{|\mu|(B_{2r})}{r^{n-sp}} \right]^{1/(p-1)}\right\}\\
&&\qquad  \qquad \qquad \quad +\frac{ c }{(\sigma-\sigma')^{\theta/q}} \frac{F(u;\sigma r)^{2-p}}{ r^{h(p-1)}}\left[ \frac{|\mu|(B_{2r})}{r^{n-sp}} \right]\;.
\end{eqnarray*}
Applying Young's inequality with conjugate exponents $(1/(2-p),1/(p-1))$, we obtain  
\[
F(u;\sigma' r)   \leq \frac12 F(u;\sigma r) + \frac{ c r^{-h}}{(\sigma-\sigma')^{\frac{\theta}{q(p-1)}}}    \left\{ E(u;2r) +  
  \left[ \frac{|\mu|(B_{2r})}{r^{n-sp}} \right]^{1/(p-1)}\right\}\,, 
\]
that now holds whenever $1 \leq \sigma' < \sigma \leq 2$ and with a constant $c$ which is independent of $\sigma, \sigma'$. 
A standard iteration argument, see for example~\cite{giusti}, implies that 
\[
\left(\int_{B_r}\mean{B_r} \frac{|u(x){-}u(y)|^q}{|x{-}y|^{n+hq}} \, dx \, dy  \right)^{1/q}   \leq  \frac{c}{r^{h}}  \left\{E(u;2r) +    \left[ \frac{|\mu|(B_{2r})}{r^{n-sp}} \right]^{1/(p-1)}\right\}\,.
\]
This last inequality and \rif{eq:comp-sotto2} yields \rif{eq:comp grad p<2} after an elementary manipulation.
\end{proof}
With the same proof of Lemmas \ref{estim1}-\ref{lemma:comp est grad p<2} we have the following one, where the ball $B_r$ is replaced by any open bounded subset $\Omega$:
\begin{lemma} \label{lemma:existence comp}
Let $\Omega\subset \er^n$ be an open and bounded domain. 
Let $u, v \in W^{s,p}(\ern)$ be weak solutions to the problems
$$
\left \{
\begin{array}{ccc}
-\mathcal{L}_{\Phi} u = \mu \in C^{\infty}(\ern)&\mbox{in}&\Omega\\
u=g&\mbox{in}&\mathbb R^n \backslash \Omega
\end{array} \right. \quad \mbox{and} \quad \left \{
\begin{array}{ccc}
-\widetilde{\mathcal{L}}_{u} v=0&\mbox{in}&\Omega\\
v=g&\mbox{in}&\mathbb R^n \backslash \Omega\, ,
\end{array} \right. 
$$
respectively, for some $g \in  W^{s,p} (\ern)
$; let $w:= u-v$, $\bar q := n(p-1)/(n-s)$. Then: 
\begin{itemize}
\item When $p \geq 2$, for every $h \in (0,s)$ and $q \in [1,\bar q)$, and with  $\delta := \min\{\bar q - q,s-h\}$, the inequality 
$$ \left(\int_{\Omega}\int_{\Omega} \frac{|w(x){-}w(y)|^q}{|x{-}y|^{n+hq}} \, dx \, dy  \right)^{1/q} 
\leq  c\left[ |\mu|(\Omega)\right]^{1/(p-1)}
$$
holds for a constant $c \equiv c(n,s,p,\Lambda, \delta, \Omega)$.
\item When $2> p > p_* = 2-s/n$, for every $q \in [1,\bar q)$ 
there exists $h(q) \in (0,s)$, such that if $h(q)<h < s$ 
and $\delta := \min\{\bar q - q,p-p_*, s-h\}$ then the inequality \begin{eqnarray*} 
\nonumber && \left(\int_{\Omega}\int_{\Omega} \frac{|w(x){-}w(y)|^q}{|x{-}y|^{n+hq}} \, dx \, dy  \right)^{1/q} 
\\ && \qquad \leq c \left[ |\mu|(\Omega)\right]^{1/(p-1)}+ c \left(\int_{\Omega}\int_{\Omega} \frac{|u(x){-}u(y)|^q}{|x{-}y|^{n+hq}} \, dx \, dy   \right)^{(2-p)/q} \left[
|\mu|(\Omega) \right]
\end{eqnarray*}
holds for a constant $c \equiv c(n,s,p,\Lambda, \delta, \Omega)$\;.
\end{itemize}
\end{lemma}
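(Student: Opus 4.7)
The plan is to run exactly the same sequence of arguments used for Lemmas~\ref{estim1}, \ref{lemma:comp est grad}, and \ref{lemma:comp est grad p<2}, with the ball $B_r$ systematically replaced by the bounded domain $\Omega$. The crucial structural feature making this possible is that $w := u-v$ vanishes on $\ern\setminus\Omega$ (because $u\equiv v\equiv g$ there), so test functions of the form $\varphi_\pm = \pm\bigl(d^{1-\xi}-(d+w_\pm)^{1-\xi}\bigr)$ still lie in $W^{s,p}(\ern)$ with support inside $\Omega$, and the off-diagonal contribution $I_2$ in the analog of~\eqref{eq:comp weak} is again non-negative by the monotonicity estimate~\eqref{eq:coer}. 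This is the only place where the geometry of $B_r$ entered the proof of Lemma~\ref{estim1}, so the same manipulations yield the basic inequality
\[
\int_{\Omega}\int_{\Omega} \frac{(|u(x){-}u(y)|+|v(x){-}v(y)|)^{p-2}|w(x){-}w(y)|^{2}}{(d+|w(x)|+|w(y)|)^{\xi}}\,\frac{dx\,dy}{|x{-}y|^{n+sp}} \leq \frac{c\,d^{1-\xi}\,|\mu|(\Omega)}{(p-1)(\xi-1)}
\]
for all $d>0$, $\xi>1$, together with its $p\geq 2$ counterpart obtained from \eqref{eq:coer2}.

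For the case $p\geq 2$, I would then reproduce the Hölder splitting of $W_h^{q}$ carried out in the proof of Lemma~\ref{lemma:comp est grad}, choosing the exponent $\xi = n(p-q)/(n-hq)$ (or an appropriate value when $p>n/s$) and $d = \bigl(\fint_{\Omega} |w|^{\xi q/(p-q)}\bigr)^{(p-q)/(\xi q)}$. The final step — controlling $d$ by the Gagliardo seminorm of $w$ on $\Omega$ — is where the dependence on $\Omega$ enters the constant: since $w$ is supported in $\Omega$, extending it by zero and applying the fractional Sobolev embedding on any fixed ball $B_{R}\supset\Omega$ gives
\[
d \leq c(\Omega)\left(\int_{\Omega}\int_{\Omega} \frac{|w(x){-}w(y)|^{q}}{|x{-}y|^{n+hq}}\,dx\,dy\right)^{1/q}
\]
with a constant depending on $R\sim\dia(\Omega)$ in place of the scale-invariant $r^{h}$ factor used in Lemma~\ref{lemma:comp est grad}. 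Absorbing this constant into the final bound yields the first statement.

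For the subquadratic range $2 > p > p_{*}$, I would repeat verbatim the triple-Hölder decomposition of Lemma~\ref{lemma:comp est grad p<2} with the same choices of $q_{1}=2/(2-p)$, $q_{2}=2/(p-q)$, and $\xi$, $d$ as in that proof; the threshold $h(q)$ arises from requiring $\xi>1$ and depends only on $n,s,p,q,h$, not on the domain. Exactly as before, the term involving $u$ on the right-hand side appears from the $(U_{h}+V_{h})^{(2-p)q/2}$ factor via the triangle inequality $V_{h}\leq U_{h}+W_{h}$ and Young's inequality. Once again, the only nontrivial adaptation is replacing the scale-invariant application of the fractional Sobolev inequality on $B_{r}$ by its analog on $\Omega$, with the constant now depending on $\Omega$.

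The main obstacle I expect is purely bookkeeping: in Lemmas~\ref{lemma:comp est grad} and \ref{lemma:comp est grad p<2} the powers of $r$ are tracked carefully and cancel in the end, giving the clean dimensional estimate $\bigl[|\mu|(B_{r})/r^{n-sp}\bigr]^{1/(p-1)}$. In the present lemma these factors are simply absorbed into the $\Omega$-dependent constant, so no scaling information is retained; one must verify that each step is genuinely valid without invoking scale invariance. Apart from this, the argument is a direct transcription of the preceding three proofs.
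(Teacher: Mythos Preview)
Your proposal is correct and follows exactly the approach the paper takes: the paper states this lemma ``with the same proof of Lemmas~\ref{estim1}--\ref{lemma:comp est grad p<2}\ldots where the ball $B_r$ is replaced by any open bounded subset $\Omega$,'' and you have correctly identified the structural reason this works (namely that $w\equiv 0$ outside $\Omega$, so the test functions and the nonnegativity of $I_2$ carry over) together with the one genuine adaptation, the $\Omega$-dependent constant arising from the fractional Sobolev embedding.
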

\section{Proof of Theorem \ref{existence}}\label{esistenza}
In this section we show the existence of SOLA; we shall widely use the operators $-\widetilde{\mathcal{L}}_{u_j}$ defined in \rif{abbreviated} for a suitable sequence of approximating solutions $\{u_j\}$. 
\subsection{Step 1: Construction of the approximating problems.} We start with a technical lemma. 
\begin{lemma}[Construction of the approximating boundary values $g_j$] \label{lemma:g_j} Let $z \in \Omega$; there exists a sequence $\{g_j\} \subset C_0^\infty(\ern)$ 
such that for any $R>0$ 
\eqn{bound0}
$$
g_j \to g \quad \mbox{in } \;  W^{s,p}(B_R) \quad \mbox{and} \quad \int_{\ern \setminus B_{R}(z)} \frac{|g_j(y)-g(y)|^{p-1}}{ |y{-}z|^{n+sp}} \, dy \to 0
$$
as $j \to \infty$. Moreover, for every $\eps>0$ there exist a radius $\tilde R$ and an index $\tilde j$, both depending on $\eps$, such that 
\eqn{bound00}
$$
\int_{\ern \setminus B_{R}(z)} \frac{|g(y)|^{p-1} + |g_j(y)|^{p-1}}{ |y{-}z|^{n+sp}} \, dy \leq \eps 
$$
holds whenever $j\geq \tilde j$ and $R\geq \tilde R$. Finally, for every $R>0$ there exists a constant $c_R$, depending on $R$ and $g(\cdot)$, such that
\eqn{bound}
$$
\sup_j \|g_j\|_{W^{s,p}(B_R)}  \leq c_R\;.
$$
\end{lemma}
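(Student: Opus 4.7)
The plan is to construct $\{g_j\}$ by a standard mollification-truncation procedure. Take $\rho \in C_0^\infty(B_1(0))$ a nonnegative mollifier with $\int \rho \,dx = 1$, set $\rho_\eps(x) := \eps^{-n}\rho(x/\eps)$, and let $\chi \in C_0^\infty(B_2(0))$ satisfy $0\leq \chi \leq 1$ and $\chi \equiv 1$ on $B_1(0)$. With $\chi_j(x) := \chi((x-z)/j)$, I would define
\begin{equation*}
g_j(x) := \bigl(\rho_{1/j} * (\chi_j g)\bigr)(x)\,.
\end{equation*}
Since $p>1$ gives $g \in L^p_{\loc}(\ern) \subset L^1_{\loc}(\ern)$, the function $\chi_j g$ is compactly supported and integrable, so $g_j \in C_0^\infty(\ern)$ with $\spt g_j \subset B_{2j+1}(z)$.

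Properties \rif{bound0}$_1$ and \rif{bound} follow from classical mollification theory: for $R>0$ fixed and $j$ so large that $R+1/j<j$, one has $g_j = \rho_{1/j}*g$ on $B_R(z)$, and the standard convergence of mollifiers in fractional Sobolev spaces applied to $g\in W^{s,p}(B_{R+1})$ gives $\|g_j-g\|_{W^{s,p}(B_R)}\to 0$ as $j\to \infty$; the uniform bound \rif{bound} is then immediate.

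For the tail convergence in \rif{bound0}$_2$ the strategy is a two-region splitting: for an auxiliary radius $R'>R$ to be optimized,
\begin{equation*}
\int_{\ern \setminus B_R(z)} \frac{|g_j-g|^{p-1}}{|y-z|^{n+sp}}\,dy \;\leq\; \frac{1}{R^{n+sp}}\int_{B_{R'}(z)} |g_j-g|^{p-1}\,dy \;+\; c_p\!\int_{\ern \setminus B_{R'}(z)} \frac{|g_j|^{p-1}+|g|^{p-1}}{|y-z|^{n+sp}}\,dy\,.
\end{equation*}
The first summand tends to zero as $j\to\infty$ for each fixed $R'$ by $L^{p-1}(B_{R'})$-convergence of the mollification (using $g\in L^p_{\loc}\subset L^{p-1}_{\loc}$), and the $|g|^{p-1}$ contribution in the second summand is arbitrarily small for $R'$ large since $g\in L^{p-1}_{sp}(\ern)$. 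Everything then reduces to the uniform-in-$j$ tail bound
\begin{equation*}
\int_{\ern \setminus B_{R'}(z)} \frac{|g_j(y)|^{p-1}}{|y-z|^{n+sp}}\,dy \;\leq\; c\int_{\ern \setminus B_{R'/2}(z)} \frac{|g(x)|^{p-1}}{|x-z|^{n+sp}}\,dx\,,
\end{equation*}
valid once $1/j \leq R'/4$.

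This uniform tail bound is the main obstacle; it rests on the localization that for $|y-z|\geq R'$ and $|x-y|\leq 1/j$ one has $|x-z|\geq R'/2$ and $|y-z|\asymp|x-z|$, so that the convolution defining $g_j(y)$ samples $g$ only in the tail region of $z$. In the superquadratic case $p\geq 2$, Jensen's inequality applied to the probability density $\rho_{1/j}(y-\cdot)\,dx$ and the convex function $t\mapsto t^{p-1}$ gives $|g_j(y)|^{p-1}\leq(\rho_{1/j}*|g|^{p-1})(y)$, after which a direct Fubini argument together with $\int\rho_{1/j}(y-x)\,dy\leq 1$ closes the estimate. The genuine technical difficulty is the subquadratic range $1<p<2$, where $t\mapsto t^{p-1}$ is concave and Jensen points the wrong way; I would bypass this by decomposing $g = g\mathbf 1_{B_{R'/2}(z)}+g\mathbf 1_{\ern\setminus B_{R'/2}(z)}$, observing that the first summand contributes zero on $\ern\setminus B_{R'}(z)$ by disjointness of supports (since $1/j\leq R'/4$), and handling the remaining tail piece via a dyadic decomposition of the annulus $\{R'/2\leq|x-z|\leq 2j+1\}$ combined with local Hölder and Young inequalities, where the quasi-comparability $|y-z|\asymp|x-z|$ allows the weight $|x-z|^{-n-sp}$ to absorb the $(p-1)$-th power nonconvexity on each shell separately. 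Finally, \rif{bound00} is an immediate consequence of \rif{bound0}$_2$ combined with $|g_j|^{p-1}\leq c_p(|g_j-g|^{p-1}+|g|^{p-1})$ and the tail control on $g$ alone, by choosing $\tilde R$ and $\tilde j$ large enough to push each piece below $\eps/2$.
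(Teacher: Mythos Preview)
Your construction via explicit mollification differs from the paper's, which uses an abstract diagonal argument: choose radii $r_j \nearrow \infty$ so that the tail $\int_{\ern\setminus B_{r_j}(z)}|g|^{p-1}|y-z|^{-n-sp}\,dy \le 1/(j+1)$, and then pick $g_j\in C_0^\infty(B_{2r_j})$ with $\|g_j-g\|_{L^p(B_{2r_j})}+\|g_j-g\|_{W^{s,p}(B_{r_j})}\le 1/(j+1)$ by density. The tail of $g_j-g$ is then split at $\partial B_{2r_j}$: outside, $g_j=0$ and only the (small) tail of $g$ survives; inside $B_{2r_j}\setminus B_R$, one applies H\"older with exponents $p$ and $p/(p-1)$,
\[
\int_{B_{2r_j}\setminus B_R}\frac{|g_j-g|^{p-1}}{|y-z|^{n+sp}}\,dy
\;\le\;\Bigl(\int_{|y-z|\ge R}|y-z|^{-p(n+sp)}\,dy\Bigr)^{1/p}\|g_j-g\|_{L^p(B_{2r_j})}^{p-1},
\]
which works for \emph{all} $p>1$ and gives the result without any case distinction.

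For $p\ge 2$ your Jensen argument is fine, but the subquadratic case has a genuine gap. The dyadic-shell fix you propose relies implicitly on an inequality of the type $\int_{A_k}|\rho_{1/j}*f|^{p-1}\le c\int_{\tilde A_k}|f|^{p-1}$ on each annulus, and this is \emph{false} when $0<p-1<1$: mollification can strictly increase the $L^{p-1}$ quasi-norm (take $f$ a tall narrow bump and compare with its spread-out convolution). Passing through $L^p$ on each shell via Young and H\"older gives only $\int_{A_k}|g_j|^{p-1}\le c|A_k|^{1/p}\|g\|_{L^p(\tilde A_k)}^{p-1}$, which is the same upper bound one has for $\int_{\tilde A_k}|g|^{p-1}$ and therefore does not yield the desired comparison. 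Nor can you rescue it by the alternative route of controlling $\|g_j-g\|_{L^p(B_{2j+1})}$: with your scaling (cutoff at radius $j$, mollify at scale $1/j$) this norm need not tend to zero, since $g$ is only in $L^p_{\loc}$ and the transition region $B_{2j}\setminus B_j$ contributes $\|g\|_{L^p(B_{2j}\setminus B_j)}$, which can grow with $j$. The paper's construction avoids exactly this by \emph{building in} the $L^p$ closeness on the full support of $g_j$ as part of the diagonal choice, after which the single H\"older step above closes the argument uniformly in $p$.
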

\begin{proof} First, define a sequence of balls $B^j \equiv B_{r_j}(z)$, $r_{j+1} \geq 2r_j$ and functions  $g_j \in C_0^\infty(2 B^j)$, such that
\eqn{conv1}
$$
\int_{\ern \setminus B^j} \frac{|g(y)|^{p-1}}{ |y{-}z|^{n+sp}} \, dy+ \| g_j-g \|_{L^p(2B^j)} +\| g_j-g \|_{W^{s,p}(B^j)} \leq \frac{1}{j+1}\;.
$$
This gives the first convergence in \rif{bound0} and also \rif{bound}. 
We next prove the second convergence in \rif{bound0}. Indeed, for large enough $j$ we find that $B_R \subset B^j$ and
\begin{eqnarray} \  
\nonumber &&
\int_{\ern \setminus B_{R}(z)} \frac{|g_j(y)-g(y)|^{p-1}}{ |y{-}z|^{n+sp}} \, dy
\\ \nonumber &&  \qquad  =  \int_{2B^j \setminus B_{R}(z)} \frac{|g_j(y)-g(y)|^{p-1}}{ |y{-}z|^{n+sp}} \, dy
+  \int_{\ern \setminus 2B^j} \frac{|g(y)|^{p-1}}{ |y{-}z|^{n+sp}} \, dy
\\ \nonumber &&  \qquad  \leq  \left(\int_{2B^j \setminus B_{R}(z)} \frac{dy}{|y{-}z|^{p(n+sp)}}  \right)^{1/p} \|g_j-g \|_{L^p(2B^j)}^{p-1} + \frac{1}{j+1}
\\ \nonumber &&  \qquad  \leq \frac{c }{R^{sp}(j+1)^{p-1}} +  \frac{1}{j+1}\,,
\end{eqnarray}
where we have used H\"older's inequality and \rif{conv1}; this yields the second convergence in \rif{bound0}. Triangle inequality then gives
$$
\int_{\ern \setminus B_{R}(z)} \frac{|g_j(y)|^{p-1}}{ |y{-}z|^{n-sp} }\, dy  \leq  \frac{c }{R^{sp}(j+1)^{p-1}} +  \frac{c}{j+1} + c \int_{\ern \setminus B_R(z)} \frac{|g(y)|^{p-1}}{ |y{-}z|^{n+sp}} \, dy 
$$
and the assertions concerning \rif{bound00} also follows again recalling \rif{conv1}. 
\end{proof}
We now use the sequence $\{g_j\}$ to solve Dirichlet problems and build the approximating sequence $\{u_j\}$ required in Definition \ref{soladef}. Let $\{\mu_j\} \subset C_0^\infty(\ern)$ be a sequence of functions weakly converging in the sense of measures to $\mu$ and let $\{g_j\}\subset C_0^\infty(\ern)$ be a sequence of functions 
converging to $g$ as in Lemma~\ref{lemma:g_j}. By a standard variational argument - see Remark \ref{solvability} - we can then define, for every positive integer $j$, the functions $u_j,v_j \in  W^{s,p}(\ern)$ as weak solutions to the problems 
\eqn{approximateu}
$$
\left \{
\begin{array}{ccc}
-\mathcal{L}_{\Phi} u_j = \mu_j&\mbox{in}&\Omega\\
u_j=g_j&\mbox{in}&\mathbb R^n \backslash \Omega
\end{array} \right. \qquad \mbox{and} \qquad \left \{
\begin{array}{ccc}
-\widetilde{\mathcal{L}}_{u_j} v_j=0&\mbox{in}&\Omega\\
v_j=g_j&\mbox{in}&\mathbb R^n \backslash \Omega\, ,
\end{array} \right. 
$$
respectively. We recall here that the operator $-\widetilde{\mathcal{L}}_{u_j}$ has been defined in \rif{abbreviated} and described in Section \ref{reduction}.
\begin{lemma}[Initial coercivity estimate with Tail]\label{lemma:energy} 
There exist constants $c\equiv c(n,s,p,\Lambda)$ and $\sigma \equiv \sigma(n,s,p,\Lambda)$, $\sigma \in (0,1)$, such that
if $B_R \equiv B_R(z)$ is a ball such that $\Omega \subset B_{ \sigma R}(z)$ and $z \in \Omega$, then
\eqn{coerc} 
$$
\left[ v_j \right]_{W^{s,p}(B_R)} \leq c \left[ g_j \right]_{W^{s,p}(B_R)} + c R^{-s}\left[ \|g_j\|_{L^p(B_R)}+ {\rm Tail}(g_j;z,R)\right]\;.
$$
\end{lemma}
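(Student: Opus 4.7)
The plan is to test the weak formulation of $-\widetilde{\mathcal{L}}_{u_j} v_j = 0$ with the natural test function $\varphi := v_j - g_j \in W^{s,p}(\ern)$, which is admissible since it vanishes almost everywhere outside $\Omega \subset B_{\sigma R}(z)$. Expanding $\Phi_p(t)=|t|^{p-2}t$ and rearranging gives the energy identity
\[
\int_{\ern}\!\int_{\ern} |v_j(x){-}v_j(y)|^p \widetilde K(x,y)\,dx\,dy = \int_{\ern}\!\int_{\ern} \Phi_p(v_j(x){-}v_j(y))(g_j(x){-}g_j(y))\widetilde K(x,y)\,dx\,dy.
\]
Young's inequality on the right-hand side, with the $\varepsilon$-term reabsorbed into the left via the coercivity $\widetilde K \geq \Lambda^{-2}|x{-}y|^{-n-sp}$, then yields the preliminary global bound $\int\!\int |v_j{-}v_j|^p\widetilde K \leq c\int\!\int |g_j{-}g_j|^p\widetilde K$.

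Next, since $v_j \equiv g_j$ on $\ern\setminus\Omega$, and in particular on $\ern\setminus B_{\sigma R}\supset\ern\setminus B_R$, the two integrands in the preceding identity agree pointwise on $(\ern\setminus B_R)\times(\ern\setminus B_R)$ and this contribution cancels upon subtraction. Splitting $B_R\times B_R$ into $B_{\sigma R}\times B_{\sigma R}$, the annulus $(B_R\setminus B_{\sigma R})\times(B_R\setminus B_{\sigma R})$ (where again $v_j=g_j$), and the mixed pieces, I intend to extract the intermediate bound
\[
[v_j]_{W^{s,p}(B_R)}^p \leq c\,[g_j]_{W^{s,p}(B_R)}^p + c \int_{B_{\sigma R}}\!\int_{\ern\setminus B_R} |g_j(x){-}g_j(y)|^p\widetilde K(x,y)\,dx\,dy,
\]
the smallness of $\sigma\in(0,1)$ (depending only on $n,s,p,\Lambda$) being used to absorb the annular and mixed contributions into the seminorm of $g_j$ on $B_R$.

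The hardest step will be bounding the remaining cross-integral by $cR^{-sp}\|g_j\|_{L^p(B_R)}^p + cR^{-sp}\tail(g_j;z,R)^p$. The key geometric observation is that for $x\in B_{\sigma R}$ and $y\in\ern\setminus B_R$ one has $|x-y|\geq(1-\sigma)|y-z|$, hence $\widetilde K(x,y)\leq c(\sigma)|y-z|^{-n-sp}$. Writing $|g_j(x){-}g_j(y)|^p\leq c(|g_j(x)|^p+|g_j(y)|^p)$, the first term integrates against $\int_{\ern\setminus B_R}|y-z|^{-n-sp}\,dy\leq cR^{-sp}$ to produce exactly $cR^{-sp}\|g_j\|_{L^p(B_R)}^p$. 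The $|g_j(y)|^p$ piece, which contains all the far-field information, must instead be converted into the Tail via the identity $\int_{\ern\setminus B_R}|g_j(y)|^{p-1}|y-z|^{-n-sp}\,dy = R^{-sp}\tail(g_j;z,R)^{p-1}$ combined with H\"older's and Young's inequalities against the finite measure $|y-z|^{-n-sp}\chi_{\ern\setminus B_R}\,dy$ (of total mass $\approx R^{-sp}$); the careful balance of exponents so that $R^{-sp}\tail^p$ arises with precisely the right power is the delicate technical core of the argument, and will likely require combining the expansion of mixed cross-powers $|g_j(x)|^\alpha|g_j(y)|^{p-\alpha}$ with an interpolation to convert $|g_j(y)|^p$ factors into $|g_j(y)|^{p-1}$ factors modulo a lower-order $L^p$ correction.
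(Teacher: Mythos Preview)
Your approach has a genuine gap at exactly the place you flag as ``the delicate technical core''. After reaching the intermediate bound
\[
[v_j]_{W^{s,p}(B_R)}^p \leq c\,[g_j]_{W^{s,p}(B_R)}^p + c \int_{B_{\sigma R}}\!\int_{\ern\setminus B_R} |g_j(x){-}g_j(y)|^p\,\widetilde K(x,y)\,dx\,dy,
\]
you split $|g_j(x){-}g_j(y)|^p$ and are left with the term
\[
|B_{\sigma R}|\int_{\ern\setminus B_R}\frac{|g_j(y)|^p}{|y-z|^{n+sp}}\,dy.
\]
This quantity is \emph{not} controlled by $R^{-sp}\,\tail(g_j;z,R)^p$ with a constant depending only on $n,s,p,\Lambda$. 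Take $g_j\in C_0^\infty$ supported in the annulus $B_{2^kR}\setminus B_{2^{k-1}R}$ with $\|g_j\|_\infty=M$: then $\int_{\ern\setminus B_R}|g_j|^p|y-z|^{-n-sp}\,dy\approx M^pR^{-sp}2^{-ksp}$, while $\tail(g_j;z,R)^p\approx M^p2^{-ksp\cdot p/(p-1)}$, and the ratio blows up like $2^{ksp/(p-1)}$ as $k\to\infty$. No H\"older/Young interpolation against the measure $|y-z|^{-n-sp}dy$ can repair this, because Jensen's inequality goes the wrong way: $\int|g_j|^p\,d\nu$ dominates $(\int|g_j|^{p-1}\,d\nu)^{p/(p-1)}$, not the reverse.

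The source of the problem is the rearrangement into the ``energy identity'' $\int\!\int|v_j{-}v_j|^p\widetilde K=\int\!\int\Phi_p(v_j{-}v_j)(g_j{-}g_j)\widetilde K$. On the cross region $B_{\sigma R}\times(\ern\setminus B_R)$ this puts the far-field variable $g_j(y)$ into the \emph{linear} factor $(g_j(x){-}g_j(y))$ as well as into $\Phi_p$, and Young's inequality then produces $|g_j(y)|^p$. The paper instead keeps the test function $\varphi=v_j-g_j$ intact, so the cross term reads
\[
I_2=2\int_{\ern\setminus B_R}\int_{B_R}\Phi_p\big(v_j(x){-}g_j(y)\big)\,\big(v_j(x){-}g_j(x)\big)\,\widetilde K(x,y)\,dx\,dy.
\]
Here $g_j(y)$ appears only inside $\Phi_p$, hence to the power $p-1$, which matches $\tail(g_j;z,R)^{p-1}$ on the nose; the remaining factor $|v_j(x){-}g_j(x)|$ is supported in $B_{\sigma R}$ and is controlled by $(\sigma R)^s[v_j{-}g_j]_{W^{s,p}}$ via fractional Poincar\'e. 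This is where the smallness of $\sigma$ is actually used --- to reabsorb the resulting $[v_j]_{W^{s,p}(B_R)}$ contribution coming from $|v_j(x)|^{p-1}$ in $I_{2,1}$ --- not, as you suggest, to absorb annular or mixed pieces inside $B_R\times B_R$ (those either cancel identically or are bounded by $c[g_j]_{W^{s,p}(B_R)}^p$ without any smallness).
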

\begin{proof}
Let us denote, as in Section \ref{reduction}, $\Phi_p(t) = |t|^{p-2} t$ and $\widetilde K \equiv \widetilde K_{u_j,\Phi,K}$. The function $v_j$ satisfies 
$$\int_{\ern} \int_{\ern} \Phi_p(v_j(x){-}v_j(y))(\varphi(x){-}\varphi(y))\widetilde K(x,y)\,dx \,dy=0$$
for any $\varphi \in C^\infty_0(\Omega)$. Testing against $\varphi = v_j-g_j$, which is possible up to a standard density argument, and recalling that $v_j=g_j$ outside $\Omega$, one gets that
\begin{eqnarray} \  
\nonumber 0 & = & \int_{B_R} \int_{B_R} \Phi_p(v_j(x){-}v_j(y))(v_j(x){-}v_j(y) - (g_j(x){-}g_j(y)) \widetilde K(x,y) \, dx \, dy
\\ \nonumber  &  & + 2 \int_{\ern\setminus B_R} \int_{B_R} \Phi_p(v_j(x){-}g_j(y))(v_j(x){-}g_j(x)) \widetilde K(x,y) \, dx \, dy
\\ 
\nonumber  & =: & I_1 + I_2\,. 
\end{eqnarray}
Using \rif{thekernel} and \rif{monophi}, Young's inequality yields
\[
I_1 \geq \frac1c \left[ v_j \right]_{W^{s,p}(B_R)}^p - c \left[ g_j \right]_{W^{s,p}(B_R)}^p
\]
for a constant $c$ depending only on $n, p$ and $\Lambda$. As for $I_2$, again using \rif{thekernel} and \rif{monophi} 
we have
\begin{eqnarray} \  
\nonumber I_2  & \geq & -c  \int_{\ern\setminus B_R} \int_{B_R} \frac{|v_j(x)|^{p-1}|v_j(x){-}g_j(x)|}{|x{-}y|^{n+sp}}\, dx \, dy 
\\ \nonumber && -c  \int_{\ern\setminus B_R} \int_{B_R} \frac{|g_j(y)|^{p-1}|v_j(x){-}g_j(x)|}{|x{-}y|^{n+sp}}\, dx \, dy 
=:  - I_{2,1} - I_{2,2}\,,
\end{eqnarray}
for a constant depending only on $p,\Lambda$. 
Now, observe that the distance from $\ern\setminus B_R$ to the support of $v_j(x){-}g_j(x)$ is larger than $R/2$ (recall we are assuming $\Omega \subset B_{R/2}(z)$); therefore we can always estimate 
\eqn{distanza}
$$
|x{-}y|^{-n-sp} \leq 8^{n+p}|y-z|^{-n-sp}\qquad \forall \  \ y \in \ern \setminus B_R\ \mbox{and} \   x \in B_R\ .
$$Therefore we have, using fractional Sobolev inequality and Young's inequality with $\eps \in (0,1)$, that
\begin{eqnarray*} 
I_{2,1}&\leq & c \int_{\ern\setminus B_R} \int_{B_R} \frac{|v_j(x)|^{p-1}|v_j(x){-}g_j(x)|}{|y{-}z|^{n+sp}}\, dx \, dy\\
&\leq & \frac{c}{R^{sp}} \int_{B_R} |v_j|^{p-1}|v_j{-}g_j|\, dx\\
&\leq & \frac{c(\eps)}{R^{sp}} \int_{B_R} |v_j|^{p}\, dx+\frac{c\eps}{R^{sp}}\int_{B_R} |v_j{-}g_j|^p\, dx
\\&\leq & c(\eps)R^{-sp}\|v_j\|_{L^{p}(B_R)}^p+c\eps\left[ v_j - g_j \right]_{W^{s,p}(B_R)}^p\\&\leq & c(\eps)R^{-sp}\|v_j\|_{L^{p}(B_R)}^p+c\eps[ v_j]_{W^{s,p}(B_R)}^p +c\eps[ g_j]_{W^{s,p}(B_R)}^p\;.
\end{eqnarray*}  
Since fractional Sobolev's inequality implies 
\begin{eqnarray} 
\notag
\|v_j\|_{L^{p}(B_R)} & \leq & \|v_j-g_j\|_{L^{p}(B_{\sigma R})} + \|g_j\|_{L^{p}(B_R)} 
\\ \nonumber& \leq & c R^{sp} \sigma^{sp} [ v_j]_{W^{s,p}(B_R)} +c R^{sp} \sigma^{sp}[ g_j]_{W^{s,p}(B_R)}+ \|g_j\|_{L^{p}(B_R)}\,,
\end{eqnarray}
we actually get
\[
I_{2,1} \leq c(\eps) \left( [ g_j]_{W^{s,p}(B_R)} + \|g_j\|_{L^{p}(B_R)} \right) + \left(c(\eps) \sigma^{sp} + c\eps \right)[ v_j]_{W^{s,p}(B_R)}\,.
\]
In a similar, actually simpler fashion, we have
\begin{eqnarray*}  
\nonumber
I_{2,2} &\leq& c \int_{\ern \setminus B_R} \frac{|g_j(y)|^{p-1}}{|y-z|^{n+sp}} \, dy  \int_{B_R} |v_j{-}g_j| \, dx\\
&\leq & c [{\rm Tail}(g_j;z,R)]^{p-1}R^{s(1-p)}\left[ v_j - g_j \right]_{W^{s,p}(B_R)}\\
&\leq & \eps \left[ v_j \right]_{W^{s,p}(B_R)}^p + \eps \left[g_j \right]_{W^{s,p}(B_R)}^p+ c(\eps)R^{-sp} [{\rm Tail}(g_j;z,R)]^{p}\;.
\end{eqnarray*}
Combining all the estimates found for the terms $I_1, I_2, I_{2,1}, I_{2,2}$ and choosing first $\eps$ small enough and then $\sigma$ small enough accordingly, both depending only on $n,s,p,\Lambda$, in order to reabsorb the terms featuring $\left[ v_j\right]_{W^{s,p}(B_R)}^p$ appearing on the right, we conclude with \rif{coerc}.
\end{proof}
\subsection{Step 2: A priori estimates for approximate solutions $\{u_j\}$.} With the sequences $\{u_j\}$ and $\{v_j\}$ being defined in \rif{approximateu},  
we further define the sequence $\{w_j\}$ by letting $w_j=u_j-v_j$, and, for $x\not=y$, and finally introduce the new functions
$$U^j_h(x,y)=\frac{|u_j(x){-}u_j(y)|}{|x{-}y|^{h}}\,, \; \; 
V^j_h(x,y)=\frac{|v_j(x){-}v_j(y)|}{|x{-}y|^{h}}\,, \; \; 
W^j_h(x,y)=\frac{|w_j(x){-}w_j(y)|}{|x{-}y|^{h}}$$
for any $h \in (0,s)$. We then have the following uniform bound:
\begin{lemma}\label{bounds}
Let $p>2$,  $h \in (0,s)$ and $q \in [1,\bar q)$, where $\bar q$ has been defined in \trif{condizioni}. 
Then there exists a constant $c$ depending only on $n,s,p,\Lambda, s-h,\bar q-q,g(\cdot),\Omega$ such that 
\begin{equation}\label{bound1}
\left(\int_{\Omega} |u_j|^q  \, dx  \right)^{1/q} +  \left(\int_{\Omega \times \Omega} (U^j_h)^q(x,y) \, \frac{dx\, dy}{|x{-}y|^n}  \right)^{1/q} \leq c
\end{equation}
holds for all $j \in \en$. In the case $2> p > p_* = 2-s/n$, for every $q \in [1,\bar q)$ 
there exists $h(q) \in (0,s)$, such that if $h(q)<h < s$ then estimate \trif{bound1} continues to hold and the constant $c$ additionally depends on $p-p_*$. 
\end{lemma}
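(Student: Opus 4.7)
The plan is to decompose $u_j = v_j + w_j$ with $v_j$ the homogeneous comparison solution from~\rif{approximateu} and to estimate the two pieces separately. First I would fix a large ball $B_R \equiv B_R(z)$, with $z \in \Omega$, such that $\Omega \subset B_{\sigma R}(z)$, where $\sigma$ is the constant of Lemma~\ref{lemma:energy}. That lemma then bounds $[v_j]_{W^{s,p}(B_R)}$ by a combination of $[g_j]_{W^{s,p}(B_R)}$, $\|g_j\|_{L^p(B_R)}$ and $\tail(g_j;z,R)$, all of which are uniformly controlled in $j$ by~\rif{bound} and~\rif{bound00} in Lemma~\ref{lemma:g_j} (the tail being finite for $R$ large enough, and then trivially for smaller $R$ by monotonicity). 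A standard fractional Sobolev--Poincar\'e inequality applied to $v_j-g_j$, which vanishes outside $\Omega \subset B_{\sigma R}$, upgrades the seminorm bound to $\|v_j\|_{W^{s,p}(B_R)} \leq c$ uniformly in $j$. A H\"older estimate on the Gagliardo double integral (exactly as in the proof of Lemma~\ref{lemma:cacc improved 2}, combined with the boundedness of $\Omega$) then yields $\|v_j\|_{L^q(\Omega)} + [v_j]_{W^{h,q}(\Omega)} \leq c$, since $h<s$ and $q<\bar q \leq p$.

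Next I would apply Lemma~\ref{lemma:existence comp} on $\Omega$ to the pair $u_j,v_j$. The weak convergence $\mu_j \rightharpoonup \mu$ in the sense of measures, together with $|\mu|(\ern)<\infty$, ensures $|\mu_j|(\Omega) \leq c$ uniformly in $j$. Hence in the superquadratic range $p \geq 2$ the first part of Lemma~\ref{lemma:existence comp} directly gives $[w_j]_{W^{h,q}(\Omega)} \leq c$. Since $w_j$ vanishes outside $\Omega$, a fractional Sobolev--Poincar\'e inequality then delivers $\|w_j\|_{L^q(\Omega)} \leq c$, and adding these bounds to the ones for $v_j$ from Step~1 via the triangle inequality yields~\rif{bound1}.

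For the subquadratic range $p_* < p < 2$ I would use the second part of Lemma~\ref{lemma:existence comp}, whose right-hand side features a factor $[u_j]_{W^{h,q}(\Omega)}^{2-p}|\mu|(\Omega)$. Bounding $[u_j]_{W^{h,q}(\Omega)} \leq [v_j]_{W^{h,q}(\Omega)} + [w_j]_{W^{h,q}(\Omega)}$ by the triangle inequality and inserting the uniform estimate on $[v_j]_{W^{h,q}(\Omega)}$ from Step~1, one arrives at an inequality of the form
\[
X_j \leq c + c\,(1+X_j)^{2-p}, \qquad X_j := [w_j]_{W^{h,q}(\Omega)}.
\]
Since $2-p < 1$ whenever $p>1$, Young's inequality with conjugate exponents $1/(2-p)$ and $1/(p-1)$ gives $c X_j^{2-p} \leq X_j/2 + c'$, which can be reabsorbed into the left-hand side; therefore $X_j \leq c$, and the proof is completed exactly as in the superquadratic case. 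The main obstacle is precisely this apparently circular appearance of $X_j$ on both sides of the subquadratic comparison estimate; it is unlocked by establishing the $W^{s,p}$-control on $v_j$ first, independently of $w_j$, and then by the $(2-p) < 1$ Young absorption.
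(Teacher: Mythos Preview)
Your proposal is correct and follows essentially the same route as the paper: control $v_j$ in $W^{s,p}$ via Lemma~\ref{lemma:energy} and Lemma~\ref{lemma:g_j}, downgrade to $W^{h,q}$ by the H\"older argument of Lemma~\ref{lemma:cacc improved 2}, bound $w_j$ through Lemma~\ref{lemma:existence comp}, and in the subquadratic case absorb the self-referential term via Young's inequality with exponents $1/(2-p)$ and $1/(p-1)$. The only cosmetic difference is that the paper carries the Young absorption on the quantity $[u_j]_{W^{h,q}(\Omega)}$ directly, whereas you rewrite it as an inequality for $X_j=[w_j]_{W^{h,q}(\Omega)}$; both are equivalent.
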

\begin{proof} First of all, since $\Omega$ is bounded, we find a point $z \in \Omega$ such that $\Omega \subset B_{\sigma R}(z)$ with $R := 4\sigma^{-1} \diam(\Omega)$, where $\sigma$ is as in Lemma~\ref{lemma:energy}. For such $R$, using Lemma~\ref{lemma:g_j} and Lemma~\ref{lemma:energy}, we infer that
$$
\left[ v_j \right]_{W^{s,p}(\Omega)} \leq c(g(\cdot),\Omega)
$$
holds for all $j\in \en$. It then follows that 
\begin{eqnarray} \  
\nonumber && \int_{\Omega \times \Omega} (V^j_h)^q(x,y) \, \frac{dx\, dy}{|x{-}y|^n}  = \int_{\Omega \times \Omega} \frac{|v_j(x){-}v_j(y)|^q}{|x{-}y|^{s q}}\frac{1}{|x{-}y|^{(h-s)q}}\, \frac{dx\, dy}{|x{-}y|^{n}}
\\ \nonumber && \qquad \leq \left[ v_j \right]_{W^{s,p}(\Omega)}^{q}
\left(\int_{B_{R/2} \times B_{R/2}} \frac{dx\, dy}{|x{-}y|^{n-(s-h)qp/(p-q)}} \right)^{(p-q)/p}\leq c
\end{eqnarray}
for another constant $c$ depending on $g(\cdot)$, $\Omega$ and $s-h$. 
By trivially estimating 
\begin{eqnarray} 
\nonumber
\left(\int_{\Omega \times \Omega} (U^j_h)^q(x,y) \, \frac{dx\, dy}{|x{-}y|^n}  \right)^{1/q} & \leq & \left(\int_{\Omega \times \Omega} (V^j_h)^q(x,y) \, \frac{dx\, dy}{|x{-}y|^n}  \right)^{1/q} 
\\ \nonumber & & \qquad + \left(\int_{\Omega \times \Omega} (W^j_h)^q(x,y) \, \frac{dx\, dy}{|x{-}y|^n}  \right)^{1/q}
\end{eqnarray}
we see that it remains to treat the second terms, and for this we shall employ Lemma ~\ref{lemma:existence comp} for the obvious choice $u \equiv u_j$ and $v\equiv v_j$. 
We distinguish between cases $p\geq 2$ and $2-s/n< p<2$. For $p\geq 2$, the terms are uniformly bounded by the weak convergence of $\mu_j$ to $\mu$. In the case for $2-s/n < p<2$, using the content of the last display again with the estimate from Lemma \ref{lemma:existence comp} (for the specified values of $q$ and $h(q)< h < s$), we instead have
\begin{eqnarray*}
 &&\left(\int_{\Omega \times \Omega} (U^j_h)^q(x,y) \, \frac{dx\, dy}{|x{-}y|^n}  \right)^{1/q}\leq c \left(\int_{\Omega \times \Omega} (V^j_h)^q(x,y) \, \frac{dx\, dy}{|x{-}y|^n}  \right)^{1/q} \\ && \qquad  \qquad 
+c\left[ |\mu_j|(\Omega)\right]^{1/(p-1)}+ c \left(\int_{\Omega \times \Omega} (U^j_h)^q(x,y) \, \frac{dx\, dy}{|x{-}y|^n}  \right)^{(2-p)/q}  \left[
|\mu_j|(\Omega) \right]\,.  
\end{eqnarray*}
We can now use Young's inequality with conjugate exponents $(1/(2-p), 1/(p-1))$, thereby getting again a uniform 
bound - with respect to $j$ - on the quantities appearing in the left hand side. Finally, 
the uniform boundedness of $\{\|u_j\|_{L^{q}(\Omega)}\}$ follows easily by fractional Sobolev 
inequality (see \cite{DPV}) applied to $u_j-g_j$ and the proof of \rif{bound1} is complete. 
\end{proof}
\subsection{Step 3: Passage to the limit and existence of SOLA}
From the results in the previous step we can conclude that, up to non-relabelled subsequences and using a diagonal argument, there exists $u \in W^{h,q}(\Omega)$ such that $u=g$ in $\ern\setminus \Omega$ and such that 
\eqn{convergence}
$$
\left\{
\begin{array}{ccc}
u_j \rightharpoonup u &\mbox{in} &W^{h,q}(\Omega) \\
u_j \rightarrow u &\mbox{in} &L^q(\Omega)\\
u_j \rightarrow u &\mbox{a.e. in} &\ern 
\end{array}\right.
$$
hold for any given $h \in (0,s)$ and $q\in [p-1,\bar q)$. 
Notice that we have used the compact embedding of $W^{h,q}$ in $L^q$ and the fact that $W^{h_1,q}(\Omega) \subset W^{h_2, q}(\Omega)$ holds provided $h_2 \leq h_1$. By Lemma~\ref{bounds} and Fatou's lemma we then have
\begin{equation}\label{bound2}
\left(\int_{\Omega \times \Omega} (U_h)^q(x,y) \, \frac{dx\, dy}{|x-y|^n}  \right)^{1/q} + \left(\int_{\Omega \times \Omega} (U^j_h)^q(x,y) \, \frac{dx\, dy}{|x-y|^n}  \right)^{1/q} \leq c
\end{equation}
and
\begin{equation}\label{bound3}
\left(\int_{\Omega } |u(x)|^q \, dx  \right)^{1/q} + \left(\int_{\Omega} |u_j(x)|^q \, dx   \right)^{1/q} \leq c
\end{equation}
hold for a constant $c$ is as in Lemma \ref{lemma:energy}. 
We recall that $U_h(\cdot)$ has been defined in \rif{dv0}. We now pass to the limit in the weak formulations of $-\mathcal{L}_{\Phi} u_j = \mu_j$ showing that $u$ is a weak solution to $-\mathcal{L}_{\Phi} u = \mu$. To this end, let $\varphi\in C^\infty_0(\Omega)$ be such that $\text{supp}\,\varphi \subset  \Omega$. The weak formulation for $u_j$ then gives us
\begin{eqnarray}   
\nonumber 
&& \int_{\ern} \varphi \,d\mu_j \; = \;   \int_{\ern} \int_{\ern}\Phi(u(x){-}u(y))(\varphi(x){-}\varphi(y))K(x,y)\,dx\,dy
\\  && \quad +\int_{\ern} \int_{\ern}(\Phi(u_j(x){-}u_j(y))-\Phi(u(x){-}u(y)))(\varphi(x){-}\varphi(y))K(x,y)\,dx\, dy\,.\label{lasty}
\end{eqnarray}
Call $I_j$ the last integral and and notice that by \rif{bound0}, \rif{approximateu} and \rif{convergence}, it follows that 
$u\equiv g$ a.e. outside $\Omega$; 
then decompose $I_j$ as
\begin{eqnarray} \  
\nonumber
I_j  & = & \int_{\Omega} \int_{\Omega}(\Phi(u_j(x){-}u_j(y))-\Phi(u(x){-}u(y)))(\varphi(x){-}\varphi(y))K(x,y)\,dx\,dy
\\ \nonumber & & +\int_{B_R \setminus \Omega} \int_{\Omega}(\Phi(u_j(x)-g_j(y))-\Phi(u(x)-g(y))) \varphi(x) K(x,y)\,dx\,dy
\\ \nonumber & & +\int_{\ern \setminus B_R} \int_{\Omega}(\Phi(u_j(x)-g_j(y))-\Phi(u(x)-g(y))) \varphi(x) K(x,y)\,dx\,dy
\\ \nonumber & & - \int_{\Omega}\int_{B_R \setminus \Omega}(\Phi(g_j(x)-u_j(y))-\Phi(g(x)-u(y))) \varphi(y) K(x,y)\,dx\,dy
\\ \nonumber & & - \int_{\Omega}\int_{\ern \setminus B_R} (\Phi(g_j(x)-u_j(y))-\Phi(g(x)-u(y))) \varphi(y) K(x,y)\,dx\,dy
\\& =: & I_{j,1} + I_{j,2}+ I_{j,3}- I_{j,4}- I_{j,5}\,,\label{tony}
\end{eqnarray}
where $\Omega \subset B_R(z)$, $z \in {\rm supp}\, \varphi$ and we are going to choose $B_R(z)$ large enough, eventually. We concentrate our attention on the terms $I_{j,1},I_{j,2}$ and $I_{j,3}$ since the estimation of the last two terms is completely analogous  to the ones for $I_{j,4}, I_{j,5}$. 
Accordingly, let us denote the corresponding integrands by
$$\psi_j(x,y)=(\Phi(u_j(x){-}u_j(y))-\Phi(u(x){-}u(y)))(\varphi(x){-}\varphi(y))K(x,y) $$
and
$$
\widetilde \psi_j(x,y)=(\Phi(u_j(x)-g_j(y))-\Phi(u(x)-g(y))) \varphi(x) K(x,y)\;.
$$
These are both pointwise defined in the way above for $x \not = y$; we extend them to zero be on the diagonal $x=y$. We now prove that $I_{j,1}\to 0$; to this aim we start noticing that 
$\psi_j(x,y) \to 0$ a.e. with respect to the Lebesgue measure in $\er^{2n}$ by \rif{convergence}$_3$. It is then sufficient to show that the sequence
$\{\psi_j\}$
is equintegrable in $\Omega \times \Omega$. To do this we prove that there exists $\eps >0$ such that $\{\psi_j\}$ is equibounded in $L^{1+\eps}(\Omega \times \Omega)$ for some $\eps >0$. For this, using \rif{thekernel} and \rif{monophi}, observe that with $h <s$ and $\eps >0$ chosen in such a way that $\eps n +[p(s-h)+h-1](1+\eps)\leq 0$ and $q= (p-1)(1+\eps)< \bar q$, we have 
\begin{eqnarray*}
[\psi_j(x,y)] ^{1+\eps}& \leq  &\frac{c\left[(|u(x){-}u(y)|^{p-1}+|u_j(x){-}u_j(y)|^{p-1})|\varphi(x){-}\varphi(y)|\right]^{1+\eps}}{|x{-}y|^{(n+sp)(1+\eps)}}\\
& \leq  &\frac{c\left[(|u(x){-}u(y)|^{p-1}+|u_j(x){-}u_j(y)|^{p-1})\right]^{1+\eps}\|D\varphi\|_{L^\infty}^{1+\eps}}{|x{-}y|^{n+h(p-1)(1+\eps)}
|x{-}y|^{\eps n +[p(s-h)+h-1](1+\eps)}}\\
& \leq  &c(U_h)^{(p-1)(1+\eps)}(x,y)+c (U_h^j)^{(p-1)(1+\eps)}(x,y)\,,
\end{eqnarray*}
where $c$ depends also on $\|D\varphi\|_{L^\infty}$ and $\diam\, \Omega$. We conclude by \rif{bound2} that $\{\psi_j\}$ is equibounded in $L^{1+\eps}(\Omega \times \Omega)$ 
and therefore $I_{j,1}\to 0$. We now come to the proof of $I_{j,2}\to 0$ and $I_{j,4}\to 0$; observe that 
$\widetilde \psi_j(x,y)\to 0$ a.e. Again we prove that $\{\tilde \psi_j\}$ is uniformly bounded in $L^{1+\eps}(\Omega \times B_R \setminus \Omega)$ for some $\eps >0$. This time we estimate 
\eqn{estiesti}
$$
\tilde \psi_j(x,y)\leq  \frac{c\left[|u(x)|{+}|u_j(x)|{+}|g(y)|{+}|g_j(y)|\right]^{p-1}|\varphi(x)|}{|x{-}y|^{(n+sp)}}\;. 
$$ 
We then set $d_1 :=\dist \,({\rm supp}\, \varphi, \partial \Omega)>0$ and $d_2 := \diam \, \Omega$, and observe that there exists a constant $c$ depending 
only on $n,s,p$ and the ratio $d_2/d_1$ such that, for any $z \in {\rm supp}\, \varphi$, it holds that 
\eqn{disti}
$$
\frac{ |y-z|^{n+sp}}{|y-x|^{n+sp}}\leq c
$$
whenever $(x,y) \in ({\rm supp}\, \varphi)  \times B_R  \setminus \Omega$; moreover, we have that 
$|y-x|^{-n-sp}\leq d_1^{-n-sp}$. Using this last fact and \rif{estiesti} we deduce
\begin{eqnarray*}
&&\int_{B_R \setminus \Omega} \int_{\Omega}[\tilde \psi_j(x,y)]^{1+\eps}\, dx\, dy \\
&& \quad \leq  
c(d_1)\int_{\Omega} (|u|+|u_j|)^{(p-1)(1+\eps)}\, dx+c(d_1)\int_{B_R} (|g|+|g_j|)^{(p-1)(1+\eps)}\, dy\leq c \,,
\end{eqnarray*}
where the last inequality holds for a constant $c$ independent of $j$, by \rif{bound3}, provided we choose $\eps$ in order to satisfy $(p-1)(1+\eps)< \bar q$ and we are also using \rif{bound0}. We therefore conclude that $I_{j,2}\to 0$; in a completely similar way, essentially exchanging the roles of $x$ and $y$, we can also prove that $I_{j,4}\to 0$. We now estimate $I_{j,3}$ and $I_{j,5}$. More precisely 
prove that for every $\delta >0$ there exists $R$ such that 
\eqn{limmisuppi}
$$
\limsup_{j \to \infty}\, |I_{j,3}|\leq \delta\qquad \mbox{and}\qquad \limsup_{j \to \infty}\, |I_{j,5}|\leq \delta\;. 
$$
We prove the first inequality in the above display, the proof of the second being completely analogous. 
By using \rif{estiesti}-\rif{disti}, 
we have 
\begin{eqnarray*}
&&\int_{\ern \setminus B_R} \int_{\Omega}\tilde \psi_j(x,y)\, dx\, dy =   
\int_{\ern \setminus B_R}\int_{{\rm supp}\, \varphi}\tilde \psi_j(x,y)\, dx\, dy\\
&&\qquad\leq  c\int_{\ern \setminus B_R} \int_{\Omega}\frac{(|u(x)|^{p-1}+|u_j(x)|^{p-1})}{|y{-}z|^{(n+sp)}}\, dx\, dy
\\
&&\qquad \qquad +  c\int_{\ern \setminus B_R}\frac{(|g(y)|^{p-1}+|g_j(y)|^{p-1})}{|y{-}z|^{(n+sp)}}\, dx\, dy
\\
&&\qquad\leq \frac{c}{R^{sp}} \int_{\Omega}(|u|+|u_j|)^{p-1}\, dx+  c\int_{\ern \setminus B_R}
\frac{|g(y)|^{p-1}+|g_j(y)|^{p-1}}{|y{-}z|^{(n+sp)}}\, dx\, dy\;.
\end{eqnarray*}
Using this last estimate together with Lemma \ref{lemma:g_j} (see in particular the assertion concerning \rif{bound00}) we 
conclude that \rif{limmisuppi} holds provided $R\equiv R(\delta)$ is chosen sufficiently large. 
We are now ready to conclude the proof: take $\delta>0$ and determine $R>0$ such that the inequalities in \rif{limmisuppi} hold. 
Recalling \rif{tony} we infer
$$
\limsup_{j \to \infty}\, |I_{j}|\leq \limsup_{j \to \infty}\, |I_{j,3}|+ \limsup_{j \to \infty}\, |I_{j,5}|\leq 2\delta\;,
$$
so that eventually letting $\delta \to 0$ we get that $I_j \to 0$. In turn, using this information in \rif{lasty} and recalling that the weak convergence of $\mu_j$ to $\mu$ we infer \rif{veryweak} and the proof is finally complete.


\section{Proof of Theorems \ref{pot0} and \ref{pot}} \label{sec:upper}
Let $\{u_j\} \subset W^{s,p}(\ern)$ be an approximating sequence for the SOLA $u$ with measure $\mu_j$ and boundary values $g_j$, as described in Definition \ref{soladef}. For $\varrho \leq r$, we define the comparison solution $v_j \in W^{s,p}(\ern)$ as
$$
\left \{
\begin{array}{ccc}
-\widetilde{\mathcal{L}}_{u_j}v_j = 0&\mbox{in}&B_{\varrho/2}(x_0)\\[4 pt]
v_j = u_j&\mbox{in}& \ern \setminus B_{\varrho/2}(x_0)\;, 
\end{array} \right. 
$$
eventually letting $w_j = u_j-v_j$; the operator $- \widetilde{\mathcal L}_{u_j}$ has been defined in \rif{abbreviated}.  
In the following we shall omit to denote the dependence on $x_0$, thereby writing, for instance, $E(u;\varrho)\equiv E(u;x_0,\varrho)$, ${\rm Tail}(u;t)\equiv {\rm Tail}(u;x_0, t)$ and so on; we recall that $q_*=\max\{1,p-1\}$. 
Let us first collect a few preliminary estimates. Triangle inequality yields
\begin{eqnarray}  
\nonumber && \left| \left(\mean{B_t} |u_j {-}(u_j)_{B_t} |^{q_*} \, dx \right)^{1/q_*} -
\left(\mean{B_t} |v_j {-}(v_j)_{B_t} |^{q_*} \, dx \right)^{1/q_*}  \right| \\
&& \qquad \leq \left(\mean{B_{t}} |w_j|^{q_*} \, dx \right)^{1/q_*}+ |(u_j)_{B_t}-(v_j)_{B_t}|
\leq 2 \left(\mean{B_{t}} |w_j|^{q_*} \, dx \right)^{1/q_*}\label{triangletri}
\end{eqnarray}
for any $t>0$. 
Since $w_j$ vanishes outside of $B_{\varrho/2}$, we obtain
\begin{eqnarray}  
\nonumber
\left(\mean{B_{t}} |w_j|^{q_*} \, dx \right)^{1/q_*} & \leq &  \left( \frac{|B_{\varrho/2}|}{|B_t|} \right)^{1/q_*}
\left(\mean{B_{\varrho/2}} |w_j|^{q_*} \, dx \right)^{1/q_*} 
\\  & \leq & 
 c \left( \frac{\varrho}{t} \right)^{n/q_*} \left(\mean{B_{\varrho/2}} |w_j|^{q_*} \, dx \right)^{1/q_*} \,,\label{recall1}
\end{eqnarray}
again for any $t>0$. 
Now Lemma~\ref{lemma:comp est sol}, implies for $p\geq 2$ that 
\[
\left(\mean{B_{\varrho/2}} |w_j|^{p-1} \, dx \right)^{1/(p-1)}  \leq c \left[  \frac{|\mu_j|(B_\varrho)}{\varrho^{n-sp}} \right]^{1/(p-1)}\,.
\]
Instead, for $2 -s/n< p<2$, Lemma~\ref{lemma:comp p<2} and Young's inequality with conjugate exponents $(1/(2-p),1/(p-1))$, yield
\[
\mean{B_{\varrho/2}} |w_j| \, dx  \leq \delta E(u_j;\varrho) + c \delta^{(p-2)/(p-1)} \left[ \frac{|\mu_j|(B_{\varrho})}{\varrho^{n-sp}} \right]^{1/(p-1)}
\]
for any $\delta \in (0,1)$. 
The content of the last three displays then leads to
\eqn{recall2}
$$
\left(\mean{B_{t}} |w_j|^{q_*} \, dx \right)^{1/q_*}  \leq  c \left( \frac{\varrho}{t} \right)^{n/q_*} 
 \left\{ \delta E(u_j; \varrho) +  c \delta^{-\eta} \left[  \frac{|\mu_j|(B_\varrho)}{\varrho^{n-sp}} \right]^{1/(p-1)} \right\}
$$
for any $t>0$ and $\delta\in (0,1)$, where $\eta := \max\{0,(2-p)/(p-1)\}.$ All the constants $c$ involved up to now depend only on $n,s,p,\Lambda$. Our goal is now to use the above comparison estimate together with the oscillation reduction result in Theorem~\ref{lemma:osc red} for $v$. 
Again for any $t >0$, we have
$$
A(u_j;t)  \leq cA(v_j;t) + c\left(\mean{B_{t}} |w_j|^{q_*} \, dx \right)^{1/q_*}\,
$$
by \rif{triangletri}. 
As for the Tail, recalling also \rif{recall1} and that $q_* \geq p-1$, we have
\begin{eqnarray*} 
&& \nonumber \hspace{-1cm} {\rm Tail}(u_j-(u_j)_{B_{t }}; t) \leq c{\rm Tail}((u_j-v_j)_{B_{t }}; t)+c {\rm Tail}(v_j -(v_j)_{B_t};t) 
\\
& &\nonumber\qquad \qquad\qquad \qquad\qquad  + c \left( t^{sp}  \int_{ B_{\varrho/2}\setminus B_{t}} \frac{|u_j-v_j|^{p-1}} {|x{-}x_0|^{n+sp}} \, dx \right)^{1/(p-1)}
\\ \nonumber &  &\qquad  \leq c|(u_j-v_j)_{B_{t}}|+c E(v_j;t) +c \left( \frac{\varrho}{t} \right)^{n/(p-1)}
 \left( \mean{B_{\varrho/2}} |w_j|^{q_*} \, dx \right)^{1/q_*}\\
&&\qquad \leq c D(\varrho, t)
\left( \mean{B_{\varrho/2}} |w_j|^{q_*} \, dx \right)^{1/q_*}  +  
  c E(v_j;t)\;,
\end{eqnarray*}
where we have set
\eqn{ilD}
$$
D(\varrho, t):= \left[\left( \frac{\varrho}{t} \right)^{n/(p-1)} +\left( \frac{\varrho}{t} \right)^{n/q_*}\right]\;.
$$
By also using~\eqref{triangletri} and~\rif{recall2} with $t = \varrho/2$, it follows that 
$$
E(u_j;t)   \leq  c D(\varrho, t)   \left\{ \delta E(u_j;\varrho) +  c \delta^{-\eta} \left[  \frac{|\mu_j|(B_\varrho)}{\varrho^{n-sp}} \right]^{1/(p-1)} \right\}+  c E(v_j;t)
$$
 and for any $t>0$.
In a completely analogous way we also obtain
$$
\nonumber E(v_j;t)   \leq   c D(\varrho, t)  \left\{ \delta E(u_j;\varrho) +  c \delta^{-\eta} \left[\frac{|\mu_j|(B_\varrho)}{\varrho^{n-sp}} \right]^{1/(p-1)} \right\}+  c E(u_j;t)\;.
$$
For any $\sigma \in (0,1/2)$, recall the estimate from Theorem~\ref{lemma:osc red}:
$$
E(v_j;\sigma \varrho)  \leq  c   \sigma^\alpha \left( \frac{\varrho}{r} \right)^{sp/q_*} E(v_j;r) +c  \sigma^\alpha \int_\varrho^{r}  \left(  \frac{\varrho}{t}\right)^{sp/q_*} E(v_j;t)  \, \frac{dt}{t}
$$
that holds whenever $ \varrho\leq r\leq R$. 
Combining the last three displays, and recalling that $ D(\varrho, \sigma \varrho) =D(1, \sigma ) \geq 1$, leads to
\begin{eqnarray*} 
\nonumber
E(u_j;\sigma \varrho) & \leq &  c  \sigma^\alpha \left( \frac{\varrho}{r} \right)^{sp/q_*} E(u_j;r)  +c   \sigma^\alpha \int_\varrho^{r}  \left(  \frac{\varrho}{t}\right)^{sp/q_*} E(u_j;t)  \, \frac{dt}{t}
\\ &&\quad  \qquad + c D(1, \sigma )   \left\{ \delta E(u_j;\varrho) +  c \delta^{-\eta} \left[  \frac{|\mu_j|(B_\varrho)}{\varrho^{n-sp}} \right]^{1/(p-1)} \right\}\,. 
\end{eqnarray*}
By the fact that $u_j \to u$ in $L_{\rm loc}^{q_*}(\ern)$, $g_j \to g$ as in~\eqref{eq:g_j conv}, and weak convergence of $\mu_j$ to $\mu$ as $j \to \infty$, that is \rif{eq:meas conv cond}, we obtain that
\begin{eqnarray} \label{eq:decay comp appl}
\nonumber
E(u;\sigma \varrho) & \leq &  c  \sigma^\alpha \left( \frac{\varrho}{r} \right)^{sp/q_*} E(u;r)  + c   \sigma^\alpha \int_\varrho^{r}  \left(  \frac{\varrho}{t}\right)^{sp/q_*} E(u;t)  \, \frac{dt}{t}
\\ && \quad  \qquad + c D(1, \sigma )   \left\{ \delta E(u;\varrho) +  c \delta^{-\eta} \left[  \frac{|\mu|(\overline{B_\varrho})}{\varrho^{n-sp}} \right]^{1/(p-1)} \right\}
\end{eqnarray}
holds for the original SOLA $u$ considered in the theorem. Next, we multiply the previous estimate by $\varrho^{-1}$, 
and integrate the resulting inequality on $[\varrho, r]$. Renaming the variables once again yields
\begin{eqnarray} \label{eq:iter mono} 
\nonumber\int_\varrho^r E(u;\sigma t) \, \frac{dt}{t }  
&\leq &    c\sigma^\alpha  \int_\varrho^r \left( \frac{t}{r} \right)^{sp/q_*}   \, \frac{dt}{t } \,E(u;r)
\\ \nonumber  && + c \sigma^\alpha \int_\varrho^r  t^{sp/q_*} \int_t^r E(u;\tau) \, \frac{d\tau}{\tau^{1+sp/q_*}} \, \frac{dt}{t }
\\ && + c\delta D(1, \sigma)    \int_\varrho^r E(u;t) \, \frac{dt}{t }   +  \frac{cD(1, \sigma)}{\delta^{\eta}}  \int_\varrho^r \left[  \frac{|\mu|(\overline{B_t})}{t^{n-sp}} \right]^{1/(p-1)}  \, \frac{dt}{t }.
\end{eqnarray}
For the second term on the right hand side, integration by parts yields 
\begin{eqnarray} \  
\nonumber && \int_\varrho^r  t^{sp/q_*} \int_t^r E(u;\tau) \, \frac{d\tau}{\tau^{1+sp/q_*}} \, \frac{dt}{t }  
\\ \nonumber  && \qquad = 
\frac{t^{ sp/q_*}}{ sp/q_*} 
\int_t^r E(u;\tau) \, \frac{d\tau}{\tau^{1+sp/q_*}} \bigg|_{t=\varrho}^r
+ \frac{1}{ sp/q_*}  \int_\varrho^r E(u;t) \, \frac{dt}{t }  
\\ \nonumber  && \qquad \leq  \frac{2}{ sp/q_*}  \int_\varrho^r E(u;t) \, \frac{dt}{t }    \,.
\end{eqnarray}
Moreover, we notice that
\[
\int_\varrho^r \left( \frac{t}{r} \right)^{sp/q_*}   \, \frac{dt}{t } \leq \frac{1}{ sp/q_*} \,.
\]
Therefore,~\eqref{eq:iter mono} and the content of the two last displays imply also
\begin{eqnarray*}
\nonumber\int_\varrho^r E(u;\sigma t) \, \frac{dt}{t }  
&\leq &  c\sigma^\alpha  E(u;r) +
c\left(\sigma^\alpha + \delta D(1, \sigma)\right)\int_\varrho^r E(u;t) \, \frac{dt}{t } 
\\  &&\qquad\qquad\quad   +  \frac{cD(1, \sigma)}{\delta^{\eta}}  \int_\varrho^r \left[  \frac{|\mu|(\overline{B_t})}{t^{n-sp}} \right]^{1/(p-1)}  \, \frac{dt}{t }  
\end{eqnarray*}
where $c \equiv c(n,s,p,\Lambda)$. Next, changing variables gives
\begin{eqnarray} \  \nonumber
\int_\varrho^r E(u;\sigma t) \, \frac{dt}{t }   & = &  \int_{\sigma \varrho}^{\sigma r} E(u;t)  \, \frac{dt}{t } 
\\ \nonumber  & = & \int_{\sigma \varrho}^{r} E(u;t)  \, \frac{dt}{t } - \int_{\sigma r}^{r} E(u;t)  \, \frac{dt}{t }
\\ \nonumber  & \geq & \int_{\sigma \varrho}^{r} E(u;t) \, \frac{dt}{t } - c(\sigma) E(u;r) \int_{\sigma r}^{r}   \, \frac{dt}{t }
\\ \nonumber  & \equiv &  \int_{\sigma \varrho}^{r} E(u;t) \, \frac{dt}{t } - c(\sigma)  E(u;r) \,,
\end{eqnarray}
where the second-last inequality follows by Lemma~\ref{lemma:basic E}(4). 
Matching the last two inequalities leads to
\begin{eqnarray} \  
\nonumber
\int_{\sigma \varrho}^r E(u;t) \, \frac{dt}{t }   & \leq & 
\bar c(\sigma^{\alpha} + \delta D(1, \sigma) )\int_\varrho^r E(u;t) \, \frac{dt}{t } +   c_*(\sigma)  E(u;r) 
\\  \nonumber && \qquad \ + \frac{c_*(\sigma)}{\delta^\eta}   \int_\varrho^r \left[  \frac{|\mu|(\overline{B_t})}{t^{n-sp}} \right]^{1/(p-1)}  \, \frac{dt}{t }  \,,
\end{eqnarray}
where $\bar c\equiv \bar c (n,s,p,\Lambda)$ is independent of $\sigma$, while $c_*(\sigma)$ blows-up when $\sigma \to 0$. 
Choosing first $\sigma$ and then $\delta$ small enough in order to have 
\[
\bar c \sigma^{\alpha}  = \frac14 \qquad \mbox{and} \qquad 
\bar c  \delta D(1, \sigma)   = \frac14\,,
\]
we obtain, after reabsorption and eventually letting $\varrho \to 0$, that 
\eqn{eq:pot est prel 0} 
$$
\int_{0}^r E(u;t) \, \frac{dt}{t }    \leq  c  E(u;r)  + c \int_0^r \left[  \frac{|\mu|(B_t)}{t^{n -ps}} \right]^{1/(p-1)} 
 \, \frac{dt}{t} 
$$
holds for a constant $c$ depends only on $n,s,p,\Lambda$. 
This proves the part of estimate \rif{estipot0} concerning the first term on the left hand side. Notice that we have used that 
\eqn{keep in mind}
$$
\int_0^r \left[  \frac{|\mu|(\overline{B_t})}{t^{n -ps}} \right]^{1/(p-1)}\, \frac{dt}{t}  =\int_0^r \left[  \frac{|\mu|(B_t)}{t^{n -ps}} \right]^{1/(p-1)}\, \frac{dt}{t} \;.
$$
We proceed to prove that the limit in \rif{precise} exists and to complete the proof of estimate \rif{estipot0}. For this, let $0< \tilde \varrho  \leq  \varrho/2 < r/8$ and find $k \in \en$ and $\theta \in (1/4,1/2]$ such that $\tilde \varrho = \theta^k \varrho$. Then
\begin{eqnarray*}
|(u)_{B_\varrho} - (u)_{B_{\tilde \varrho}}| &\leq& \sum_{j=0}^{k-1} |(u)_{B_{\theta^j \varrho}}-(u)_{B_{\theta^{j+1} \varrho}}|\\
& \leq & \theta^{-n/q_*}\sum_{j=0}^{k-1} A(u;\theta^j \varrho)\leq \theta^{-n/q_*}\sum_{j=0}^{k-1} E(u;\theta^j \varrho)\,.
\end{eqnarray*}
Furthermore, using Lemma~\ref{lemma:basic E}(2), we have that 
\begin{eqnarray*}
\sum_{j=0}^{k-1} E(u;\theta^j \varrho) &= &\frac{1}{\log (1/\theta)} \sum_{j=0}^{k-1} 
\int_{\theta^j \varrho}^{\theta^{j-1} \varrho}E(u;\theta^j \varrho) \, \frac{dt}{t} 
\\
 &\leq &c \sum_{j=0}^{k-1} 
\int_{\theta^j \varrho}^{\theta^{j-1} \varrho}E(u;t) \, \frac{dt}{t} 
 \leq c 
\int_{\tilde  \varrho}^{ \varrho/\theta}E(u;t) \, \frac{dt}{t} \;, 
\end{eqnarray*}
so that 
, using the content of the last three displays and recalling that $\varrho \leq r/4\leq r/\theta$, it follows
\eqn{recall3}
$$
|(u)_{B_\varrho} {-} (u)_{B_{\tilde \varrho}}|  \leq c \int_{\tilde  \varrho}^{\varrho/\theta} E(u;t) \, \frac{dt}{t}  \;.
$$
In turn, recalling that $\varrho/\theta \leq r/(4\theta)\leq r$ and using \rif{eq:pot est prel 0} we have 
\eqn{recall4}
$$
|(u)_{B_\varrho} {-} (u)_{B_{\tilde \varrho}}|  \leq  c  E(u;r)  + c {\bf W}_{s,p}^\mu(x_0,r) \;.
$$
On the other hand, by~\eqref{eq:pot est prel 0} the finiteness of $
{\bf W}_{s,p}^\mu(x_0,r)
$
implies the finiteness of the right hand side in \rif{recall4} and therefore \rif{recall3} readily implies that $\{(u)_{B_\varrho}\}$ is a Cauchy net. As a consequence, the limit in \rif{precise} exists and thereby defines the pointwise precise representative of $u$ at $x_0$. Letting $\tilde \varrho \to 0$ in \rif{recall4} and taking $\varrho =r/4$ then gives 
$$
|(u)_{B_{r/4}} {-} u(x_0)|  \leq  c  E(u;r)  + c {\bf W}_{s,p}^\mu(x_0,r) \;.
$$
that holds whenever $\varrho \leq r/4$. On the other hand notice that by also using \rif{tt2} we have
$$
|(u)_{B_r} {-} (u)_{B_\varrho}| \leq 4^n A(u;r) \leq c  E(u;r) 
$$
so that the last two displays and triangle inequality finally give \rif{estipot0} (recall also \rif{eq:pot est prel 0}). This completes the proof of Theorem \ref{pot}. Finally, estimate \rif{stimawolff1} follows from \rif{estipot0} via elementary manipulations. 
\section{Proof of Theorem~\ref{thm:cont}}
The proof goes in two steps: first we prove that $u$ is locally VMO-regular in $\Omega'$, this means that if $\Omega'' \Subset \Omega'$, then 
\eqn{VMOreg}
$$
\lim_{t\to 0} \, E(u; x, t)=0
$$
holds uniformly in $x \in \Omega''$. Once this information is available, the continuity of $u$ follows easily from estimate 
\rif{estipot0} and the fact that $\Omega''$ is arbitrary.
Indeed, for fixed $r \leq \dist(\partial \Omega'', \Omega')/100$, consider the continuous function defined by $\Omega'' \ni x \mapsto (u)_{B_r(x)}$. From \rif{estipot0}, the assumption 
\rif{decay} and \rif{VMOreg} it follows that the net of functions $\{(u)_{B_r(x)}\}_r$ converges to $u(x)$ uniformly in 
$\Omega''$ and therefore the continuity of $u$ is proved. Notice that, by Theorem \ref{pot0} and the assumptions of Theorem \ref{thm:cont}, we already know that every point $x$ is a Lebesgue point for $u$. It remains to prove \rif{VMOreg}. First of all, we observe that Theorem \ref{pot0} implies that $u$ is locally bounded in $\Omega$; in particular, we have that $\|u\|_{L^{\infty}(\Omega')}<\infty$. 
Moreover, from the definition of Wolff potentials, assumption \rif{decay} in particular implies that the limit 
\eqn{unimu}
$$
\lim_{t\to 0} \,\frac{|\mu|(B_t(x))}{t^{n-sp}}=0
$$
is uniform with respect to $x \in \Omega'$. Take a ball $\mathcal B\equiv B_R(z)$ such that $ \Omega \subset \mathcal B$ and  $z \in \Omega'$. Notice that the inequality 
$
|y-x|^{-n-sp} \leq c |y-z|^{-n-sp}
$
holds whenever $x \in \Omega' $, $y \in \ern\setminus \mathcal B$, for a constant $c$ depending only on $n,s,p,$ and 
$\dist(\Omega'', \partial \mathcal B)$, and a fortiori on $\dist(\Omega'', \partial \Omega')$. 
Using also this last fact, with $r\leq \dist(\partial \Omega'', \Omega')/100$ and $x \in \Omega''$ we have
\begin{eqnarray*}
E(u;x, r) &\leq& A(u;x, r)+ c \|u\|_{L^{\infty}(B_r(x))}+  c{\rm Tail}(u;x,r)\\
&\leq &c\|u\|_{L^{\infty}(\Omega')}+ c\left( r^{sp} \int_{\ern\setminus \mathcal B} \frac{|g(y)|^{p-1}}{ |x{-}y|^{n+sp}}\,d y \right)^{1/(p-1)}\\
&& \qquad \quad + c\left( r^{sp} \int_{ \mathcal B\setminus \Omega'} \frac{|u(y)|^{p-1}}{ |x{-}y|^{n+sp}}\,d y \right)^{1/(p-1)}\\
&& \qquad \quad + c\left( r^{sp} \int_{\Omega'  \setminus B_r(x)} \frac{|u(y)|^{p-1}}{ |x{-}y|^{n+sp}}\,d y \right)^{1/(p-1)}\\
&\leq &c\|u\|_{L^{\infty}(\Omega')}+ c\left( r^{sp} \int_{\ern\setminus \mathcal B}\frac{ |g(y)|^{p-1}}{ |y{-}z|^{n+sp}}\,d y \right)^{1/(p-1)}\\
&& \qquad \quad + \frac{c}{\dist(\partial \Omega'', \Omega')^{\frac{n}{p-1}}}
\left(\int_{\mathcal B} |u(y)|^{p-1}\,d y \right)^{1/(p-1)}\\
&& \qquad \quad  + c\left( r^{sp} \int_{\ern  \setminus B_r(x)} \frac{\|u\|_{L^{\infty}(\Omega')}^{p-1}}{ |x{-}y|^{n+sp}}\,d y \right)^{1/(p-1)}\\
&\leq &c\|u\|_{L^{\infty}(\Omega')}+ c\left(\frac{r}{R}\right)^{sp}{\rm Tail}(g;z,R)\\
&& \qquad \quad + \frac{c}{\dist(\partial \Omega'', \Omega')^{\frac{n}{p-1}}}
\left(\int_{\mathcal B} |u(y)|^{p-1}\,d y \right)^{1/(p-1)}:= cH\;.
\end{eqnarray*}
Notice that $H$ is independent of $x$; indeed all the subsequent estimates will be uniform with respect to $x \in \Omega''$. 
At this point \rif{eq:decay comp appl} for $\varrho=r\leq \dist(\partial \Omega'', \Omega')/100$ yields
$$
E(u;x,\sigma r)  \leq   c  \sigma^\alpha H + c D(1, \sigma )   \left\{ \delta H +
  c \delta^{-\eta} \left[  \frac{|\mu|(B_r)}{r^{n-sp}} \right]^{1/(p-1)} \right\}
$$
whenever $x \in \Omega''$ and where $D(1, \sigma )$ has been defined in \rif{ilD}. With $\eps >0$ being fixed, we now first choose $\tilde \sigma$ and then $\delta$ in order to satisfy   
$$
c  \sigma^\alpha H \leq \frac{\eps}{4}\qquad  \mbox{and} \qquad c D(1, \sigma )  \delta H\leq \frac{\eps}{4}
$$
as soon as $\sigma \leq \tilde \sigma$; then, using \rif{unimu} we choose $\tilde r_\eps$ such that 
$$
c D(1, \sigma )\delta^{-\eta} \left[  \frac{|\mu|(B_r)}{r^{n-sp}} \right]^{1/(p-1)}\leq \frac{\eps}{4}\qquad \mbox{for every}
 \quad r \leq \tilde r_\eps\;.$$
 All in all, we have proved that for every $\eps>0$ there exists a radius $r_\eps= \tilde \sigma \tilde r_\eps$, depending also on $n,s,p,\Lambda, \dist(\partial \Omega'', \Omega')$ and $H$, but not on the point $x\in \Omega''$, such that $r \leq r_\eps$ implies $
E(u;x, r)  \leq \eps. $ This means that \rif{VMOreg} holds and the proof is complete. 
\section{Proof of Theorem \ref{thm:lower}}
Throughout the section $\mu \in \mathcal M(\ern)$ is considered to be a non-negative measure and we are assuming also that $p < n/s$. Moreover, as usual and with no loss of generality, we shall assume throughout Lemmas \ref{lemma:cacc super}-\ref{ultimolemma} that the additional conditions in \rif{specialp}. This is possible by re-writing $- \mathcal{L}_{\Phi} u \geq 0 \, (=\mu)$ as $\widetilde{\mathcal{L}}_u u\geq0 \, (=\mu)$ and the operator $\widetilde{\mathcal{L}}_u$ has been defined in \rif{abbreviated}. 
Once again, this will allows us to use the results from \cite{DKP2}. 
In fact, let us first recall two results from~\cite{DKP2}, that hold in the full range $p>1$.  
\begin{lemma}[Caccioppoli estimate \cite{DKP2}]\label{lemma:cacc super}
Let $p\in(1,\infty)$, $q \in (1,p)$, $d>0$ and let $u$ be a weak supersolution to \trif{eqvw} with $\mu=0$, such that $u\geq 0$ in $B_R(x_0)\subset\Omega$. Then, for 
$
w:= (u+d)^{1-q/p}
$
the following inequality:
\begin{eqnarray*}
&&\hspace{-0.5cm} \int_{B_r}\int_{B_r} \frac{|w(x)\varphi(x){-}w(y)\varphi(y)|^p}{|x{-}y|^{n+sp}}\, dx\, dy \nonumber\\
&&\qquad \quad \leq c\int_{B_r}\int_{B_r} (\max\{w(x),w(y)\})^p \frac{|\varphi(x){-}\varphi(y)|^p}{|x{-}y|^{n+sp}} \, dx\, dy \\
&& \qquad \quad \quad +c \,\bigg\{ \sup_{y\in \text{\rm supp}\, \varphi} \int_{\ern \setminus B_r}|x{-}y|^{-n-sp}\, dx \nonumber
\\ &&\qquad \qquad \quad\qquad \qquad  +  d^{1-p}R^{-sp}
\big[\text{\rm Tail}(u_-; x_0, R)\big]^{p-1} \bigg\}\left(\int_{B_r} (w\varphi)^p\, dx \right) 
\nonumber
\end{eqnarray*} 
holds for $B_r\equiv B_r(x_0)\subset B_{3R/4}(x_0)$ and any nonnegative $\varphi \in C^\infty_0(B_r)$, where the constant $c$ depends only on $s,p,\Lambda,q$ and 
$u_-:=\max\{-u,0\}$. 
\end{lemma}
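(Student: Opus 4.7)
The plan is to test the weak supersolution inequality, in the symmetrized form $-\widetilde{\mathcal{L}}_u u \ge 0$, against the admissible nonnegative function $\eta := \varphi^p (u+d)^{1-q}$ with $\varphi \in C_0^\infty(B_r)$. Since $q>1$ and $u+d \ge d>0$ on $\mathrm{supp}\,\varphi \subset B_r \subset B_{3R/4}$, the function $\eta$ is bounded and belongs to $W^{s,p}(\ern)$ (it is the composition of $u$ with a globally Lipschitz, bounded function times $\varphi^p$), and is therefore admissible after a standard truncation/density argument. Writing
$$
\int_{\ern}\!\int_{\ern}\! |u(x){-}u(y)|^{p-2}(u(x){-}u(y))(\eta(x){-}\eta(y))\widetilde{K}(x,y)\,dx\,dy \ge 0,
$$
we split the integration region as $(B_r\times B_r) \cup \bigl((\ern\setminus B_r)\times B_r\bigr)\cup\bigl(B_r\times(\ern\setminus B_r)\bigr)$, with the last two contributions equal by symmetry of $\widetilde K$.

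On the local piece $B_r\times B_r$ the heart of the matter is a pointwise algebraic inequality of the form
$$
-|a-b|^{p-2}(a-b)\Bigl(\frac{A^p}{(a+d)^{q-1}} - \frac{B^p}{(b+d)^{q-1}}\Bigr) \ge \frac{|w(a)A-w(b)B|^p}{c} - c(\max\{w(a),w(b)\})^p|A-B|^p,
$$
valid for $a,b\ge 0$ and $A,B\ge 0$, with $w(t):=(t+d)^{1-q/p}$ and $c\equiv c(p,q)$ blowing up as $q\to p^-$. This is the standard Moser-type inequality used in~\cite{DKP2} (compare with~\cite{giusti} in the local case); it is proved by distinguishing $a\le b$ from $b\le a$ and exploiting the convexity and monotonicity of $t\mapsto(t+d)^{1-q/p}$ together with $q<p$. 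Substituting $(a,b,A,B)=(u(x),u(y),\varphi(x),\varphi(y))$ and integrating against $\widetilde K(x,y)|x-y|^{-(n+sp)}\gtrsim |x-y|^{-(n+sp)}/\Lambda$ yields exactly the local part of the desired left-hand side minus the first term on the right.

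For the nonlocal cross term we split $\ern\setminus B_r = (B_R\setminus B_r)\cup(\ern\setminus B_R)$. On the annulus $B_R\setminus B_r$ we have $u(x)\ge 0$, so $(u(x)+d)^{1-q}\le d^{1-q}$; the corresponding contribution is dominated, after using $(\max\{w(u(x)),w(u(y))\})^p \ge c\,d^{p-q}$, by the supremum term $\sup_{y\in\mathrm{supp}\,\varphi}\int_{\ern\setminus B_r}|x-y|^{-n-sp}\,dx$ times $\int_{B_r}(w\varphi)^p\,dx$. On $\ern\setminus B_R$, where $u$ may be negative, we decompose $u=u_+-u_-$: the $u_+$ part carries the favourable sign and is discarded, while the $u_-$ contribution is estimated by $|u(x)-u(y)|^{p-2}(u(x)-u(y))\le c\,u_-(x)^{p-1}$ for $x\in\ern\setminus B_R$ and $y\in B_r$, using also $|x-y|\simeq|x-x_0|$ for $x$ outside $B_R$ and $y\in B_r\subset B_{3R/4}$; combined with $(u(y)+d)^{1-q}\le d^{1-q}\le c\,d^{1-p}w(u(y))^p$, this produces precisely the term $d^{1-p}R^{-sp}[\tail(u_-;x_0,R)]^{p-1}\int_{B_r}(w\varphi)^p\,dx$.

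The main obstacle is the pointwise inequality above: one must extract $|w(a)A-w(b)B|^p$ with the correct Leibniz-type remainder $(\max\{w(a),w(b)\})^p|A-B|^p$ while preserving a constant that depends only on $p,q,\Lambda$. The delicate regime is when $a$ and $b$ are comparable but both close to $0$ (so $(a+d)^{1-q}$ is close to $d^{1-q}$), which forces a careful use of the elementary inequality \eqref{eq:coer} and of the restriction $q<p$; this is exactly where the constant degenerates as $q\uparrow p$. Everything else is bookkeeping: splitting the domain, reabsorbing lower-order $(w\varphi)^p$ terms, and verifying admissibility of $\eta$ by truncation.
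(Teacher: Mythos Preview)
The paper does not give its own proof of this lemma: it is quoted verbatim from \cite{DKP2} as one of ``two results from~\cite{DKP2}'' recalled at the beginning of the section, and used without further argument. Your sketch is precisely the standard Moser-type argument carried out in \cite{DKP2}: test the supersolution inequality with $\eta=\varphi^p(u+d)^{1-q}$, use the pointwise algebraic inequality on the diagonal block $B_r\times B_r$, and estimate the off-diagonal contribution via the Tail. So in substance you are reproducing the cited proof rather than offering an alternative.

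One small point worth tightening in your cross-term discussion: with your convention $x\in\ern\setminus B_r$, $y\in B_r$, the quantity you must bound from above is $-\Phi_p(u(x)-u(y))\eta(y)=\Phi_p(u(y)-u(x))\eta(y)$, not $\Phi_p(u(x)-u(y))$ itself; the clean way is to observe that $(u(y)-u(x))_+^{p-1}\le c\bigl[(u(y)+d)^{p-1}+u_-(x)^{p-1}\bigr]$ and that $(u(y)+d)^{p-1}\eta(y)=(w(y)\varphi(y))^p$ exactly, which immediately produces the $\sup\!\int_{\ern\setminus B_r}|x-y|^{-n-sp}\,dx$ term over the \emph{entire} complement of $B_r$ (not just the annulus), while the $u_-(x)^{p-1}$ part---nonzero only on $\ern\setminus B_R$---gives the Tail contribution after using $(u(y)+d)^{1-q}\le d^{1-q}\le d^{1-p}w(y)^p$. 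Your remark that ``$(u(x)+d)^{1-q}\le d^{1-q}$ on the annulus'' is not the relevant bound here.
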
  
\begin{theorem}[Weak Harnack inequality \cite{DKP2}] \label{thm:weak Harnack}
For any $s\in (0,1)$ and any $p \in (1,n/s)$, let $u$ be a weak supersolution to \trif{eqvw} with $\mu=0$ such that $u\geq 0$ in $B_R\equiv B_R({x_0})\subset\Omega$. Then the following estimate holds for any $B_{r}\equiv B_{r}({x_0})\subset B_{R/2}(x_0)$ and for any positive number $\gamma< n(p-1)/(n-sp)$:
$$
\left( \mean{B_r} u^\gamma \right)^{1/\gamma} \leq
c \inf_{B_{2r}} u 
+ c \left(\frac{r}{R}\right)^{\frac{sp}{p-1}} \text{\rm Tail}(u_-; {x_0},R),
$$
where the constant $c$ depends only on $n,s,p, \Lambda$. 
\end{theorem}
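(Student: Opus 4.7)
The plan is to prove the weak Harnack inequality via a Moser-type iteration combined with a logarithmic ($BMO$) estimate and the Bombieri--Giusti abstract lemma, with the tail contribution absorbed by a shift of $u$. Set $d := (r/R)^{sp/(p-1)}\tail(u_-;x_0,R)$ and work with the nonnegative supersolution $\tilde u := u + d$ (strictly positive after an auxiliary $+\eps$ regularization, see Remark~\ref{costanti}). The shift is chosen so that the tail term $d^{1-p}R^{-sp}[\tail(u_-;x_0,R)]^{p-1}$ appearing inside the braces of Lemma~\ref{lemma:cacc super} is precisely of order $r^{-sp}$; after replacement the Caccioppoli estimate for $\tilde u^{1-q/p}$ reads, up to the factor $(\sigma-\sigma')^{-\theta}$, like the local Moser inequality.

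For the \emph{negative-exponent branch}, fix $q\in(1,p)$, apply Lemma~\ref{lemma:cacc super} to $\tilde u$ with $w=\tilde u^{1-q/p}$ and a standard cutoff supported between concentric balls $B_{\sigma'r}\subset B_{\sigma r}$, and combine with the fractional Sobolev embedding $[\,\cdot\,]_{W^{s,p}(B_r)}^p\gtrsim (\mean{B_r}|\cdot|^{p^*_s}\,dx)^{p/p^*_s}$, $p^*_s=np/(n-sp)$. This yields a reverse Hölder inequality
\[
\left(\mean{B_{\sigma'r}}\tilde u^{-(q-1)\chi}\,dx\right)^{1/\chi}\leq\frac{c}{(\sigma-\sigma')^\theta}\mean{B_{\sigma r}}\tilde u^{-(q-1)}\,dx,\qquad\chi=\frac{n}{n-sp}>1,
\]
whose Moser iteration in $q$ gives $\sup_{B_r}\tilde u^{-1}\leq c(\mean{B_{2r}}\tilde u^{-\beta_0}\,dx)^{1/\beta_0}$ for any $\beta_0>0$. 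Running the same Caccioppoli estimate with $q>p$ (so that $w$ is now a genuinely negative power of $\tilde u$) and iterating upwards produces, for every $0<\gamma_1<\gamma_2<n(p-1)/(n-sp)$, a self-improvement
\[
\left(\mean{B_r}\tilde u^{\gamma_2}\,dx\right)^{1/\gamma_2}\leq c\left(\mean{B_{2r}}\tilde u^{\gamma_1}\,dx\right)^{1/\gamma_1}.
\]
The threshold $n(p-1)/(n-sp)$ comes from the fact that the iteration closes only below the integrability exponent of the fundamental solution $|x|^{(sp-n)/(p-1)}$.

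The \emph{logarithmic estimate} is obtained by testing the supersolution inequality with $\varphi=\tilde u^{1-p}\eta^p$ and using an algebraic inequality of the form $|a-b|^{p-2}(a-b)(a^{1-p}-b^{1-p})\geq c_p|\log(a/b)|^p$ for $a,b>0$, together with a careful splitting of the tail integral at scale $R$: the shift $d$ absorbs the negative-part tail and one obtains
\[
\int_{B_r}\int_{B_r}\frac{|\log\tilde u(x)-\log\tilde u(y)|^p}{|x-y|^{n+sp}}\,dx\,dy\leq c\,r^{n-sp},
\]
which by fractional Poincaré inequality yields a uniform bound $\|\log\tilde u\|_{BMO(B_r)}\leq c$. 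The Bombieri--Giusti lemma then combines this $BMO$ bound with the negative-branch reverse Hölder estimate to produce a small $\gamma_0>0$ with $(\mean{B_r}\tilde u^{\gamma_0}\,dx)^{1/\gamma_0}\leq c\inf_{B_{2r}}\tilde u$; chaining with the positive-exponent self-improvement lifts $\gamma_0$ to any $\gamma<n(p-1)/(n-sp)$, and unwinding $\tilde u=u+d$ via $\inf_{B_{2r}}\tilde u\leq \inf_{B_{2r}}u+d$ gives the stated inequality.

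The main technical obstacle is the logarithmic estimate: the cross-terms generated by the symmetric nonlocal kernel do not telescope as cleanly as the divergence-form flux terms in the local case, and one must prove a sharp pointwise algebraic inequality for $\Phi_p(a-b)(a^{1-p}-b^{1-p})$ that survives the long-range interactions. The scale $R$ at which the tail is measured and the appearance of the precise factor $(r/R)^{sp/(p-1)}$ in the final bound are both forced by this step, as they encode the cost of transferring the tail information from $B_R$ down to the small ball $B_r$ through the shift $d$.
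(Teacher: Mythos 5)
First, note that the paper does not prove this theorem at all: it is imported verbatim from \cite{DKP2} (together with Lemma \ref{lemma:cacc super}), so there is no in-paper proof to compare against. Your sketch is essentially a reconstruction of the Di Castro--Kuusi--Palatucci argument, and its architecture is sound: the shift $d=(r/R)^{sp/(p-1)}\tail(u_-;x_0,R)$ does normalize the tail term in the Caccioppoli estimate to order $r^{-sp}$; the logarithmic estimate plus a fractional Poincar\'e inequality gives the BMO bound on $\log(u+d)$; and the Bombieri--Giusti crossover followed by the positive-exponent self-improvement up to $n(p-1)/(n-sp)$ is exactly how the theorem is established in the source.

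However, you have swapped the two Moser branches. With $q\in(1,p)$ one has $w^p=\tilde u^{\,p-q}$, a \emph{positive} power with exponent $t=p-q\in(0,p-1)$; Caccioppoli plus Sobolev then gives the forward reverse-H\"older step $\bigl(\mean{B_{\sigma' r}}\tilde u^{\,t\chi}\,dx\bigr)^{1/\chi}\le c(\sigma-\sigma')^{-\theta}\mean{B_{\sigma r}}\tilde u^{\,t}\,dx$ with $\chi=n/(n-sp)$, and iterating in $t$ up to $p-1$ is precisely what produces the threshold $(p-1)\chi=n(p-1)/(n-sp)$ --- this is the positive-exponent branch, not the negative one as you claim. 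Conversely, the negative-exponent iteration needed to bound $\sup_{B}\tilde u^{-1}$ requires $w^p=\tilde u^{\,p-q}$ with $p-q<0$ arbitrarily negative, i.e.\ $q>p$ arbitrarily large; your displayed inequality for $\tilde u^{-(q-1)}$ with $q\in(1,p)$ is not what Lemma \ref{lemma:cacc super} yields. Note moreover that Lemma \ref{lemma:cacc super}, as quoted in this paper, is restricted to $q\in(1,p)$, so the negative branch is not covered by the stated lemma and needs the $q>p$ version from \cite{DKP2}. These points are fixable, but as written the iteration scheme does not close.
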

\begin{lemma} \label{lemma:lower 1} Let $u$ be a weak solution to \trif{eqvw} with $\mu \in C^{\infty}_0(\er^n)$, such that $u \geq 0$ in $B_{4r}\equiv B_{4r}(x_0)\subset \Omega$ and $\mu \geq 0$.
 Then there exists a constant $c\equiv c(n,s,p,\Lambda)$ such that the following inequality holds:
 \begin{eqnarray}\label{eq:lower 1}
\nonumber
\frac{\mu(B_r)}{r^{n-sp}} & \leq &
 \frac{c}{r^{1-sp}} \int_{B_{3r/2}} \mean{B_{3r/2}} \frac{|u(x){-}u(y)|^{p-1}}{|x{-}y|^{n+sp-1}}   \, dx \, dy
\\  & &  \qquad \qquad + c \left[ \inf_{B_{r}} u+ \text{\rm Tail}(u _-; {x_0},4r) \right]^{p-1}\;.
\end{eqnarray}
\end{lemma}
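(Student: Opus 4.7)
The plan is to test the weak formulation against a suitable cutoff and then invoke the weak Harnack inequality. First I would fix $\varphi\in C_0^\infty(B_{5r/4}(x_0))$ with $\varphi\equiv 1$ on $B_r(x_0)$, $0\le\varphi\le 1$ and $|D\varphi|\le c/r$. Since $\mu\ge 0$,
\[
\mu(B_r)\le \int_{\ern}\varphi\,d\mu=\int_{\ern}\int_{\ern}\Phi(u(x)-u(y))(\varphi(x)-\varphi(y))K(x,y)\,dx\,dy=:I_1+2I_2,
\]
where $I_1$ is the integral over $B_{3r/2}\times B_{3r/2}$ and, by symmetry of $K$ together with the fact that $\varphi\equiv 0$ outside $B_{5r/4}$, the cross-contribution $2I_2$ reduces to a symmetric double integral over $B_{5r/4}\times(\ern\setminus B_{3r/2})$. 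The key feature of this splitting is that $|x-y|\ge r/4$ throughout $I_2$.

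For the local piece the bounds $|\Phi(t)|\le\Lambda|t|^{p-1}$, $|\varphi(x)-\varphi(y)|\le c|x-y|/r$ and~\eqref{thekernel} give
\[
|I_1|\le \frac{c}{r}\int_{B_{3r/2}}\int_{B_{3r/2}}\frac{|u(x)-u(y)|^{p-1}}{|x-y|^{n+sp-1}}\,dx\,dy,
\]
which, divided by $r^{n-sp}$, matches the first term of~\eqref{eq:lower 1}. For $I_2$ I would start from the pointwise bound $\Phi(u(x)-u(y))\varphi(x)\le\Lambda(u(x)-u(y))_+^{p-1}\varphi(x)$ and split the $y$-integration into $B_{4r}\setminus B_{3r/2}$ and $\ern\setminus B_{4r}$. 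On the near region $u(y)\ge 0$, so $(u(x)-u(y))_+\le u(x)$, and $|x-y|\ge r/4$ yields $\int_{B_{4r}\setminus B_{3r/2}}K(x,y)\,dy\le c\,r^{-sp}$; on the far region the elementary inequality $(u(x)-u(y))_+^{p-1}\le c(u(x)^{p-1}+u_-(y)^{p-1})$ combined with $|x-y|\asymp|y-x_0|$ for $|y-x_0|\ge 4r$ produces both another $c\,r^{-sp}\int u^{p-1}$ contribution and the ${\rm Tail}(u_-;x_0,4r)^{p-1}$ term. Collecting these estimates,
\[
\frac{I_2}{r^{n-sp}}\le c\mean{B_{5r/4}(x_0)}u^{p-1}\,dx+c\,{\rm Tail}(u_-;x_0,4r)^{p-1}.
\]

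Finally, since $\mu\ge 0$ and $u\ge 0$ in $B_{4r}$, $u$ is a weak supersolution of $-\mathcal{L}_\Phi u=0$ in $B_{4r}$, so Theorem~\ref{thm:weak Harnack} applied on $B_{5r/4}\subset B_{2r}$ with $R=4r$ and exponent $\gamma=p-1$ (admissible because $sp>0$ forces $p-1<n(p-1)/(n-sp)$) gives
\[
\Bigl(\mean{B_{5r/4}}u^{p-1}\,dx\Bigr)^{1/(p-1)}\le c\inf_{B_{5r/2}}u+c\,{\rm Tail}(u_-;x_0,4r)\le c\inf_{B_r}u+c\,{\rm Tail}(u_-;x_0,4r),
\]
where the last step uses $\inf_{B_{5r/2}}u\le\inf_{B_r}u$. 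Combined with the bounds for $I_1$ and $I_2$, this proves~\eqref{eq:lower 1}. The delicate point is the choice of the support of $\varphi$: keeping $\mathrm{supp}\,\varphi$ strictly inside $B_{3r/2}$, rather than letting $\varphi$ merely vanish on $\partial B_{3r/2}$, is what enforces $|x-y|\ge r/4$ in $I_2$ and prevents $\int_{\ern\setminus B_{3r/2}}K(x,y)\,dy$ from blowing up as $x$ approaches the boundary of $\mathrm{supp}\,\varphi$; once this is arranged, matching radii in weak Harnack to land on $\inf_{B_r}u$ rather than $\inf_{B_{5r/2}}u$ is immediate by monotonicity of the infimum.
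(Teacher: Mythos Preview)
Your proof is correct and follows essentially the same approach as the paper: the same cutoff $\varphi\in C_0^\infty(B_{5r/4})$, the same local/nonlocal splitting at radius $3r/2$, the same use of the sign of $\Phi(u(x)-u(y))$ to control the cross term by $u(x)^{p-1}$ and the Tail of $u_-$, and the same application of the weak Harnack inequality with exponent $\gamma=p-1$. The only cosmetic differences are that the paper splits the nonlocal piece according to the sign of $u(y)$ rather than the region $B_{4r}$ (equivalent since $u\ge 0$ on $B_{4r}$), and it lands on $\mean_{B_{3r/2}}u^{p-1}$ rather than $\mean_{B_{5r/4}}u^{p-1}$ before invoking weak Harnack; both the symmetry of the kernel and the form $\Phi=\Phi_p$ that you implicitly use are exactly the reductions of \rif{specialp} that the paper assumes throughout Section~7.
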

\begin{proof}
Let $\varphi \in C_0^\infty(B_{5r/4})$ be such that $0\leq \varphi \leq 1$, $\varphi = 1$ in $B_{r}$ and $\|D\varphi\|_{L^\infty} \leq 16/r$. 
Taking such $\varphi$ in \rif{veryweak}, using \rif{thekernel} and recalling that \rif{specialp} holds, we get
\begin{eqnarray} \  
\nonumber
&&\frac{\mu(B_r)}{r^{n-sp}} \\& & \nonumber   \leq \frac{1}{r^{n-sp}} \int_{\ern} \int_{\ern}|u(x){-}u(y)|^{p-2}( u(x){-}u(y)) 
(\varphi(x) - \varphi(y)) 
K(x,y) \, dx \, dy
\\ \nonumber &  & =|B_1| r^{sp} \int_{B_{3r/2}} \mean{B_{3r/2}}
 |u(x){-}u(y)|^{p-2}( u(x){-}u(y)) (\varphi(x) - \varphi(y)) 
K(x,y) \, dx \, dy
\\ \nonumber & & \qquad+ |B_1| r^{sp} \int_{\ern \setminus B_{3r/2}} \mean{B_{3r/2}} |u(x){-}u(y)|^{p-2}(u(x){-}u(y))\varphi(x) 
K(x,y)  \, dx \, dy
\\ \nonumber & & \qquad+ |B_1| r^{sp} \mean{B_{3r/2}} \int_{\ern \setminus B_{3r/2}} |u(y){-}u(x)|^{p-2}(u(y){-}u(x))\varphi(y) 
K(y,x)  \, dx \, dy
\\ \nonumber &  & \leq c \|D\varphi\|_{L^\infty} r^{sp}   \int_{B_{3r/2}} \mean{B_{3r/2}} \frac{|u(x){-}u(y)|^{p-1}}{|x{-}y|^{n+sp-1}}   \, dx \, dy
\\  & & \quad+ 2|B_1| r^{sp} \int_{\ern \setminus B_{3r/2}} \mean{B_{3r/2}} |u(x){-}u(y)|^{p-2}(u(x){-}u(y))\varphi(x) 
K(x,y)  \, dx \, dy\,.\label{conto}
\end{eqnarray}
We turn to the estimate of the last integral; 
we shall also use the fact that $|y-x_0|\leq 16 |x-y|$ whenever 
$y \in \ern \setminus B_{3r/2}$ and $x \in {\rm supp} \,\varphi$ (since ${\rm supp}\, \varphi\subset B_{5r/4}$). 
Recalling that $u \geq 0$ in $B_{4r}$, we start splitting as follows:
\begin{eqnarray*} \  
\nonumber &&
 r^{sp} \int_{\ern \setminus B_{3r/2}} \mean{B_{3r/2}} 
|u(x){-}u(y)|^{p-2}(u(x){-}u(y))\varphi(x)
K(x,y)   \, dx \, dy
\\ \nonumber &&
\quad =  r^{sp} \int_{\ern \setminus B_{3r/2}\cap\{u(y)\geq 0\}} \mean{B_{3r/2}} (\ldots)\, dx \, dy\\ && \qquad +   r^{sp} \int_{\ern \setminus B_{4r}\cap\{u(y)< 0\}} \mean{B_{3r/2}}(\ldots)   
\, dx \, dy =: S_1 + S_2.
\end{eqnarray*}
Recalling that $0 \leq \varphi (x)\leq 1$ and \rif{thekernel}, we estimate $S_2$ as follows:
\begin{eqnarray*} \  
  S_2 &\leq & c r^{sp} \int_{\ern \setminus B_{4r}\cap\{u(y)< 0\}} \mean{B_{3r/2}} \frac{|u(x)-u(y)|^{p-1}}
  {|y{-}x_0|^{n+sp}}   
\, dx \, dy \\&\leq & c r^{sp} \int_{\ern \setminus B_{4r}} \mean{B_{3r/2}} \frac{[u(x)]^{p-1} +[u_{-}(y)]^{p-1}}
  {|y{-}x_0|^{n+sp}}   
\, dx \, dy \\
&\leq & c  \mean{B_{3r/2}} u^{p-1} \, dx + c\left[ \text{\rm Tail}(u_-; {x_0},4r)\right]^{p-1}\;.
\end{eqnarray*}
As for $S_1$, we observe that when evaluating the double integral we can restrict the domain of integration to $(\ern \setminus B_{3r/2})\times B_{3r/2}\cap \{0\leq u(y)\leq u(x)\}$ (since on the remain part of the integration domain the integrand 
is negative); on this we can simply estimate $||u(x){-}u(y)|^{p-2}(u(x){-}u(y))|\leq c[u(x)]^{p-1}$. We thereby obtain again 
$$
  S_1 \leq  cr^{sp} \int_{\ern \setminus B_{3r/2}} \mean{B_{3r/2}} \frac{[u(x)]^{p-1}}
  {|y{-}x_0|^{n+sp}}   
\, dx \, dy\leq c  \mean{B_{3r/2}} u^{p-1} \, dx\;.
$$
Connecting the estimates found for $S_1$ and $S_2$ to \rif{conto} we conclude with 
\begin{eqnarray} \  
\nonumber
\frac{\mu(B_r)}{r^{n-sp}} &\leq & \frac{c}{r^{1-sp}}    \int_{B_{3r/2}} \mean{B_{3r/2}} \frac{|u(x){-}u(y)|^{p-1}}{|x{-}y|^{n+sp-1}}   \, dx \, dy
\\ \nonumber & & \qquad+ c  \mean{B_{3r/2}} u^{p-1} \, dx + c [\text{\rm Tail}(u_-; {x_0},4r)]^{p-1}\,.
\end{eqnarray}
Since $\mu$ is non-negative, then $u$ is a non-negative weak supersolution to $-\mathcal{L}_{\Phi}u =0$ in $B_{4r}$; at 
this point a suitable application of Theorem~\ref{thm:weak Harnack} finishes the proof.
\end{proof}
We then estimate the integral appearing in \rif{eq:lower 1}. 
\begin{lemma} \label{lemma:lower 2}
Let $u$ be a weak solution to \trif{eqvw} with $\mu \in C^{\infty}_0(\er^n)$, such that $u \geq 0$ in $B_{4r}\equiv B_{4r}(x_0)\subset \Omega$ and $\mu \geq 0$.  
Let $h \in (0,s)$, $q \in (0,\bar q)$, where $\bar q$ has been defined in \trif{condizioni}. Then there exists a constant $c \equiv c(n,s,p,\Lambda,s-h,\bar q - q)$ such that 
\begin{equation} \label{eq:lower 2}
\left(\int_{B_{2r}} \mean{B_{2r}} \frac{|u(x){-}u(y)|^{q}}{|x{-}y|^{n+hq}} \, dx \, dy \right)^{1/q} \leq \frac{c}{ r^{h}}  \left[\inf_{B_{r}}  u + \text{\rm Tail}(u_-; x_0, 4r)\right]
\end{equation}
holds.
\end{lemma}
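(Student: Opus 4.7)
The plan is to combine the nonlinear Caccioppoli estimate of Lemma~\ref{lemma:cacc super} with the weak Harnack inequality (Theorem~\ref{thm:weak Harnack}), and then to transfer the resulting control from an auxiliary nonlinear power of $u$ to $u$ itself via a pointwise mean value identity and H\"older's inequality.

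Set $d := \inf_{B_r(x_0)} u + {\rm Tail}(u_-;x_0,4r)$ and assume $d>0$ (otherwise $u\equiv 0$ on $B_r$ and the conclusion is trivial). For an auxiliary exponent $\tilde q\in(1,p)$, to be fixed later, introduce
\[
w:=(u+d)^{1-\tilde q/p},
\]
so that $w^p=(u+d)^{p-\tilde q}$. Apply Lemma~\ref{lemma:cacc super} with big ball $B_{4r}$ and with a smooth cutoff $\varphi$ supported in an intermediate ball slightly larger than $B_{2r}$, satisfying $\varphi=1$ on $B_{2r}$ and $|D\varphi|\leq c/r$. Since $d\geq{\rm Tail}(u_-;x_0,4r)$, the tail contribution $c\,d^{1-p}(4r)^{-sp}{\rm Tail}(u_-;x_0,4r)^{p-1}$ is bounded by $c\,r^{-sp}$ and is absorbed, yielding
\[
[w]_{s,p;B_{2r}}^p \leq c\,r^{-sp}\int (u+d)^{p-\tilde q}\,dx,
\]
the integral being taken on the support of the cutoff. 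If $\tilde q$ is chosen close enough to $p$ so that $p-\tilde q$ lies strictly below the weak Harnack threshold $\gamma^*:=n(p-1)/(n-sp)$ (which is strictly positive thanks to the assumption $sp<n$), then Theorem~\ref{thm:weak Harnack}, combined with the monotonicity $\inf_{B_{4r}}u\leq\inf_{B_r}u$, bounds the right-hand side by $c\,r^n d^{p-\tilde q}$, and hence $[w]_{s,p;B_{2r}}^p\leq c\,r^{n-sp}d^{p-\tilde q}$.

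For the passage from $w$ back to $u$, the mean value theorem applied to the monotone map $t\mapsto(t+d)^{1-\tilde q/p}$ gives, for $u(x),u(y)\geq 0$,
\[
|u(x)-u(y)|\leq c\,(u(x)+u(y)+d)^{\tilde q/p}\,|w(x)-w(y)|,
\]
after which H\"older's inequality with conjugate exponents $p/q$ and $p/(p-q)$ applied to the Gagliardo integrand produces
\[
[u]_{h,q;B_{2r}}^{q} \leq c\,r^{(s-h)q}\,[w]_{s,p;B_{2r}}^{q}\left(\int_{B_{2r}}(u+d)^{\tilde q q/(p-q)}\,dx\right)^{(p-q)/p}.
\]
A short computation shows that the condition $q<\bar q=\min\{n(p-1)/(n-s),p\}$ is precisely what permits $\tilde q\in(1,p)$ to be chosen so that simultaneously $p-\tilde q<\gamma^*$ \emph{and} $\tilde q\,q/(p-q)<\gamma^*$; a second application of weak Harnack then bounds the last integral by $c\,r^n d^{\tilde q q/(p-q)}$. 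Substituting all the bounds, the powers of $r$ telescope to $r^{n-hq}$ and the powers of $d$ add up to $d^q$, giving $[u]_{h,q;B_{2r}}^q\leq c\,r^{n-hq}d^q$. Dividing by $|B_{2r}|\sim r^n$ and taking $q$-th roots then yields~\eqref{eq:lower 2}.

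The principal technical delicacy lies in the joint selection of the cutoff geometry and the auxiliary exponent $\tilde q\in(1,p)$: one must arrange simultaneously that (i) the Caccioppoli cutoff leaves enough room inside $B_{4r}$ to exploit the non-negativity of $u$, (ii) the tail correction appearing in Lemma~\ref{lemma:cacc super} is absorbed by $d$, and (iii) the two exponents $p-\tilde q$ and $\tilde q q/(p-q)$ both lie strictly below $\gamma^*$ so that Theorem~\ref{thm:weak Harnack} closes the argument. The admissible window for $\tilde q$ shrinks to a point as $q\to\bar q$ or $h\to s$, which is the source of the stated blow-up of the constant with $\delta=\min\{\bar q-q,\,s-h\}$.
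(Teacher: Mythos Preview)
Your proof is correct and follows essentially the same strategy as the paper's: apply the supersolution Caccioppoli estimate (Lemma~\ref{lemma:cacc super}) to the auxiliary function $w=(u+d)^{1-\tilde q/p}$, convert back to $u$ via the elementary pointwise inequality $|u(x)-u(y)|\leq c(\bar u(x)+\bar u(y))^{\tilde q/p}|w(x)-w(y)|$, split by H\"older with exponents $p/q$ and $p/(p-q)$, and close with two applications of the weak Harnack inequality (your $\tilde q$ is the paper's $m$). One small point of bookkeeping: with the hypothesis $u\geq 0$ only in $B_{4r}$, Theorem~\ref{thm:weak Harnack} controls averages on balls contained in $B_{2r}$, so a cutoff with support strictly larger than $B_{2r}$ puts the right-hand integral just outside the admissible range; the paper sidesteps this by taking $\varphi\in C_0^\infty(B_{7r/4})$ with $\varphi\equiv 1$ on $B_{3r/2}$ (and the stated $B_{2r}$ in \eqref{eq:lower 2} is a harmless enlargement), and you can do the same.
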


\begin{proof}
Let 
\eqn{defid2}
$$
d \equiv d_\delta := \inf_{B_{r}}  u + \text{\rm Tail}(u_-; x_0, 4r) + \delta\,,\qquad \mbox{for}\ \delta >0\;,$$
and set $$\bar u = u +d\,,\qquad w:= \bar u^{1-m/p}\,, \qquad\mbox{where}\ m \in (1,p)\;.$$
Applying Lemma~\ref{lemma:cacc super} (with $2r$ instead of $r$ and $R \equiv 4r$) with a cut-off function $\varphi \in C_0^{\infty} (B_{7r/4})$ such that $\varphi = 1$ in $B_{3r/2}$, $0\leq \varphi \leq 1$ and $|D\varphi| \leq 16/r$, we obtain
$$
\int_{B_{3r/2}} \int_{B_{3r/2}}  \frac{|w(x){-}w(y)|^p}{|x{-}y|^{n+sp}}\, dx\, dy  \leq \frac{c}{r^{sp}}  \int_{B_{2r}} w^p \, dx \;.
$$
To evaluate the integral in the left-hand side, we start assuming, without loss of generality, $u(x) > u(y)$; we have
\begin{eqnarray} \  
\nonumber
|w(x){-}w(y)| & = & |[\bar u(x)]^{1-m/p}{-}[\bar u(y)]^{1-m/p} |
\\ \nonumber & = &[ \bar u(x)]^{1-m/p} \left[1- \left(\frac{\bar u(y)}{\bar u(x)} \right)^{1-m/p} \right] \geq  \frac{p-m}{p} \frac{\bar u(x) - \bar u(y)}{[\bar u(x)]^{m/p}}\;.
\end{eqnarray}
Notice that in the last lines we have used the elementary inequality
$
(1-t^{\beta})\geq  \beta (1-t), 
$
that holds in the case $t, \beta \in (0,1]$ and that follows by mean value 
theorem. Considering in a similar way also the case $u(y) > u(x)$ and recalling the definition of $\bar u$, we conclude that
$$
\frac{|u(x){-}u(y)|}{[\bar u(x)]^{m/p}{+}[\bar u(y)]^{m/p}} \leq
 \frac{cp}{p-m}|w(x){-}w(y)|\,.
$$
On the other hand, recalling again the definitions of $w$ and $d$, Theorem~\ref{thm:weak Harnack} then implies
\[
\mean{B_{2r}} w^p \, dx  \leq c d^{p-m}
\]
since $m \in (1,p)$, where $c\equiv (n,s,p, \Lambda)$.  
Putting the last three displays above together yields, and using that $u\geq 0$ in $B_{4r}$, we have
\begin{equation} \  \nonumber
\int_{B_{3r/2}} \mean{B_{3r/2}} \frac{|u(x){-}u(y)|^p }{[\bar u(y) {+}\bar u(x)]^{m}} \, \frac{dx\, dy}{|x{-}y|^{n+sp}}
\leq \frac{cd^{p-m}}{r^{sp }}\,.
\end{equation}
Next, we get by H\"older's inequality that 
\begin{eqnarray} 
\nonumber && \int_{B_{3r/2}} \mean{B_{3r/2}} \frac{|u(x){-}u(y)|^{q}}{|x{-}y|^{n+h q}} \, d x \, dy  
\\ \nonumber && \qquad \leq \; \left(\int_{B_{3r/2}} \mean{B_{3r/2}}  \frac{|u(x){-}u(y)|^p }{[\bar u(y){+} \bar u(x)]^{m}} \, \frac{dx\, dy}{|x{-}y|^{n+sp}}   \right)^{q/p}  
\\ \nonumber && \qquad \qquad \; \cdot  
\left(\int_{B_{3r/2}} \mean{B_{3r/2}}  \frac{[\bar u(y) {+} \bar u(x)]^{mq/(p-q)}\, dx\, dy}{|x{-}y|^{n+(h-s)qp/(p-q)}}   \right)^{(p-q)/p} \,.
\end{eqnarray}
Using the definition of $d$ in \rif{defid2}, Theorem~\ref{thm:weak Harnack} then gives 
\begin{eqnarray*} 
\nonumber && \left(\int_{B_{3r/2}} \mean{B_{3r/2}}  \frac{[\bar u(y) {+} \bar u(x)]^{mq/(p-q)}\, dx\, dy}{|x{-}y|^{n+(h-s)qp/(p-q)}}   \right)^{(p-q)/p}
\\ &&\qquad  \leq \frac{c}{r^{(h-s)q}}\left( \mean{B_{3r/2}}  \bar u^{mq/(p-q)}\, dx\right)^{(p-q)/p}\leq c \left( r^{(s-h)p} d^{m} \right)^{q/p}
\end{eqnarray*}
with the last inequalities that hold provided
\[
\frac{mq}{p-q} < \frac{n(p-1)}{n-sp} \qquad \mbox{and} \qquad h < s\,.
\]
The first inequality in the above display is required in order to apply Theorem~\ref{thm:weak Harnack}. Notice that we can find $m >1$ satisfying the previous condition observing that
$$
\frac{q}{p-q} \; < \; \frac{n(p-1)}{n-sp}  \quad \Longleftrightarrow \quad q  \; < \; \frac{n(p-1)}{n-s}\;.
$$
We notice that the constant $c$ depends where the constant depends on $n,s,p,\Lambda,s-h,\bar q -q$. Hence we arrive at
$$
\mean{B_{3r/2}} \frac{|u(x){-}u(y)|^{q}}{|x{-}y|^{n+h q}} \, d x \, dy\leq   \frac{cd^{q}}{r^{hq}}\;.
$$
This finishes the proof letting $\delta\to 0$ and recalling the definition of $d$ in \rif{defid2}.
\end{proof}
\begin{lemma} \label{ultimolemma}
Let $u$ be a weak solution to \trif{eqvw} with $\mu \in C^{\infty}_0(\er^n)$, such that $u \geq 0$ in $B_{4r}\equiv B_{4r}(x_0)\subset \Omega$ and $\mu \geq 0$. Then there exists a constant $c \equiv c(n,s,p,\Lambda)$ such that the following inequality holds:
$$
\frac{\mu(B_r)}{r^{n-sp}} \leq c \left[ \inf_{B_{r}} u+ \text{\rm Tail}(u _-; {x_0},4r) \right]^{p-1}\;.
$$
\end{lemma}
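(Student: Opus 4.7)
The plan is to reduce the statement to the two preceding lemmas. Lemma \ref{lemma:lower 1} already produces the right-hand side $c[\inf_{B_r} u + \tail(u_-; x_0, 4r)]^{p-1}$ plus an error term of the form $c\,r^{sp-1} I$, where
$$I := \int_{B_{3r/2}}\mean{B_{3r/2}} \frac{|u(x){-}u(y)|^{p-1}}{|x{-}y|^{n+sp-1}}\, dx\, dy\,.$$
It therefore suffices to establish the bound $I \leq c\, r^{1-sp} [\inf_{B_r} u + \tail(u_-; x_0, 4r)]^{p-1}$, and Lemma \ref{lemma:lower 2} will provide exactly this kind of control once the exponents are matched.

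The natural pairing is $q := p-1$ (which is strictly less than $\bar q = \min\{n(p-1)/(n-s), p\}$, with gap depending only on $n,s,p$) together with $h$ chosen so that $h(p-1) = sp-1$, i.e.\ $h = (sp-1)/(p-1)$. Since $s<1$ we always have $(sp-1)/(p-1) < s$, so this candidate lies in $(0,s)$ whenever $sp \geq 1$, in which case Lemma \ref{lemma:lower 2} (raised to the $(p-1)$-th power and enlarged from $B_{3r/2}$ to $B_{2r}$, at the cost of the harmless volume ratio $(4/3)^n$) closes the argument immediately. To unify both cases, I pick any $h$ in the non-empty interval $(\max\{0,(sp-1)/(p-1)\}, s)$, so that both $s-h$ and the exponent $\alpha := h(p-1) - (sp-1) \geq 0$ depend only on $n,s,p$. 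Using $|x{-}y| \leq 3r$ on $B_{3r/2}\times B_{3r/2}$, I bound
$$\frac{1}{|x{-}y|^{n+sp-1}} \leq \frac{(3r)^{\alpha}}{|x{-}y|^{n+h(p-1)}}\,,$$
so that invoking Lemma \ref{lemma:lower 2} with the chosen $q$ and $h$ on the enlarged ball $B_{2r}$ yields
$$I \leq c\, r^{\alpha}\cdot r^{-h(p-1)} \left[\inf_{B_r} u + \tail(u_-; x_0, 4r)\right]^{p-1} = c\, r^{1-sp} \left[\inf_{B_r} u + \tail(u_-; x_0, 4r)\right]^{p-1}\,.$$

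Substituting this estimate into Lemma \ref{lemma:lower 1} gives the claim with a constant depending only on $n,s,p,\Lambda$. There is essentially no obstacle beyond the elementary bookkeeping of checking that the chosen $q$ and $h$ remain in the admissible ranges of Lemma \ref{lemma:lower 2} with the gaps $\bar q - q$ and $s-h$ quantitatively controlled; the only minor subtlety is the interpolation via $\alpha$, which is needed precisely to cover the regime $sp \leq 1$ where the naive choice $h=(sp-1)/(p-1)$ would fall outside $(0,s)$.
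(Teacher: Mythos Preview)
Your proof is correct and follows essentially the same route as the paper: combine Lemma~\ref{lemma:lower 1} with Lemma~\ref{lemma:lower 2} applied with $q=p-1$ and any $h\in(\max\{0,(sp-1)/(p-1)\},s)$, using $|x-y|^{h(p-1)-(sp-1)}\leq c\,r^{h(p-1)-(sp-1)}$ to convert the kernel exponent. Your explicit handling of the regime $sp\leq 1$ via the $\max\{0,\cdot\}$ is a mild clarification over the paper's statement, but otherwise the arguments coincide.
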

\begin{proof} This follows using Lemmas~\ref{lemma:lower 1} and~\ref{lemma:lower 2} with with $q=p-1$ and $h$ such that 
$s> h>  (sp-1)/(p-1)$ (that implies $(sp-1)-h(p-1)\leq 0$), and estimating, by mean of \rif{eq:lower 2}, as
\begin{eqnarray*}
&&\frac{1}{\varrho^{1-sp}} \int_{B_{3\varrho/2}} \mean{B_{3\varrho/2}} \frac{|u(x){-}u(y)|^{p-1}}{|x{-}y|^{n+sp-1}}   \, dx \, dy\\
&&\quad = \frac{c}{\varrho^{1-sp}} \int_{B_{2\varrho}} \mean{B_{2\varrho}} \frac{|u(x){-}u(y)|^{p-1}}{|x{-}y|^{n+h(p-1)+(sp-1)-h(p-1)}}   \, dx \, dy\\
&&\quad \leq c\varrho^{h(p-1)} \int_{B_{2\varrho}} \mean{B_{2\varrho}} \frac{|u(x){-}u(y)|^{p-1}}{|x{-}y|^{n+h(p-1)}}   \, dx \, dy\leq c \left[ \inf_{B_{r}} u+ \text{\rm Tail}(u _-; {x_0},4r) \right]^{p-1}.
\end{eqnarray*}
\end{proof}

\begin{proof}[Proof of Theorem~\ref{thm:lower}]
In the following all the balls will be centred at $x_0$. 
Let $\{u_j\}$ be an approximating sequence for the SOLA $u$ as described in Definition   \ref{soladef}, with the source terms $\mu_j$ being nonnegative, as prescribed in the assumption of Theorem~\ref{thm:lower}. 
Since $-u_j$ is in particular a weak subsolution to \rif{approssimazione}, Lemma~\ref{lemma:sub bnd} then implies 
$$
\sup_{B_{r/2}} (u_j)_- \leq c \left[  \mean{B_{r}} (u_j)_- \, dx +{\rm Tail}((u_j)_-;x_0,r/2) \right]\;.
$$
By the convergence properties of $\{u_j\}$ and the nonnegativity of $u$ in $B_r$, and recalling that $(-u_j)_+=(u_j)_-$, we can pass to the limit under the sign of integral thereby getting
\begin{equation} \label{eq:inf conv}
\limsup_{j \to \infty } \sup_{B_{r/2}} (u_j)_- \leq c {\rm Tail}(u_-;x_0,r/2)\,,
\end{equation}
for a constant $c$ depending only on $n,s,p,\Lambda$. 
Now, the function 
\eqn{defitilde}
$$\tilde u_j := u_j - \inf_{B_{r/2}} u_j$$ 
is nonnegative in $B_{r/2}$. 
Denote 
\[
 m_{\varrho,j}  := \inf_{B_\varrho} \tilde u_j \qquad \mbox{and} \qquad   T_{\varrho,j} := \text{\rm Tail}((\tilde u_j-m_{\varrho, j})_-; x_0,\varrho )
\]
for $\varrho \in (0,r/2]$. Lemma~\ref{ultimolemma} now yields
\begin{equation} \label{eq:lower iter 1} 
\left[\frac{\mu_j(B_\varrho)}{\varrho^{n-sp}} \right]^{1/(p-1)} \leq  c \left( m_{\varrho, j} - m_{4\varrho, j}  +T_{4\varrho, j} \right)
\end{equation}
for any $\varrho \in (0,r/8]$ for a constant depending only on $n,s,p,\Lambda$. Now, with $M \geq 1$ to be chosen in a few lines, we have that
\begin{eqnarray} \  
\nonumber 
T_{\varrho, j}& = &  c \varrho^{sp/(p-1)} \left[ \int_{\ern \setminus B_\varrho} \frac{(\tilde u_j(x)-m_{\varrho,j})_-^{p-1}} {|x{-}x_0|^{n+sp}} \, dx \right]^{1/(p-1)}
\\  \nonumber  & \leq & c \varrho^{sp/(p-1)} \left[ \int_{\ern \setminus B_{M \varrho}} \frac{(\tilde u_j(x)-m_{\varrho,j})_-^{p-1}}{ |x{-}x_0|^{n+sp}} \, dx \right]^{1/(p-1)}
\\  \nonumber  & & + c \varrho^{sp/(p-1)} \left[ \int_{B_{M \varrho} \setminus B_\varrho} \frac{(\tilde u_j(x)-m_{\varrho,j})_-^{p-1}}{ |x{-}x_0|^{n+sp}} \, dx \right]^{1/(p-1)}
\\  \nonumber  & \leq &  c M^{-sp/(p-1)} \left( m_{\varrho,j}-m_{M\varrho,j} + T_{M\varrho, j} \right)  +  c \left( m_{\varrho,j}-m_{M\varrho,j} \right)
\\  \label{eq:lower tail 2} & \leq & c M^{-sp/(p-1)}  T_{M\varrho, j}   +  c\left( m_{\varrho,j}-m_{M\varrho,j} \right)
\end{eqnarray}
holds for $\varrho \in (0,r/2)$. Notice that we have used the elementary inequality $$(\tilde u_j(x)-m_{\varrho,j})_- \leq (\tilde u_j(x)-m_{M\varrho,j})_- + m_{\varrho, j}-m_{M\varrho,j}\;.$$ Now, with $ t \in (0,r/(8M))$, by integrating \rif{eq:lower tail 2} and changing variables we obtain
\begin{eqnarray} \  
\nonumber
&&  
\int_{t}^{r/(8M)} T_{4\varrho, j} \, \frac{d\varrho}{\varrho} =\int_{4 t}^{r/(2M)} T_{\varrho, j} \, \frac{d\varrho}{\varrho} 
\\ \nonumber && \qquad \leq  c M^{-sp/(p-1)} \int_{t}^{r/(2M)} T_{M\varrho, j}\, \frac{d\varrho}{\varrho} + c \int_{t}^{r/(2M)} (m_{\varrho, j}-m_{M\varrho,j}) \, \frac{d\varrho}{\varrho} 
\\  \nonumber& & \qquad  =  c M^{-sp/(p-1)} \int_{tM}^{r/2} T_{\varrho, j}\, \frac{d\varrho}{\varrho} + 
c\left( \int_{t}^{Mt} m_{\varrho, j}\, \frac{d\varrho}{\varrho} -  \int_{r/(2M)}^{r/2} m_{\varrho, j}\, \frac{d\varrho}{\varrho} \right)\,. 
\end{eqnarray}
Choosing $M\equiv M(s,p)$ so large that
\[
c M^{-sp/(p-1)} = \frac12 \,,
 \]
 we obtain, after easy manipulations
 \[
 \int_{t}^{r/(8M)} T_{4\varrho, j} \, \frac{d\varrho}{\varrho} \leq \int_{r/(2M)}^{r/2} T_{\varrho, j}\, \frac{d\varrho}{\varrho} +  c \left( \int_{t}^{Mt} m_{\varrho, j}\, \frac{d\varrho}{\varrho} -  \int_{r/(2M)}^{r/2} m_{\varrho, j}\, \frac{d\varrho}{\varrho} \right)\,.
 \]
Using~\eqref{eq:lower tail 2} again (this time with $M\varrho = r/2$) we obtain
$$
\int_{r/(2M)}^{r/2} T_{\varrho, j}\, \frac{d\varrho}{\varrho}  \leq c T_{r/2,j} + c \int_{r/(2M)}^{r/2} (m_{\varrho, j} - m_{r/2,j})\, \frac{d\varrho}{\varrho}\,,
$$
and hence also, by changing variables, that 
\begin{eqnarray*} 
 \nonumber \int_{t}^{r/8} T_{4\varrho, j} \, \frac{d\varrho}{\varrho}&=&  \int_{t}^{r/(8M)} T_{4\varrho, j} \, \frac{d\varrho}{\varrho}+\int_{r/(2M)}^{r/2} T_{\varrho, j} \, \frac{d\varrho}{\varrho}\\ \nonumber & \leq &c (m_{t,j}-m_{r/2, j}) + c (m_{r/(2M),j}-m_{r/2,j})+ cT_{r/2,j}\\&\leq & c m_{t,j} + cT_{r/2,j}
\end{eqnarray*}
holds whenever $t < r/(8M)$, for a constant $c \equiv c(n,s,p,\Lambda)$; we have used that the function $\varrho \to m_{\varrho,j}$ is clearly non-increasing and that $m_{r/2,j}=0$. Again integrating~\eqref{eq:lower iter 1}, and using the last inequality, we conclude with
\begin{eqnarray} \  
\nonumber
 \int_{t}^{r/8} \left[\frac{\mu_j(B_\varrho)}{\varrho^{n-sp}} \right]^{1/(p-1)} \, \frac{d\varrho}{\varrho}
 & \leq &
 c  \int_{t}^{r/8} (m_{\varrho, j}-m_{4\varrho,j}) \, \frac{d\varrho}{\varrho} +c   \int_{t}^{r/8}T_{4\varrho, j} \, \frac{d\varrho}{\varrho}
 \\  \nonumber  & \leq & c  \int_{t}^{4t} m_{\varrho,j} \, \frac{d\varrho}{\varrho}+c   \int_{t}^{r/8}T_{4\varrho, j} \, \frac{d\varrho}{\varrho}
  \\  \nonumber  & \leq & c m_{t,j} + cT_{r/2,j}\,, 
\end{eqnarray}
for a constant $c$ depending only on $n,s,p,\Lambda$. 
We now want to let $j \to \infty$ in the above estimate, using the properties of SOLA described in Definition \ref{soladef}. As for the first term $T_{r/8,j}$, by the definition in \rif{defitilde} and recalling that in particular we have that $t \leq r/2$ so that $\inf_{B_{r/2}} u_j \leq (u_j)_t$, we have 
\begin{eqnarray*}
\limsup_{j \to \infty}\, T_{r/2,j}&=& \limsup_{j \to \infty}\,\text{\rm Tail}((\tilde u_j-m_{r/2,j})_-; x_0, r/2 ) \\
& = & \limsup_{j \to \infty}\,\text{\rm Tail}\left(\left(u_j - \inf_{B_{r/2}} u_j\right)_-; x_0, r/2 \right)\\ &
\leq  &c\limsup_{j \to \infty}\, (u_j)_t +  c\limsup_{j \to \infty}\,\text{\rm Tail}((u_j)_-; x_0,r/2)\\&
\leq &   c(u)_t + c{\rm Tail}(u_-; x_0,r/2)\;.
\end{eqnarray*}
Using also~\eqref{eq:inf conv}, we obtain
\begin{eqnarray*}
 \limsup_{j \to \infty}\, m_{t,j} &=& \limsup_{j \to \infty}\, \inf_{B_t}\left( u_j - \inf_{B_{r/2}}  u_j\right)\\& \leq &
 \limsup_{j \to \infty}\,\,(u_j)_{B_t} +  \limsup_{j \to \infty}\, \sup_{B_{r/2}}  (u_j)_-\\ &\leq &
 c \limsup_{j \to \infty}\,\, (u_j)_{B_t} + c\text{\rm Tail}(u_-; x_0,r/2) \\&\leq& c (u)_{B_t} + c\text{\rm Tail}(u_-; x_0,r/2) \;.
\end{eqnarray*}
Moreover,  by the weak convergence of $\mu_j$ to $\mu$ and Lebesgue dominated convergence theorem, we obtain that 
\[
\int_{t}^{r/8} \left[\frac{\mu_j(B_\varrho)}{\varrho^{n-sp}} \right]^{1/(p-1)} \, \frac{d\varrho}{\varrho} \to \int_{t}^{r/8} \left[\frac{\mu(B_\varrho)}{\varrho^{n-sp}} \right]^{1/(p-1)} \, \frac{d\varrho}{\varrho}
\]
as $j \to \infty$ (keep \rif{keep in mind} in mind). Thus we arrive at
\[
 \int_{t}^{r/8} \left[\frac{\mu(B_\varrho)}{\varrho^{n-sp}} \right]^{1/(p-1)} \, \frac{d\varrho}{\varrho} \leq c (u)_{B_t} + 
c \text{\rm Tail}(u_-; x_0,r/2)
\]
for any $t \in (0,r/(8M))$ and where $c \equiv c(n,s,p,\Lambda)$. Now, if ${\bf W}_{s,p}^\mu(x_0,r/8)$ is finite, then by Theorem \ref{pot0} there exists a the precise representative of $u$ at $x_0$, as described in \rif{precise}. This finishes the proof of \rif{stimawolff2} after letting $t \to 0$ in the above display, 
when ${\bf W}_{s,p}^\mu(x_0,r/8)< \infty$. In a similar way, if ${\bf W}_{s,p}^\mu(x_0,r/8)= \infty$, then it \rif{essinf} follows and in any case $x_0$ is a Lebesgue point for $u$.\end{proof}


\begin{thebibliography}{99}

\bibitem {AB} \name[Alberti, G.]\et \name[Bellettini, G.]: A nonlocal anisotropic model for phase transitions. I. The optimal profile problem. {\em Math. Ann.} 310 (1998), 527--560.

\bibitem {ABB} \name[Alibaud, N.]\et \name[Andreianov, B.]\et \name[Bendahmane, M.]: Renormalized solutions of the fractional Laplace equation. {\em C. R. Math. Acad. Sci. Paris} 348 (2010), 759--762.

\bibitem {BCI} \name[Barles, G.]\et \name[Chasseigne, E.]\et \name[Imbert, C.]: H\"older continuity of solutions of second-order non-linear elliptic integro-differential equations. {\em J. Eur. Math. Soc. (JEMS)} 13 (2011), 1--26. 


    
\bibitem {elenco} \name[B\'enilan, P.]\et \name[Boccardo, L.]\et \name[Gallou\"et, T.]\et
\name[Gariepy, R. ]\et \name[Pierre, M.]\et \name[V\'azquez, J.~L.]:
An $L\sp 1$-theory of existence and uniqueness of solutions of
nonlinear elliptic equations. {\em Ann.~Scuola Norm.~Sup. Pisa
Cl.~Sci.~(IV)} 22 (1995), 241--273.

\bibitem {BCF} \name[Bjorland, C.]\et \name[Caffarelli, L.]\et \name[Figalli, A.]: Non-local tug-of-war and the infinity fractional Laplacian. {\em Adv. Math.} 230 (2012), 1859--1894.

\bibitem {BG1}
\name[Boccardo, L.]\et \name[Gallou\"et, T.]:
 Nonlinear elliptic
and parabolic equations involving measure data. {\em J.~Funct.~Anal.}
87 (1989), 149--169.

\bibitem {BG2}
\name[Boccardo, L.]\et \name[Gallou\"et, T.]: Nonlinear elliptic
equations with right-hand side measures. {\em Comm.~Partial
Differential Equations} 17 (1992), 641--655.

\bibitem {BG3}
\name[Boccardo, L.]\et \name[Gallou\"et, T.]: $W^{1,1}$-solutions in some borderline cases of Calder\'on-Zygmund theory. {\em J. Differential Equations} 253 (2012), 2698--2714.

\bibitem {BGO} \name[Boccardo, L.]\et \name[Gallou\"et, T.]\et \name[Orsina, L.]:
Existence and uniqueness of entropy solutions for
 nonlinear elliptic equations with measure data.
 {\em Ann.~Inst.~H.~Poincar\`e Anal.~Non Lin\'eaire} 13 (1996), 539--551.
 
\bibitem {BV} \name[Bonforte, M.]\et \name[V\"azquez, J.L.]: Quantitative local and global a priori estimates for fractional nonlinear diffusion equations. {\em arXiv:1210.2594}.
 
 \bibitem {CSm} \name[Cabr\'e, X.]\et \name[Sola-Morales, X.]: Layer solutions in a half-space for boundary reactions. {\em Comm. Pure Appl. Math.} 58 (2005), 1678--1732.
 
 \bibitem {CG}
\name[Cacace, S.]\et \name[Garroni, A.]: A multi-phase transition model for the dislocations with interfacial microstructure. {\em Interfaces Free Bound.} 11 (2009), 291--316.


\bibitem {CS}
\name[Caffarelli, L.]\et \name[Silvestre, L.]: An extension problem related to the fractional Laplacian. 
{\em Comm.~PDE} 32 (2007). 1245--1260. 

\bibitem {CS2}
\name[Caffarelli, L.]\et \name[Silvestre, L.]: Regularity results for nonlocal equations by approximation. {\em Arch.~Ration.~Mech.~Anal.~}200 (2011), no. 1, 59--88. 

\bibitem {CVa}
\name[Caffarelli, L.]\et \name[Vasseur, A.]: Drift diffusion equations with fractional diffusion and the quasi- geostrophic equation.  {\em Ann. Math. (II)} 171 (2010), 1903--1930.

\bibitem{camp} \name[Campanato, S.]: Equazioni ellittiche del ${\rm II}$ ordine
e spazi $\mathfrak{L}\sp{(2,\lambda )}$.  {\em Ann. Mat. Pura Appl. (IV)} 69
(1965), 321--381.

\bibitem {CV} \name[Chen, H.]\et \name [Veron, L.]:
Semilinear fractional elliptic equations involving measures. {\em J. Differential Equations} 25 (1973), 565--590.

\bibitem {C} \name[Cianchi, A.]: Non-linear potentials, local solutions to elliptic equations and rearrangements. {\em Ann. Sc. Norm. Super. Pisa Cl. Sci. (V)} 10 (2011), 335--361.




\bibitem {DMOP} \name[Dal Maso, G.]\et \name[Murat, F.]\et \name[Orsina, L.]\et \name[Prignet, A.]:
Renormalized solutions of elliptic equations with general measure
data. {\em Ann.~Scuola Norm.~Sup.~Pisa Cl.~Sci.~(IV)} 28 (1999),
741--808.

\bibitem{DKP1} \name[Di Castro, A.]\et\name[Kuusi, T.]\et\name[Palatucci, G.]: Local behavior of fractional $p$-minimizers. {\it Submitted paper}.

\bibitem{DKP2} \name[Di Castro, A.]\et\name[Kuusi, T.]\et\name[Palatucci, G.]: Nonlocal Harnack inequalities. {\em J. Funct. Anal.}, to appear. 


\bibitem {DPV} \name[Di Nezza, E.]\et\name[Palatucci, G.]\et\name[Valdinoci, E.]: Hitchhiker's guide to the fractional Sobolev spaces. {\em Bull. Sci. Math.} 136 (2012), 521--573.

 \bibitem {GO} \name[Gilboa, G.]\et\name[Osher, S.]: Nonlocal operators with applications to image processing. {\em Multiscale Model. Simul.} 7 (2008), 1005--1028. 

 \bibitem {giusti} \name[Giusti, E.]: {\em Direct methods in the calculus of
variations.} World Scientific Publishing Co., Inc., River Edge, NJ,
2003.





\bibitem {MH} \name[Havin, M.]\et\name[Maz'ja, V. G.]:
Non-linear potential theory. {\em Russ.~Math.~Surveys} 27 (1972), 71--148.

\bibitem {HW} \name[Hedberg, L.]\et\name[Wolff, Th. H.]: 
Thin sets in nonlinear potential theory. 
{\em Ann. Inst. Fourier (Grenoble)} 33 (1983), 161--187. 

\bibitem {HKM} \name[Heinonen, J.]\et\name[Kilpel\"ainen, T.]\et\name[Martio, O.]: {\em  Nonlinear
potential theory of degenerate elliptic equations.} Oxford
Mathematical Monographs., New York, 1993.


\bibitem {JV} \name[Jaye, B.]\et\name[Verbitsky, I.]: Local and global behaviour of solutions to nonlinear equations with natural growth terms. {\em Arch.~Rat.~Mech.~Anal.~}204 (2012), 627--681. 

 \bibitem {KPU}  \name[ Karlsen, K. H.]\et\name[Petitta, F.]\et\name[Ulusoy, S.]: A duality approach to the fractional Laplacian with measure data. {\em Publ. Mat.} 55 (2011), 151--161.
 
 \bibitem {kilp} \name[Kilpel\"ainen, T.]: H\"older continuity of solutions to quasilinear elliptic equations
involving measures.  {\em Potential Anal.}~3 (1994), 265--272.

 \bibitem {KKT}  \name[Kilpel\"ainen,, T.]\et\name[Kuusi,, T.]\et\name[Tuhola-Kujanp\"a\"a,, A.]: Superharmonic functions are locally renormalized solutions. {\em
Ann.~Inst.~H.~Poincar\'e, Anal.~Non Lin\`eaire} 
28 (2011), 775--795.

\bibitem {KM1} \name[Kilpel\"ainen, T.]\et\name[Mal\'y, J.]: Degenerate elliptic equations with measure data and nonlinear potentials. {\em Ann.~Scuola Norm.~Sup.~Pisa Cl.~Sci.~(IV)} 19 (1992), 591--613.

\bibitem {KM2} \name[Kilpel\"ainen, T.]\et\name[Mal\'y, J.]: The Wiener test and potential
estimates for quasilinear elliptic equations. {\em Acta Math.}~172
(1994), 137--161.

\bibitem {KR} \name[Klimsiak, T.]\et\name[Rozkosz, A.]: Dirichlet forms and semilinear elliptic equations with
measure data. {\em J.~Funct.~Anal.} 265 (2013), 890--925. 

\bibitem {KK} \name[Korte, R.]\et\name[Kuusi, T.]: A note on the Wolff potential estimate for solutions to elliptic equations involving measures. {\em
Adv.~Calc.~Var.} 3 (2010), 99--113.

\bibitem {KMl} \name[Kuusi, T.]\et\name[Mingione, G.]: Linear potentials in nonlinear potential theory. {\em
Arch. Rat. Mech. Anal.} 207 (2013), 215--246.

\bibitem {KMb} \name[Kuusi, T.]\et\name[Mingione, G.]: Guide to nonlinear potential estimates. {\em Bull. Math. Sci.} 4 (2014), 1--82.  


\bibitem {liebe} \name[Lieberman, G.~M.]: Sharp forms of estimates for subsolutions and
supersolutions of quasilinear elliptic equations involving measures.
{\em Comm. Partial Differential Equations} 18 (1993), 1191--1212.

\bibitem {Lind} \name[Lindqvist, P.]: On the definition and properties of
$p$-superharmonic functions. {\em J.~reine angew.~Math.~(Crelles
J.)} 365 (1986), 67--79.


\bibitem {M} \name[Maz'ya,, V.]: The continuity at a boundary point of the solutions of quasi-linear elliptic equations. (Russian) {\em Vestnik Leningrad. Univ.} 25 (1970), 42--55. 


\bibitem {milan} \name[Mingione, G.]: Nonlinear measure data problems. {\em Milan J. Math.} 79 (2011), 429--496. 
%

\bibitem {PV1} \name[Phuc, N. C.]\et\name[Verbitsky, I. E.]: Quasilinear and Hessian equations of Lane-Emden type. {\em Ann.~of Math.~(II)} 168 (2008), 859--914.

\bibitem {PV2} \name[Phuc, N. C.]\et\name[Verbitsky, I. E.]: Singular quasilinear and Hessian equations and inequalities. {\em J.~Funct.~Anal.~} 256 (2009), 1875--1906.

\bibitem{steinweiss} \name[Stein, E. M.]\et\name[Weiss, G.]: {\em
Introduction to Fourier analysis on Euclidean spaces.}
Princeton Math. Ser., 32. Princeton Univ. Press, Princeton, N.J. 1971.

\bibitem{TW} \name[Trudinger, N.S.]\et\name[Wang, X.J.]: On the weak continuity of
elliptic operators and applications to potential theory. {\em
Amer.~J.~Math.} 124 (2002), 369--410.

\bibitem{V} \name[V\'azquez, J.L.]: Barenblatt solutions and asymptotic behaviour for a nonlinear fractional heat equation of porous medium type. {\em J. Eur. Math. Soc. (JEMS)} 16 (2014), 769--803.

\bibitem{V2} \name[V\'azquez, J.L.]: Recent progress in the theory of nonlinear diffusion with fractional Laplacian operators. {\em DCDS}, to appear. 





\end{thebibliography}
\end{document}